\definecolor{chianti}{rgb}{0.6,0,0}
\definecolor{meretale}{rgb}{0,0,.6}
\definecolor{leaf}{rgb}{0,.35,0}
\newtheorem{theorem}{Theorem}[section]
\newtheorem{lemma}[theorem]{Lemma}
\newtheorem{corollary}[theorem]{Corollary}
\newtheorem{proposition}[theorem]{Proposition}
\theoremstyle{definition}
\newtheorem{definition}[theorem]{Definition}
\newtheorem{example}[theorem]{Example}
\newtheorem{remark}[theorem]{Remark}
\numberwithin{equation}{theorem}
\def\height{\operatorname{ht}}
\def\cd{\operatorname{cd}}
\def\pf{\operatorname{pf}}
\def\Pf{\operatorname{Pf}}
\def\rad{\operatorname{rad}\,}
\def\rank{\operatorname{rank}}
\def\sgn{\operatorname{sgn}}
\def\tr{{\operatorname{tr}}}
\def\GL{\operatorname{GL}}
\def\SL{\operatorname{SL}}
\def\Sp{\operatorname{Sp}}
\def\O{\operatorname{O}}
\def\SO{\operatorname{SO}}
\def\Hom{\operatorname{Hom}}
\def\fraka{\mathfrak{a}}
\def\frakb{\mathfrak{b}}
\def\frakm{\mathfrak{m}}
\def\frakp{\mathfrak{p}}
\def\frakA{\mathfrak{A}}
\def\frakP{\mathfrak{P}}
\def\frakQ{\mathfrak{Q}}
\def\frakS{\mathfrak{S}}
\def\AA{\mathbb{A}}
\def\CC{\mathbb{C}}
\def\NN{\mathbb{N}}
\def\PP{\mathbb{P}}
\def\QQ{\mathbb{Q}}
\def\VV{\mathbb{V}}
\def\ZZ{\mathbb{Z}}
\def\0{\mathbbm{0}}
\def\1{\mathbbm{1}}
\def\ge{\geqslant}
\def\le{\leqslant}
\def\phi{\varphi}
\def\bar{\overline}
\def\tilde{\widetilde}
\def\to{\longrightarrow}
\def\mapsto{\longmapsto}
\def\comp{\mathrm{c}}
\renewcommand{\mod}{\,\operatorname{mod}\,}
\begin{document}
\title[Natural embeddings of classical invariant rings]{When are the natural embeddings of\\
classical invariant rings pure?}

\author{Melvin Hochster}
\address{Department of Mathematics, East Hall, 530 Church St., Ann Arbor, MI 48109, USA}
\email{hochster@umich.edu}

\author{Jack Jeffries}
\address{Department of Mathematics, University of Nebraska-Lincoln, 203 Avery Hall, Lincoln, NE~68588, USA}
\email{jack.jeffries@unl.edu}

\author{Vaibhav Pandey}
\address{Department of Mathematics, Purdue University, 150 N University St., West Lafayette, IN~47907, USA}
\email{pandey94@purdue.edu}

\author{Anurag K. Singh}
\address{Department of Mathematics, University of Utah, 155 South 1400 East, Salt Lake City, UT~84112, USA}
\email{singh@math.utah.edu}

\thanks{M.H. was supported by NSF grants DMS~1902116 and DMS~2200501, J.J. by NSF CAREER Award DMS~2044833, and V.P. and A.K.S. by NSF grants DMS~1801285 and DMS~2101671.}

\subjclass[2010]{Primary 13D45; Secondary 13A35, 13A50, 13C40, 13F20, 14B15.}

\begin{abstract}
Consider a reductive linear algebraic group $G$ acting linearly on a polynomial ring $S$ over an infinite field; key examples are the general linear group, the symplectic group, the orthogonal group, and the special linear group, with the classical representations as in Weyl's book: for the general linear group, consider a direct sum of copies of the standard representation and copies of the dual; in the other cases take copies of the standard representation. The invariant rings in the respective cases are determinantal rings, rings defined by Pfaffians of alternating matrices, symmetric determinantal rings, and the Pl\"ucker coordinate rings of Grassmannians; these are the classical invariant rings of the title, with $S^G\subseteq S$ being the natural embedding.

Over a field of characteristic zero, a reductive group is linearly reductive, and it follows that the invariant ring $S^G$ is a pure subring of $S$, equivalently, $S^G$ is a direct summand of~$S$ as an $S^G$-module. Over fields of positive characteristic, reductive groups are typically no longer linearly reductive. We determine, in the positive characteristic case, precisely when the inclusion $S^G\subseteq S$ is pure. It turns out that if $S^G\subseteq S$ is pure, then either the invariant ring $S^G$ is regular, or the group $G$ is linearly reductive.
\end{abstract}
\maketitle

\section{Introduction}

The classical invariant rings that we study here are determinantal rings, rings defined by Pfaffians of alternating matrices, symmetric determinantal rings, and the Pl\"ucker coordinate rings of Grassmannians. Over a field of characteristic zero, these are all invariant rings for classical groups as in Weyl~\cite{Weyl}; by~\cite{Igusa, DeConcini-Procesi, Hashimoto}, these are also invariant rings for the corresponding classical groups over an infinite field of positive characteristic. The embedding $S^G\subseteq S$, for $S$ a polynomial ring and $G$ a classical group, is the \emph{natural embedding} of the title; we describe these in turn. In each case, $K$ is a field of arbitrary characteristic. 

\subsection*{(a)}

Let $Y$ and $Z$ be $m\times t$ and $t\times n$ matrices of indeterminates respectively. Set $S$ to be the polynomial ring $K[Y,Z]$, and take $R$ to be the $K$-subalgebra generated by the entries of the product matrix $YZ$. Then $R$ is isomorphic to the determinantal ring $K[X]/I_{t+1}(X)$, where~$X$ is an $m\times n$ matrix of indeterminates, and $I_{t+1}(X)$ is the ideal generated by its size~$t+1$ minors. The general linear group $\GL_t(K)$ acts~$K$-linearly on $S$ via
\[
M\colon\begin{cases} Y & \mapsto YM^{-1}\\ Z & \mapsto MZ\end{cases}
\]
where $M\in\GL_t(K)$. When the field $K$ is infinite, $R$ is precisely the ring of invariants, see~\cite[\S3]{DeConcini-Procesi} or~\cite[Theorem~4.1]{Hashimoto}.

\subsection*{(b)}

Let $Y$ be a $2t\times n$ matrix of indeterminates and set~$S\colonequals K[Y]$. Let
\begin{equation}
\label{equation:omega}
\Omega\colonequals\begin{pmatrix} 0 & \1 \\ -\1 & 0 \end{pmatrix},
\end{equation}
be the size $2t$ standard symplectic block matrix, where $\1$ is the size $t$ identity matrix. The~$K$-algebra $R\colonequals K[Y^\tr \Omega Y]$ is isomorphic to $K[X]/\Pf_{2t+2}(X)$, where~$X$ is an $n\times n$ alternating matrix of indeterminates, and $\Pf_{2t+2}(X)$ the ideal generated by its principal size~$2t+2$ Pfaffians, see \S\ref{sec:alt}. The \emph{symplectic group}
\[
\Sp_{2t}(K)\colonequals\{M\in\GL_{2t}(K)\ |\ M^\tr\Omega M=\Omega\}
\]
acts $K$-linearly on $S$, where
\[
M\colon Y\mapsto MY\qquad\text{ for }\ M\in\Sp_{2t}(K).
\]
It is readily seen that $Y^\tr\Omega Y\mapsto Y^\tr M^\tr\Omega MY=Y^\tr\Omega Y$ for $M\in\Sp_{2t}(K)$, so the entries of the matrix $Y^\tr\Omega Y$ are invariant under the action; when $K$ is infinite, the invariant ring is precisely the ring $R$, see~\cite[\S6]{DeConcini-Procesi} or~\cite[Theorem~5.1]{Hashimoto}.

\subsection*{(c)}

Let $Y$ be a $d\times n$ matrix of indeterminates. Set~$S\colonequals K[Y]$ and let $R$ be the~$K$-subalgebra generated by the entries of $Y^\tr Y$. Then $R$ is isomorphic to $K[X]/I_{d+1}(X)$, for~$X$ an~$n\times n$ symmetric matrix of indeterminates. The \emph{orthogonal group}
\[
\O_d(K)\colonequals\{M\in\GL_d(K)\ |\ M^\tr M=\1\}
\]
acts~$K$-linearly on $S$ via
\[
M\colon Y\mapsto MY\qquad\text{ for }\ M\in\O_d(K).
\]
Note that $Y^\tr Y\mapsto Y^\tr M^\tr MY=Y^\tr Y$ for $M\in\O_d(K)$, so the entries of $Y^\tr Y$ are invariant under the action; when the field $K$ is infinite of characteristic other than two, the invariant ring is precisely the subring~$R$, see~\cite[\S5]{DeConcini-Procesi}; when $K$ is infinite of characteristic two, as proved in \cite[\S5]{Richman}, the invariant ring has the additional generators
\[
y_{1j}+\dots+y_{dj}\qquad\text{ where }1\le j\le n.
\]

\subsection*{(d)}
Let $Y$ be a $d\times n$ matrix of indeterminates over~$K$, where $d\le n$, and set~$S\colonequals K[Y]$. Let
\[
R\colonequals K[\{\Delta\}]
\]
where $\{\Delta\}$ is the set of size $d$ minors of $Y$. Then~$R$ is the Pl\"ucker coordinate ring of the Grassmannian of~$d$-dimensional subspaces of an~$n$-dimensional vector space. The special linear group~$\SL_d(K)$ acts $K$-linearly on $S$ where
\[
M\colon Y\mapsto MY\qquad\text{ for }\ M\in\SL_d(K).
\]
Each size $d$ minor of $Y$ is fixed under the group action; when $K$ is an infinite field, the invariant ring is precisely~$R$, see~\cite{Igusa} or \cite[\S3]{DeConcini-Procesi}.

\medskip

If $K$ has characteristic zero, the groups $\GL_t(K)$, $\Sp_{2t}(K)$, $\O_d(K)$, and $\SL_d(K)$ are linearly reductive; it follows that, in each case, the invariant ring $R$ is a direct summand of~$S$ as an $R$-module, equivalently that $R\subseteq S$ is pure, see~\S\ref{sec:pure:split:solid} for the equivalence. This then implies a wealth of strong properties for $R$, see~\cite{Boutot, HH:JAMS, Hochster-Roberts, Kempf:MMJ}. Over fields of positive characteristic, these invariant rings maintain favorable properties such as the Cohen-Macaulay property and~$F$-regularity, see \cite[Theorem~7.14]{HH:JAG}, though the groups are typically not linearly reductive; indeed, in positive characteristic, each of the classical groups above admits a representation for which the invariant ring is not Cohen-Macaulay \cite{Kohls}. It is natural to ask if the embeddings (a)--(d) are pure when~$K$ has positive characteristic. We prove:

\begin{theorem}
\label{theorem:main}
Let $K$ be a field of characteristic $p>0$. Fix positive integers~$d,m,n$, and~$t$, and let $R\subseteq S$ denote one of the following inclusions:
\begin{enumerate}[\quad \rm(a)]
\item\label{theorem:main:gl} $K[YZ] \subseteq K[Y,Z]$, where $Y$ and $Z$ are $m\times t$ and $t\times n$ matrices of indeterminates;

\item\label{theorem:main:sp} $K[Y^\tr \Omega Y] \subseteq K[Y]$, where $Y$ is a $2t \times n$ matrix of indeterminates;

\item\label{theorem:main:o} $K[Y^\tr Y] \subseteq K[Y]$, where $Y$ is a $d \times n$ matrix of indeterminates;

\item\label{theorem:main:sl} $K[\{ \Delta \} ] \subseteq K[Y]$, where $Y$ is a $d \times n$ matrix of indeterminates with $d\le n$.
\end{enumerate}
Then $R\subseteq S$ is pure if and only if, in the respective cases,
\begin{enumerate}[\quad \rm(a)]
\item $t=1$ or $\min\{m,n\} \le t$;
\item $n\le t+1$;
\item $d=1$; $d=2$ and $p$ is odd; $p=2$ and $n \le (d+1)/2$; or $p$ is odd and $n\le (d+2)/2$;
\item $d=1$ or $d=n$.
\end{enumerate}
\end{theorem}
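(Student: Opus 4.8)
The plan is to separate the argument into two halves: first show that in each of the listed "small" cases the invariant ring $S^G$ is regular (or the group is linearly reductive), so that purity holds essentially for free; and second, show that whenever we are outside the listed cases, purity fails, by exhibiting an explicit element whose membership in an expansion ideal obstructs splitting. For the positive direction, the easy subcases are when $G$ is already linearly reductive (for instance $\GL_1=\SL_1$ is a torus, $\O_1$ and $\O_2$ in odd characteristic are linearly reductive up to issues handled by the $\mu_2$ and torus structure, and $\SL_d$ with $d=n$ acts so that $S^G=K$); in those cases purity is classical via the Reynolds operator. The remaining positive subcases are exactly the ones where the classical description of $S^G$ forces it to be a polynomial ring: $\min\{m,n\}\le t$ makes $K[YZ]$ the full polynomial ring $K[X]$ on an $m\times n$ matrix (the determinantal ideal $I_{t+1}$ is zero); $n\le t+1$ makes the Pfaffian ideal $\Pf_{2t+2}$ zero; the bounds on $n$ versus $d$ in case (c) are exactly the thresholds below which $I_{d+1}(X)=0$ for the $n\times n$ symmetric matrix — but one must be careful since in characteristic $2$ the relevant ring is a quotient by a slightly different ideal, which accounts for the parity split; and $d=1$ trivializes case (d). In each of these, $S^G$ is regular, and then purity of $S^G\subseteq S$ is automatic because a pure subring condition for a regular (indeed polynomial) subring of a polynomial ring can be checked directly, or one invokes that a split already exists since $S$ is free over a polynomial subring after a suitable change of coordinates; I would cite \S\ref{sec:pure:split:solid} for the relevant equivalences and reductions.

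For the negative direction — outside the listed cases — the strategy is to produce, in each family, a witness to non-purity. The cleanest uniform tool is: if $R\subseteq S$ is pure then for every ideal $\fraka\subseteq R$ we have $\fraka S\cap R=\fraka$; so it suffices to find an ideal $\fraka$ and an element $r\in R$ with $r\in\fraka S$ but $r\notin\fraka$. The natural candidates come from the Frobenius: in a non-$F$-split (or non-$F$-pure) situation one often has $x^{p^e}\in\frakm^{[p^e]}S$-type relations that descend. More precisely, I expect to use that these invariant rings, outside the regular cases, are known \emph{not} to be $F$-pure, or fail to be $F$-split, and to leverage a result (presumably proved earlier in the paper in \S\ref{sec:pure:split:solid}) that purity of $S^G\subseteq S$ with $S$ regular forces $S^G$ to be $F$-split, hence $F$-pure; then one only needs to know that determinantal rings, Pfaffian rings, symmetric determinantal rings, and Plücker rings in the stated ranges are \emph{not} $F$-pure — or more sharply, not $F$-split in a way compatible with the embedding. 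The key computation is a local cohomology or Fedder-type criterion: compute the relevant $\frakm$-primary obstruction using the Fedder criterion $f^{p-1}\notin\frakm^{[p]}$ where $R=K[X]/(f)$ in the hypersurface cases (e.g. a single minor or Pfaffian when the codimension is one), and a generalized Fedder / degree argument in higher codimension. The parity conditions on $p$ in case (c) strongly suggest that the obstruction is a $\det$-versus-$\Pf$ phenomenon specific to characteristic $2$, so the characteristic-$2$ computation will need to be done separately, likely tracking that a symmetric matrix in characteristic $2$ has its diagonal entries behaving like squares.

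The main obstacle, I expect, is the negative direction in the "interesting" range of case (c) — the symmetric determinantal rings in characteristic $2$ — and, secondarily, making the obstruction argument uniform across the four families rather than doing four separate Fedder computations. For (a) the cleanest route is probably to reduce to the generic determinantal ring $K[X]/I_{t+1}(X)$ and use that this fails $F$-purity precisely when it is not regular and $\min\{m,n\}>t$, combined with the fact that any split $S^G\hookrightarrow S$ would restrict to a split of $S^G$ off its own Frobenius; I would want a lemma saying purity of $S^G\subseteq S$ implies $F$-purity of $S^G$ (true since $S$ is regular hence $F$-pure, and purity is transitive in the right sense), reducing everything to known $F$-purity computations for these four classical families. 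If such an $F$-purity input is not available in the literature in the exact generality needed, the fallback is direct: exhibit in each family an explicit polynomial identity of the form $g\in(\text{linear forms})S$ that witnesses failure of $\fraka S\cap R=\fraka$, which for determinantal and Pfaffian rings can be built from Plücker-type or Laplace-type straightening relations. The write-up would therefore proceed: (1) recall the purity $\Rightarrow$ $F$-purity reduction and the structure theorems pinning down when $S^G$ is regular; (2) dispatch the positive cases; (3) for the negative cases, invoke or prove failure of $F$-purity for each of the four families in the complementary range, treating characteristic $2$ in case (c) with a separate Fedder-type computation on the symmetric generic matrix; (4) assemble the equivalences to get the sharp "if and only if."
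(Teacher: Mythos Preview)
Your proposal has two fundamental gaps, one in each direction.

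\textbf{Positive direction.} You assert that once $S^G$ is a polynomial ring, purity of $S^G\subseteq S$ is ``automatic,'' e.g.\ because $S$ is free over $S^G$ after a change of coordinates. This is false, and the statement of the theorem itself shows it. In case~(b) the Pfaffian ring $K[X]/\Pf_{2t+2}(X)$ is regular whenever $n\le 2t+1$ (not $n\le t+1$ as you write), yet purity holds only for $n\le t+1$; so there is an entire range $t+2\le n\le 2t+1$ in which $S^G$ is a polynomial ring but the inclusion is \emph{not} pure. Likewise in case~(d), $S^G$ is regular when $d=n-1$, but the embedding is not pure. The same happens in case~(c). A regular subring of a polynomial ring need not be pure: the inclusion $K[yz]\subseteq K[y,z]$ is already a counterexample. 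The paper handles the positive cases by showing that the expansion $\frakm_R S$ has height equal to $\dim R$ (via an explicit analysis of the nullcone, e.g.\ Theorems~\ref{theorem:pfaffian:ci} and~\ref{theorem:symmetric:ci}), and then invoking the equivalence~\eqref{theorem:solid:4}$\Leftrightarrow$\eqref{theorem:solid:2} of Theorem~\ref{theorem:solid}.

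\textbf{Negative direction.} Your plan is to deduce non-purity from failure of $F$-purity of $S^G$, via ``purity of $S^G\subseteq S$ with $S$ regular forces $S^G$ to be $F$-pure.'' That implication is correct, but the conclusion is vacuous here: as noted in the introduction, all four families of invariant rings are $F$-regular, hence $F$-pure, in every characteristic and for all parameter values. So $F$-purity of $S^G$ never fails and cannot serve as the obstruction. The paper's obstruction is instead the vanishing of the local cohomology module $H^{\dim R}_{\frakm_R}(S)$, obtained by proving that the nullcone $S/\frakm_R S$ (or its minimal primes) is Cohen-Macaulay of the wrong dimension and applying the Peskine--Szpiro vanishing Theorem~\ref{theorem:peskine:szpiro} together with Mayer--Vietoris arguments. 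Establishing that Cohen--Macaulayness (Theorems~\ref{theorem:pfaffian:nullcone:main}, \ref{theorem:symmetric:nullcone:char2}, \ref{theorem:symmetric:odd:nullcone}, \ref{theorem:symmetric:even:nullcone}, and the variety-of-complexes input Theorem~\ref{theorem:determinant:minprimes}) is where essentially all the work lies, and your proposal has no substitute for it.
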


Suppose the field $K$ in Theorem~\ref{theorem:main} is infinite; in case~\eqref{theorem:main:o} assume also that the characteristic of $K$ is odd. In this setting, the ring $R$ is the invariant ring $S^G$ for an action of a classical group~$G$ on $S$, as recorded earlier. It is notable that whenever $S^G\subseteq S$ is pure, either the invariant ring $S^G$ is regular, or the group $G$ is linearly reductive:

In~\eqref{theorem:main:gl}, $S^G$ is regular if $\min\{m,n\} \le t$, while if $t=1$, then $\GL_1(K)$ is the torus $K^\times$, which is linearly reductive. For~\eqref{theorem:main:sp}, $S^G$ is regular if $n\le 2t+1$, though $S^G\subseteq S$ is pure in the more restrictive range $n\le t+1$. In case~\eqref{theorem:main:o}, the orthogonal group $\O_d(K)$ is linearly reductive if~$d=1$, and also if $d=2$ and $p$ is odd, as discussed in the proof of Theorem~\ref{theorem:symmetric:purity}; the ring~$S^G$ is regular if $n\le d$, though $S^G\subseteq S$ is pure in a smaller range, and one that depends on the characteristic. Lastly, in~\eqref{theorem:main:sl}, $S^G$ is regular precisely if $d$ equals $1$, $n-1$, or $n$.

The cases~\eqref{theorem:main:gl}--\eqref{theorem:main:sl} of Theorem~\ref{theorem:main} are proven as Theorems~\ref{theorem:determinantal},~\ref{theorem:alternating:purity},~\ref{theorem:symmetric:purity},~and~\ref{theorem:grassmannian:purity}, respectively. In each case, this involves investigating the \emph{nullcone} of the action of $G$ on $S$, namely the ring $S/\frakm_{S^G}S$, where $\frakm_{S^G}$ is the homogeneous maximal ideal of the invariant ring $S^G$ (or, more generally, the ring $S/\frakm_RS$). The study of nullcones goes back at least to Hilbert's proof of the finite generation of invariant rings~\cite{Hilbert}; more recent work includes \cite{Hesselink, Kraft-Schwarz, Kraft-Wallach, Schwarz}. Specifically, Kraft and Schwartz determine, for classical invariant rings of characteristic zero, precisely when the nullcone is reduced or a domain~\cite[Theorem~9.1]{Kraft-Schwarz}. Our paper includes the corresponding results in the positive characteristic case.

The easiest to settle is the $\SL_n(K)$ case: the invariant ring is the homogeneous coordinate ring for the Pl\"ucker embedding of a Grassmannian variety, and the nullcone is a determinantal ring, hence Cohen-Macaulay by Hochster-Eagon~\cite{Hochster-Eagon}; more work is needed in the other cases. For the $\GL_n(K)$ action, the invariant rings are generic determinantal rings, but the nullcone typically fails to be Cohen-Macaulay or even equidimensional; we use the theory of varieties of complexes as introduced by Buchsbaum-Eisenbud~\cite{Buchsbaum-Eisenbud:1975}, and expanded by Kempf~\cite{Kempf:BAMS}, De Concini-Strickland~\cite{DeConcini-Strickland}, and Huneke~\cite{Huneke:TAMS}. We settle the purity question by examining the irreducible components and their intersections.

In the symplectic group~$\Sp_{2n}(K)$ case, the invariant rings are defined by the principal Pfaffians of fixed size of an alternating matrix of indeterminates. It is worth mention that there is much amongst our results that is new even in the case of characteristic zero: for example, for the $\Sp_{2n}(\CC)$ case, Kraft and Schwarz~\cite[Theorem~9.1.3]{Kraft-Schwarz} prove that the nullcone is irreducible and normal; we prove that it is, in addition, Cohen-Macaulay:

\begin{theorem}
\label{theorem:pfaffian:nullcone:intro}
Let $Y$ be a~$2t\times n$ matrix of indeterminates over a field $K$, where $t$ and $n$ are positive integers. Set~$S\colonequals K[Y]$ and take $\frakP$ to be the ideal generated by the entries of the matrix $Y^\tr\Omega Y$, where $\Omega$ is the size $2t$ standard symplectic matrix as displayed in~\eqref{equation:omega}. 

Then $\frakP$ is a prime ideal, and the ring $S/\frakP$ is Cohen-Macaulay.
\end{theorem}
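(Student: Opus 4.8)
The plan is to exhibit $S/\frakP$ as a ring whose Cohen-Macaulayness and primeness can be established by a specialization/deformation argument, degenerating $Y^\tr\Omega Y$ to an object already understood, combined with an explicit analysis of the defining equations. Observe first that the entries of $Y^\tr\Omega Y$ are, up to sign, the $2\times 2$ minors of a $2\times n$ matrix built from paired rows of $Y$: writing $Y=\binom{U}{V}$ with $U,V$ of size $t\times n$, one has $Y^\tr\Omega Y = U^\tr V - V^\tr U$, the alternating matrix whose $(i,j)$ entry is $\sum_{k=1}^t (u_{ki}v_{kj}-u_{kj}v_{ki})$. Thus $\frakP$ is generated by the $2\times 2$ minors of the $2t\times n$ matrix obtained by interleaving, but more usefully by a sum of $t$ ``generic rank-two alternating'' pieces. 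The first step is therefore to identify $\frakP$ with the ideal cutting out the variety of $n$-tuples of vectors in a $2t$-dimensional symplectic space that are pairwise orthogonal under the symplectic form — i.e. the ``isotropic'' nullcone — and to record that its expected dimension is $\dim S - \binom{n}{2}$ when $n\le 2t$ (and one must also handle $n>2t$, where the rank of the form forces more relations, though generically $\binom n2$ independent ones persist until the Pfaffian constraints kick in).

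The second step is the primeness of $\frakP$. Here I would argue that $S/\frakP$ is the coordinate ring of the space of $K$-algebra maps realizing $n$ pairwise-$\Omega$-orthogonal rows, and that this variety is irreducible: it fibers over the flag-type variety of isotropic partial frames, with the fibers being affine spaces or cones over homogeneous spaces for the symplectic group, all of which are irreducible. Alternatively — and this is probably cleaner — one can use an incidence correspondence with a Grassmannian of isotropic subspaces and a generic-smoothness or dimension count to see that $V(\frakP)$ is irreducible of the expected codimension; combined with a Jacobian computation showing it is generically reduced, this yields that $\frakP$ is prime. (The generic-reducedness is a local statement at a single well-chosen point where the rows can be taken to be the first few standard symplectic basis vectors, and the Jacobian of the generators $u_{ki}v_{kj}-u_{kj}v_{ki}$ is easily seen to have full rank there.)

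The third step — Cohen-Macaulayness — is the main obstacle, and the one the paper flags as new even over $\CC$. The strategy I would pursue is a principal-radical-system or Gröbner-degeneration argument: choose a term order (a diagonal-type order adapted to the pairing of rows) so that the initial ideal $\operatorname{in}(\frakP)$ is a squarefree monomial ideal, and show that the associated simplicial complex is shellable or, better, Cohen-Macaulay over every field; then $\operatorname{in}(\frakP)$, hence $\frakP$, defines a Cohen-Macaulay ring, and flatness of the degeneration transfers the property. The combinatorics should mirror that of the ladder-determinantal or Pfaffian complexes already in the literature (Herzog–Trung, De Concini–Strickland), since $Y^\tr\Omega Y$ is a structured specialization of a generic alternating matrix. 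Failing a clean monomial degeneration, the fallback is a principal radical system in the spirit of Hochster–Eagon and Kutz: set up an induction on $t$ and $n$ by adjoining or inverting a corner variable, show that the relevant ideals in the system are radical, and deduce the Cohen-Macaulay property along the induction together with the perfection of the defining ideal. I expect the bookkeeping in either approach — verifying that the degenerate combinatorial object is Cohen-Macaulay in all characteristics, or that every ideal in the radical system is reduced — to be where essentially all the work lies; primeness and the dimension count are comparatively routine by contrast.
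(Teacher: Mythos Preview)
Your ``fallback'' approach---a principal radical system in the style of Hochster--Eagon---is exactly what the paper does, and your instinct that this is where the real work lies is correct. The Gr\"obner route you propose first is speculative: you have not exhibited a term order under which the initial ideal of $\frakP$ is squarefree, and the paper gives no indication that one exists (indeed, for the closely related symmetric nullcone the paper explicitly remarks that no squarefree initial ideal exists). So the proposal as written does not constitute a proof; it is a plan whose primary branch is uncertain and whose secondary branch is undeveloped.

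What the paper supplies, and what your outline is missing, is the concrete architecture of the radical system and the key lemma that makes the induction run. The system is indexed not just by $t$ and $n$ but by sequences $\sigma=(s_0,\dots,s_m)$ controlling rank conditions on initial column blocks, together with ideals $J_a$ and $J'_a$ of entries from specified rows; the ideal $\frakP$ itself is one member of this family. The inductive step requires knowing that a particular maximal minor $\Delta$ of $Y$ is a nonzerodivisor modulo the relevant ideal---this is the paper's Lemma~\ref{lemma:pfaffian:nzd}, and its proof is not routine: it uses the action of a copy of $\Sp_{2t-2}(K)$ on the set of minimal primes, together with Pl\"ucker-type relations among minors, to rule out $\Delta$ lying in any minimal prime. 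Once $\Delta$ is inverted, a change of coordinates reduces the quotient to something visibly prime, and the principal radical system machinery (Lemma~\ref{lemma:prs}) pushes primality and radicality down through the family. Your incidence-correspondence sketch for irreducibility is in the right spirit but does not by itself give that $\frakP$ is \emph{radical}; generic reducedness plus irreducibility is not enough without either Cohen--Macaulayness (circular here) or the explicit radical-system argument. The Cohen--Macaulay property is then established by a parallel induction using Lemma~\ref{lemma:CM} and careful dimension counts.
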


The situation is more complicated in the case of the orthogonal group $\O_d(K)$; the characteristic zero case of parts~\eqref{theorem:symmetric:nullcone:intro:a} and~\eqref{theorem:symmetric:nullcone:intro:b} of the following is~\cite[Theorem~9.1.4]{Kraft-Schwarz}:

\begin{theorem}
\label{theorem:symmetric:nullcone:intro}
Let $Y$ be a~$d\times n$ matrix of indeterminates over a field $K$, where $d$ and $n$ are positive integers. Set~$S\colonequals K[Y]$ and take $\frakA$ to be the ideal generated by the entries of $Y^\tr Y$.
\begin{enumerate}[\quad \rm(1)]
\item Suppose $K$ has characteristic other than $2$. Then:
\begin{enumerate}[\hspace{-.1in}\rm(a)]
\item\label{theorem:symmetric:nullcone:intro:a} The ideal $\frakA$ is radical if and only if $2n\le d$.
\item\label{theorem:symmetric:nullcone:intro:b} If~$K$ contains a primitive fourth root of unity, then $\frakA$ is prime if and only if $2n<d$.
\item If $d$ is odd, or if $2n<d$, then $S/\rad\frakA$ is a Cohen-Macaulay integral domain.
\item Suppose $d$ is even, $2n\ge d$, and $K$ contains a primitive fourth root of unity. Then~$\frakA$ has minimal primes $\frakP$ and $\frakQ$, see Definition~\ref{definition:p:q}, and the rings~$K[Y]/\frakP$ and~$K[Y]/\frakQ$ are Cohen-Macaulay.
\end{enumerate}
\item Suppose $K$ has characteristic two. Then $\frakA$ is not radical; however, $S/\rad\frakA$ is a Cohen-Macaulay integral domain.
\end{enumerate}
\end{theorem}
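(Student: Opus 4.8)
Throughout, $V(\frakA)$ denotes the affine variety of $d\times n$ matrices $Y$ whose columns span a totally isotropic subspace for the standard symmetric form on $K^{d}$, and I would organize the entire proof around this picture. Over $\bar K$---and already over $K$ itself once $K$ contains a square root of $-1$, so that the standard form is split---a generic point of $V(\frakA)$ is a matrix of rank $r\colonequals\min\{n,\lfloor d/2\rfloor\}$ whose column space is a generic isotropic $r$-subspace. Since the orthogonal Grassmannian of isotropic $r$-subspaces is irreducible when $r<d/2$ but has exactly two (spinor) connected components when $d$ is even and $r=d/2$, this predicts the component structure: $V(\frakA)$ is irreducible unless $d$ is even and $2n\ge d$, in which case it has two components, cut out by the primes $\frakP$ and $\frakQ$ of Definition~\ref{definition:p:q}. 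This yields the qualitative content of (a), (b) and (d), once one also knows when $\frakA$ itself is reduced.

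In characteristic different from two, I would settle radicality numerically. Computing $\dim V(\frakA)$ from the Grassmannian description gives, when $2n\le d$, the value $dn-\binom{n+1}{2}$, so the codimension of $V(\frakA)$ equals the number $\binom{n+1}{2}$ of distinct entries of $Y^\tr Y$; hence $\frakA$ is a complete intersection, in particular unmixed and without embedded primes, and it remains only to check that it is generically reduced. That amounts to showing the differential $H\mapsto Y_0^{\tr}H+H^{\tr}Y_0$ of $Y\mapsto Y^\tr Y$ is surjective onto the symmetric $n\times n$ matrices at a generic rank-$n$ isotropic matrix $Y_0$---a short computation in which $\operatorname{char}K\ne2$ is essential, since in characteristic two that differential lands in the subspace of symmetric matrices with vanishing diagonal, of codimension $n$. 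Conversely, when $2n>d$ every matrix in $V(\frakA)$ has rank at most $\lfloor d/2\rfloor$, so the size-$(\lfloor d/2\rfloor+1)$ minors of $Y$ lie in $\rad\frakA$; as such a minor involves monomials mixing distinct rows of $Y$ that cannot occur in the degree-two generators of $\frakA$, a comparison of graded pieces shows it is not in $\frakA$. This gives (a), and combined with the component count it gives (b), the borderline $2n=d$ being precisely the ``radical but not prime'' case.

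The Cohen-Macaulay assertions are the real content. When $2n<d$ there is nothing further to do: $\frakA$ is a generically reduced complete intersection, hence reduced and Cohen-Macaulay, and it is a domain because $V(\frakA)$ is irreducible. For the remaining cases in characteristic not two, I would pass to a hyperbolic basis, writing $d=2k$ and $Y=\binom{U}{V}$ with $U,V$ generic $k\times n$ matrices, which turns $\frakA$ into the ideal generated by the entries of $U^\tr V+V^\tr U$ (with an extra summand $w^\tr w$ when $d=2k+1$), or equivalently---after the substitution $P=U+V$, $Q=U-V$---the ideal of the entries of $P^\tr P-Q^\tr Q$. Thus $V(\frakA)$ becomes the locus on which the $n\times n$ matrix $U^\tr V$, necessarily of rank at most $k$, is forced to be alternating, and I would deduce the Cohen-Macaulayness of its components---hence of $S/\rad\frakA$ and of $K[Y]/\frakP$, $K[Y]/\frakQ$---from that of Pfaffian and symmetric determinantal rings, which are classically known to be Cohen-Macaulay; the two spinor families of the maximal orthogonal Grassmannian correspond to $\frakP$ and $\frakQ$. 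In characteristic two the picture is genuinely different: each diagonal entry $(Y^\tr Y)_{ii}=\bigl(\sum_{a}y_{ai}\bigr)^2$ is the square of a linear form $\ell_i$, so $\frakA$ is never radical, while $\rad\frakA=\frakA+(\ell_1,\dots,\ell_n)$. Eliminating one row of $Y$ against the relations $\ell_i=0$ converts the off-diagonal entries of $Y^\tr Y$ into the entries of $(Y')^{\tr}\Sigma\,Y'$ for a $(d-1)\times(d-1)$ alternating matrix $\Sigma$ of maximal rank; a symplectic change of row coordinates then identifies $S/\rad\frakA$ with a polynomial extension of a symplectic nullcone, which is a Cohen-Macaulay domain by Theorem~\ref{theorem:pfaffian:nullcone:intro}.

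The hard part is the Cohen-Macaulay property outside the complete-intersection range---for $\rad\frakA$ when $d$ is odd and $2n>d$, and for $\frakP,\frakQ$ when $d$ is even and $2n\ge d$. These ideals are not generic determinantal ideals, and making the reduction to determinantal and Pfaffian rings rigorous---pinning down the exact defining ring after the hyperbolic change of basis, controlling the extra relations that appear (a ``variety of complexes'' phenomenon of the kind already met in the $\GL$ case), and carrying it out over $K$ rather than its closure---is where the work sits; a workable alternative is a single induction on $n$ by the method of principal radical systems that establishes radicality, primality, and Cohen-Macaulayness at once. I would also expect the borderline $2n=d$ to need separate handling, for instance by using the exact sequence $0\to S/\rad\frakA\to S/\frakP\oplus S/\frakQ\to S/(\frakP+\frakQ)\to 0$ to reduce the Cohen-Macaulayness of the two spinor components to that of the lower isotropic rank locus $V(\frakP+\frakQ)$.
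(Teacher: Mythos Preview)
Your characteristic two argument is exactly the paper's: eliminate one row against the linear forms~$\ell_i$, recognize the resulting bilinear form as alternating of maximal even rank, and invoke Theorem~\ref{theorem:pfaffian:nullcone:intro}. Likewise, your treatment of the range $2n\le d$ via ``complete intersection plus generic smoothness'' is a clean alternative to the paper's approach; the paper instead deduces radicality in this range as a byproduct of the principal radical system built for the full theorem, but your Jacobian computation (which genuinely uses $\operatorname{char}K\neq2$) is correct and more direct for that piece. One small point: your ``comparison of graded pieces'' for the only-if direction of~(a) needs care, since the generators $\sum_a y_{ak}y_{al}$ are \emph{not} homogeneous for the row grading you seem to invoke; a cleaner route is to specialize the rows not used in a chosen size-$(t+1)$ minor to zero and observe that the generators then lie in a proper linear subspace of the degree-two forms.

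The real gap is in your proposed route to Cohen--Macaulayness outside the complete-intersection range. After your hyperbolic change of basis the defining condition becomes ``$U^\tr V$ is alternating,'' with $U,V$ generic $k\times n$ matrices; but this is \emph{not} a Pfaffian ideal in the sense of $\Pf_{2t+2}(X)$ for an alternating matrix of indeterminates, nor a symmetric determinantal ideal---the entries of $U^\tr V$ are themselves bilinear in two sets of variables, and the locus carries the additional rank constraint $\operatorname{rank}(U^\tr V)\le k$. So the Cohen--Macaulayness of the classical determinantal and Pfaffian rings does not apply, and no transparent deformation or flat degeneration to those rings is available. (Your remark that this is ``a variety-of-complexes phenomenon'' is apt in spirit, but the relevant varieties here are not among those covered by \cite{DeConcini-Strickland, Huneke:TAMS}.)

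The paper follows precisely the alternative you name at the end: it builds a full principal radical system. Concretely, for each ``standard'' sequence $\sigma=(0=s_0<s_1<\dots<s_m=n)$ with $m\le t$ it introduces ideals $I_\sigma$, $I'_\sigma$, $I''_\sigma$ obtained by adding to $\frakS$ (respectively to $\frakP$, $\frakQ$) the minors $I_{k+1}(Y|_{s_k})$, together with ideals $J_a$ of first-row entries; it proves irreducibility of the corresponding varieties by parametrizing via flags of isotropic subspaces (your orthogonal-Grassmannian picture, made precise in Proposition~\ref{proposition:symmetric:irreducible}), and then runs Lemma~\ref{lemma:prs} and Lemma~\ref{lemma:CM} downward through the family. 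The three families are interlaced in the induction, since $K[Y_{2t\times n}]/(\frakP+J_n)\cong K[Y_{(2t-1)\times n}]/\frakS$, which is what forces one to prove the odd and even cases simultaneously. Your final remark about the exact sequence $0\to S/(\frakP\cap\frakQ)\to S/\frakP\oplus S/\frakQ\to S/(\frakP+\frakQ)\to 0$ is also on target: the paper uses exactly this (Lemma~\ref{lemma:CM}) at several steps of the induction, not only at the borderline.
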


Theorem~\ref{theorem:pfaffian:nullcone:intro} is part of Theorem~\ref{theorem:pfaffian:nullcone:main}, while Theorem~\ref{theorem:symmetric:nullcone:intro} is covered by Theorems~\ref{theorem:symmetric:nullcone:char2}, \ref{theorem:symmetric:odd:nullcone}, and~\ref{theorem:symmetric:even:nullcone}. It is worth emphasizing that, in all cases
~\eqref{theorem:main:gl}--\eqref{theorem:main:sl} of Theorem~\ref{theorem:main}, the minimal primes of $\frakm_{S^G}S$---the defining ideal of the nullcone---are \emph{perfect ideals}, i.e., define Cohen-Macaulay rings; this supports the maxim \lq\lq Perfection is often hunted for and usually found in generic situations,\rq\rq\ Bruns~\cite{Bruns:compositio}. A key technique used to establish the perfection is that of principal radical systems, introduced by Hochster-Eagon in their study of determinantal rings~\cite{Hochster-Eagon}; this in reviewed in~\S\ref{sec:prs}.

Theorem~\ref{theorem:symmetric:nullcone:intro} is related to work on Lov\'{a}sz-Saks-Schrijver ideals. Given a simple graph~$G$ on a vertex set $\{1,\dots,n\}$, an integer $d$, and a field $K$, let $Y$ be an $n\times d$ matrix of indeterminates over $K$. The Lov\'asz-Saks-Schrijver ideal $L^K_G(d)$ is the ideal of $K[Y]$ generated by the entries of $YY^\tr$ in the positions $(i,j)$ that are edges of~$G$. In \cite{HMSW} and \cite{CW}, the conditions that the ideal $L^K_G(d)$ is radical, prime, or a complete intersection are related to various conditions on $G$ and $d$. Notably, the restriction to simple graphs ensures that the ideals $L^K_G(d)$ are generated by elements whose initial terms are squarefree, allowing for Gr\"obner degeneration techniques; it is easy to see that the ideal $\frakA$ from Theorem~\ref{theorem:symmetric:nullcone:intro} has no squarefree initial ideal.

Let $V$ be a commutative ring, and let $R$ denote either a Pfaffian ring $V[X]/\Pf_{2t+2}(X)$, or a determinantal or symmetric determinantal ring $V[X]/I_{t+1}(X)$. While Theorem~\ref{theorem:main} addresses the purity of the natural embedding $R\subseteq S$ when $V$ is a field of positive characteristic, it remains unresolved whether $R$ is a pure subring of \emph{some} polynomial ring over~$V$. However, when $V$ is the ring of integers or the ring of $p$-adic integers, the following theorem addresses embeddings in arbitrary polynomial rings over~$V$:

\begin{theorem}[{\cite[Theorem~9.1]{JS}}]
Let $V$ denote either the ring of integers $\ZZ$, or a ring of~$p$-adic integers $\widehat{\ZZ_{(p)}}$. Let $d,m,n$, and~$t$ be positive integers.
\begin{enumerate}[\quad \rm(a)]
\item Let $R\colonequals V[X]/I_{t+1}(X)$, where $X$ is an $m\times n$ matrix of indeterminates. Then $R$ is a pure subring of a polynomial ring over $V$ if and only $t=1$ or $\min\{m,n\}\le t$.

\item Let $R\colonequals V[X]/\Pf_{2t+2}(X)$, where $X$ is an $n\times n$ alternating matrix of indeterminates. Then $R$ is a pure subring of a polynomial ring over $V$ if and only if $n\le 2t+1$, i.e., if and only if $R$ is itself a polynomial ring over $V$.

\item Let $R\colonequals V[X]/I_{d+1}(X)$, where $X$ is a symmetric $n\times n$ matrix of indeterminates. Then~$R$ is a pure subring of a polynomial ring over $V$ if and only $n\le d$, or $d=1$, or~$d=2$ and $V=\widehat{\ZZ_{(p)}}$ for $p$ an odd prime.
\end{enumerate}
\end{theorem}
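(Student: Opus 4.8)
The proof separates into a positive half---exhibiting, for each admissible choice of parameters, a polynomial ring over $V$ into which $R$ embeds as a direct summand---and a negative half---showing that for all other parameters $R$ is a pure subring of \emph{no} polynomial ring over $V$, hence of no polynomial ring over the residue field $\FF_p$. Two reductions are used throughout. First, purity of ring maps is stable under arbitrary base change of the coefficient ring: if $R\to S'$ is pure over $V$ then $R\otimes_V V'\to S'\otimes_V V'$ is pure for every $V$-algebra $V'$; taking $V'=V/pV$ shows that purity over $V$ forces purity of the reduction modulo each maximal ideal of $V$ (all primes $p$ when $V=\ZZ$, the single prime $p$ when $V=\widehat{\ZZ_{(p)}}$), and taking $V'=\bar{\FF_p}$ lets us pass to an infinite field. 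Second, the generic fibre $R\otimes_V\operatorname{Frac}(V)$ lives in characteristic zero, where the relevant classical group is linearly reductive and imposes no constraint; the theorem is really about the interaction between this characteristic-zero splitting and the positive-characteristic fibres.

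For the positive half: when $\min\{m,n\}\le t$ in~(a), $n\le 2t+1$ in~(b), or $n\le d$ in~(c), the relevant ideal of minors or Pfaffians is zero, so $R$ is a polynomial ring over $V$ and is pure in itself. When $t=1$ in~(a), the isomorphism $R\cong V[x_iy_j]$ realizes $R$ inside $V[x_1,\dots,x_m,y_1,\dots,y_n]$ with $V$-module complement the span of the monomials of unbalanced bidegree, which is an $R$-submodule, so the inclusion splits; the case $d=1$ in~(c) is the same with $V[x_ix_j]\subseteq V[x_1,\dots,x_n]$ and the parity of total degree. Both of these are characteristic-free. The last admissible case, $d=2$ in~(c) with $V=\widehat{\ZZ_{(p)}}$ and $p$ odd, is the only one needing group theory: since $2$ is a unit in $V$, the group scheme $\O_2$ over $V$ is linearly reductive---$\mu_2$ and the constant group $\ZZ/2\ZZ$ are linearly reductive and $\SO_2$ is a rank-one torus---so the Reynolds operator gives an $R$-linear retraction $V[Y]\to V[Y]^{\O_2}$, and $V[Y]^{\O_2}=V[Y^\tr Y]=R$ because both sides are $V$-flat subrings of $V[Y]$ agreeing after inverting $p$.

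For the negative half, after reducing modulo $p$ it suffices to show: if $R$ is a classical invariant ring as in~(a)--(c) over an infinite field of characteristic $p$, with parameters outside the list above, then $R$ is a pure subring of no polynomial ring. The natural tool is local cohomology: $R$ is Cohen-Macaulay (Hochster-Eagon, and the analogous results for symmetric determinantal and Pfaffian rings), so $H^{D}_{\frakm_R}(R)\ne 0$ for $D\colonequals\dim R$; were $R$ pure in a polynomial ring $P$, this module would inject into $H^{D}_{\frakm_R P}(P)$, forcing the cohomological dimension $\cd(\frakm_R P,\,P)\ge D$. Since $P/\frakm_R P=P\otimes_R (R/\frakm_R)$ is a specialization of the nullcone of the natural action---whose components, dimensions, and intersection pattern are exactly what is computed in the body of the paper, through Theorems~\ref{theorem:pfaffian:nullcone:intro} and~\ref{theorem:symmetric:nullcone:intro} among others---one bounds $\cd$ from above by a connectedness argument and contradicts the displayed inequality precisely at the excluded thresholds: $\min\{m,n\}>t\ge 2$, $n\ge 2t+2$, and either $d\ge 3$ with $n>d$, or $d=2$ with $p=2$ and $n\ge 3$. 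A useful preliminary is that a pure subring of a regular ring is normal, and in characteristic $p$ is $F$-pure and a splinter; this eliminates some borderline parameters at once.

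The principal obstacle is making the cohomological-dimension argument uniform over \emph{every} polynomial overring $P$, not merely the natural embedding $S^G\subseteq S$. This is exactly where the admissible list of the present theorem exceeds that of Theorem~\ref{theorem:main}: a regular $R$ is pure in itself even though its natural embedding may fail to be pure, so one cannot simply quote Theorem~\ref{theorem:main}. I expect the resolution to rest on an intrinsic upper bound for $\cd(\frakm_R P,\,P)$---governed by the ``degree of connectedness'' of the components of the nullcone, an invariant of $R$ independent of the chosen $P$---followed by the characteristic-sensitive bookkeeping, carried out one family at a time: for $V=\ZZ$ the obstruction already present at the prime $2$ is what forbids $d=2$ in~(c), whereas for $V=\widehat{\ZZ_{(p)}}$ with $p$ odd the prime $2$ never enters, which is precisely why $d=2$ remains admissible.
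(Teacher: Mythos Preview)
The statement you are attempting to prove is not proved in this paper: it is quoted from~\cite{JS}, and the only argument supplied here is the remark immediately following the statement, explaining why the formulation in terms of pure subrings agrees with the direct-summand formulation of~\cite{JS} (via the equivalence of conditions~\eqref{theorem:solid:1} and~\eqref{theorem:solid:2} in Theorem~\ref{theorem:solid}, extended to the case where $R_0=S_0$ is a complete local ring, using~\cite[Theorem~3.6.17]{Bruns-Herzog}). So there is no ``paper's own proof'' to compare against beyond this reduction.

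Your positive half is essentially correct and self-contained. The negative half, however, has a genuine gap that you yourself flag but do not close. The assertion that ``$P/\frakm_R P$ is a specialization of the nullcone of the natural action'' is unjustified and, as stated, false: for an \emph{arbitrary} polynomial ring $P$ containing $R$, the fibre $P/\frakm_R P$ bears no a priori relation to the nullcone $S/\frakm_R S$ of the natural embedding $R\subseteq S$, so the structural results of this paper on nullcones---Theorems~\ref{theorem:pfaffian:nullcone:intro}, \ref{theorem:symmetric:nullcone:intro}, \ref{theorem:determinant:minprimes}---give no handle on $\cd(\frakm_R P,\,P)$. Your proposed remedy, an ``intrinsic'' connectedness-type bound on $\cd(\frakm_R P,\,P)$ depending only on $R$, is exactly what is missing, and you give no indication of how to prove it. Note also that your ``useful preliminary'' eliminates nothing: these classical invariant rings are already normal and $F$-regular (hence $F$-pure and splinters) in every characteristic, so those properties do not distinguish the admissible from the inadmissible parameters.

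The approach actually taken in~\cite{JS} is of a different nature, as its title indicates: the obstruction is via differential operators. If $R$ were a direct summand of a polynomial ring $P$ over $V$, then---because differential operators on $P$ are well understood and lift through the smooth morphism $V\to P$---differential operators on $R/pR$ would lift to $R$. One then exhibits, for each excluded parameter range, a differential operator on $R/pR$ that provably does not lift. This obstruction is intrinsic to $R$ and makes no reference to any particular polynomial overring, which is precisely the feature your approach lacks.
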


The formulation of the theorem in \cite{JS} is in terms of direct summands rather than pure subrings, but the notions are equivalent when $V$ above is a ring of~$p$-adic integers, from which the remaining assertions follow. Specifically, conditions~\eqref{theorem:solid:1} and~\eqref{theorem:solid:2} in Theorem~\ref{theorem:solid} remain equivalent when $R_0=S_0$ is, more generally, a complete local ring. The proof in this case uses~\cite[Theorem~3.6.17]{Bruns-Herzog}.

\subsection*{Notation:}

For commutative rings $R\subseteq S$, and $M$ a matrix with entries from $S$, we use~$R[M]$ to denote the $R$-algebra generated by the entries of $M$, and $(M)$ or $(M)S$ to denote the ideal of $S$ generated by the entries of $M$. For a product matrix $MN$ one has~$(MN)\subseteq(M)$, so if $N$ is invertible then $(MN)=(M)$.

We use $\1$ for the identity matrix, or $\1_n$ if the size needs to be specified. For a matrix~$M$, we use $M|_s$ to denote the submatrix consisting of the first $s$ columns of $M$; this should not be confused with the notation $M_{\alpha|\beta}$---used only in \S\ref{ssec:sym:odd}---for the submatrix with rows indexed by $\alpha$ and columns indexed by $\beta$.

\section{Pure, split, and solid extensions}
\label{sec:pure:split:solid}

A ring homomorphism $R\to S$ is \emph{pure} if $R\otimes_RM \to S\otimes_RM$ is injective for each~$R$-module $M$. It is readily seen that if $R$ is a direct summand of $S$ as an $R$-module, i.e., the inclusion $R\to S$ is split in the category of $R$-modules, then $R\to S$ is pure. 

A related notion is that of a solid algebra: Let $R$ be an integral domain; following~\cite{Hochster:solid1}, an $R$-algebra $S$ is \emph{solid} if $\Hom_R(S,R)$ is nonzero. If $R$ is a direct summand of $S$ as an $R$-module, it follows that $S$ is a solid $R$-algebra. More generally, we have:

\begin{theorem}[{cf.~\cite[Corollary~2.4]{Hochster:solid1}}]
\label{theorem:solid}
Let $R\to S$ be a degree-preserving inclusion of~$\NN$-graded normal rings that are finitely generated over a field $R_0=S_0$. Set $\frakm_R$ to be the homogeneous maximal ideal of~$R$, and set $d\colonequals\dim R$. Let $E_R$ denote the injective hull of~$R/\frakm_R$ in the category of graded $R$-modules. Consider the following statements:
\begin{enumerate}[\quad \rm(1)]
\item\label{theorem:solid:1} The ring $R$ is a direct summand of $S$ as an $R$-module.
\item\label{theorem:solid:2} The map $R\to S$ is pure.
\item\label{theorem:solid:3} The induced map $R\otimes_RE_R\to S\otimes_RE_R$ is injective.
\item\label{theorem:solid:4} The local cohomology module $H^d_{\frakm_R}(S)$ is nonzero.
\item\label{theorem:solid:5} The $R$-algebra $S$ is solid.
\end{enumerate}

Then~\eqref{theorem:solid:1},~\eqref{theorem:solid:2}, and~\eqref{theorem:solid:3} are equivalent, and imply the equivalent conditions~\eqref{theorem:solid:4} and~\eqref{theorem:solid:5}. If~$R$ is a polynomial ring over a field of positive characteristic, then~\eqref{theorem:solid:1}--\eqref{theorem:solid:5} are equivalent.
\end{theorem}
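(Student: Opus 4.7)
The plan is to traverse the cycle $(1)\Rightarrow(2)\Rightarrow(3)\Rightarrow(1)$ using graded Matlis duality, verify $(1)\Rightarrow(4)$ and $(1)\Rightarrow(5)$ directly, establish $(4)\Leftrightarrow(5)$ as the graded version of Hochster's criterion for solidity \cite[Corollary~2.4]{Hochster:solid1}, and finally handle the polynomial characteristic-$p$ case $(4)\Rightarrow(3)$ via Frobenius.

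The implications $(1)\Rightarrow(2)\Rightarrow(3)$ are formal: split injections remain injections after tensoring with any $M$, and $(3)$ specializes $(2)$ to $M=E_R$. The step $(3)\Rightarrow(1)$ uses graded Matlis duality: apply the exact contravariant functor $\Hom_R(-,E_R)$ to the injection $E_R\hookrightarrow S\otimes_R E_R$, and combine with the Hom--tensor adjunction and the identification $\Hom_R(E_R,E_R)\cong R$ (graded Matlis duality for a finitely generated $\NN$-graded $K$-algebra $R$ with $K=R_0$) to obtain a surjection $\Hom_R(S,R)\twoheadrightarrow R$; a preimage of $1$ is an $R$-linear retraction $S\to R$. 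Then $(1)\Rightarrow(5)$ is immediate from the retraction, and $(1)\Rightarrow(4)$ follows by functoriality of $H^d_{\frakm_R}(-)$ together with Grothendieck's nonvanishing $H^d_{\frakm_R}(R)\neq 0$. The equivalence $(4)\Leftrightarrow(5)$ is the graded Hochster criterion, which for our graded normal $R$ follows from the approximate Gorensteinness of excellent normal domains, converting nonvanishing of $H^d_{\frakm_R}(S)$ into the existence of a nonzero element of $\Hom_R(S,R)$ by Matlis duality against a cofinal family of Gorenstein quotients.

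For the polynomial characteristic-$p$ case, $R$ is Gorenstein, so $H^d_{\frakm_R}(R)\cong E_R$ up to a degree shift and $(4)$ reads $S\otimes_R E_R\neq 0$. Using the \v{C}ech description
\[
H^d_{\frakm_R}(S)\cong S[1/(x_1\cdots x_d)]\Big/\sum_i S[1/(x_1\cdots\widehat{x_i}\cdots x_d)],
\]
the socle generator is $\eta=[1/(x_1\cdots x_d)]$. I would show $\eta\neq 0$ in $H^d_{\frakm_R}(S)$: if $\eta=0$, then applying the Frobenius of $S$ repeatedly forces $[1/(x_1^{p^e}\cdots x_d^{p^e})]=0$ for every $e$, and rewriting any \v{C}ech representative $[s/(x_1^{a_1}\cdots x_d^{a_d})]$ over a sufficiently large common Frobenius denominator shows it would vanish as well, contradicting $(4)$. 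Thus the map $E_R\to S\otimes_R E_R$ is nonzero on the socle, and by the essentiality of the socle in the injective hull $E_R$ this map is injective---i.e., $(3)$ holds.

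The delicate step is this last Frobenius argument: without positive characteristic the passage from nonvanishing of the target to injectivity of the map has no formal justification, and the characteristic-$p$ hypothesis together with the Gorenstein identification $H^d_{\frakm_R}(R)\cong E_R$ is genuinely used there. The other places requiring care---graded Matlis duality for finitely generated graded rings without completion, and the passage from approximate Gorensteinness to the solidity criterion---are standard once set up correctly, and none of these obstacles is substantial beyond bookkeeping of graded shifts.
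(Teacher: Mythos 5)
Your proposal is correct and follows essentially the same route as the paper: the formal chain $(1)\Rightarrow(2)\Rightarrow(3)$, graded Matlis duality for $(3)\Rightarrow(1)$, deferral of $(4)\Leftrightarrow(5)$ to the graded version of Hochster's solidity criterion, and the Frobenius argument on the socle class $[1/(x_1\cdots x_d)]$ for $(4)\Rightarrow(3)$ in the polynomial characteristic-$p$ case. The only cosmetic difference is that you derive $(4)$ and $(5)$ from the splitting in $(1)$, whereas the paper derives $(4)$ from purity via the right-exactness of $H^d_{\frakm_R}(-)$; both are valid.
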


Since it is an issue that will come up often, we take this opportunity to clarify a point regarding~\eqref{theorem:solid:4}: as $S$ is an $R$-module, so is the local cohomology $H^d_{\frakm_R}(S)$; this is the same~$R$-module as considering the $S$-module $H^d_{\frakm_R S}(S)$ and restricting scalars.

\begin{proof}
The implications \eqref{theorem:solid:1} $\implies$ \eqref{theorem:solid:2} $\implies$ \eqref{theorem:solid:3} are clear; for \eqref{theorem:solid:3} $\implies$ \eqref{theorem:solid:1}, applying the graded dual~$\Hom_R(-,E_R)$ yields the surjection
\[
\CD
\Hom_R(S\otimes_RE_R,E_R) @>>> \Hom_R(E_R,E_R)\\
@| @|\\
\Hom_R(S,R) @>>> R,
\endCD
\]
where the bottom map is simply $\phi\mapsto\phi(1)$.

The equivalence of \eqref{theorem:solid:4} and \eqref{theorem:solid:5} is the graded version of~\cite[Corollary~2.4]{Hochster:solid1}; the proof there is readily modified using instead a homogeneous Noether normalization, and duality in the graded setting.

For \eqref{theorem:solid:2} $\implies$ \eqref{theorem:solid:4}, note that the induced map
\begin{equation}
\label{equation:solid}
H^d_{\frakm_R}(R)\ =\ R\otimes_R H^d_{\frakm_R}(R)\ \to\ S\otimes_R H^d_{\frakm_R}(R)\ =\ H^d_{\frakm_R}(S)
\end{equation}
is injective, where the second equality holds by the right exactness of $H^d_{\frakm_R}(-)$.

Lastly, suppose $R$ is the polynomial ring $K[x_1,\dots,x_d]$ where $K$ is a field of positive characteristic $p$, and that \eqref{theorem:solid:4} holds. The local cohomology module $H^d_{\frakm_R}(R)$ agrees with $E_R$ up to a grading shift, so to show that \eqref{theorem:solid:3} holds, it suffices to verify that the map~\eqref{equation:solid} is injective. Computing $H^d_{\frakm_R}(R)$ using a \v Cech complex on $x_1,\dots,x_d$, its socle is spanned by the cohomology class
\[
\eta\colonequals\left[\frac{1}{x_1 \cdots x_d}\right],
\]
so one need only verify that the image of $\eta$ in $H^d_{\frakm_R}(S)$ is nonzero. Indeed, if this image were zero, then applying the Frobenius map iteratively, the elements
\[
\left[\frac{1}{x_1^{p^e} \cdots x_d^{p^e}}\right]\ \in\ H^d_{\frakm_R}(S)
\]
would be zero for each integer $e\ge 1$. But these generate $H^d_{\frakm_R}(S)$ as an $S$-module.
\end{proof}

The equivalence of the conditions in Theorem~\ref{theorem:solid} may fail when $R$ is a polynomial ring over a field of characteristic zero:

\begin{example}
Set $R$ to be the polynomial ring $\QQ[x_1,x_2,x_3]$, and $S$ to be the hypersurface
\[
\QQ[x_1,x_2,x_3,y_1,y_2,y_3]/\big((x_1x_2x_3)^2-\sum_{i=1}^3y_ix_i^3\big).
\]
Consider the grading with $\deg x_i=1$ and $\deg y_i=3$ for each $i$. A difficult computation of Roberts~\cite{Roberts:lc} shows that $H^3_{(x_1,x_2,x_3)}(S)$ is nonzero, i.e., that the inclusion $R\to S$ satisfies condition~\eqref{theorem:solid:4} in Theorem~\ref{theorem:solid}. However, it does not satisfy~\eqref{theorem:solid:1}, since $(x_1x_2x_3)^2$ is an element of the ideal $(x_1^3,\ x_2^3,\ x_3^3)S$ though not of $(x_1^3,\ x_2^3,\ x_3^3)R$.
\end{example}

Even when $R\to S$ is an inclusion of polynomial rings over a field $K$, the purity may be quite subtle, for example it may depend on the characteristic of $K$. Let $Y$ be a $2\times 3$ matrix of indeterminates over a field~$K$, and set~$S\colonequals K[Y]$. Let $R$ be the $K$-algebra generated by the size $2$ minors of~$Y$. Since the minors are algebraically independent over $K$ in this case, the ring $R$ is a polynomial ring. The inclusion $R\to S$ is pure precisely when $K$ has characteristic zero; this is a special case of the result of the next section, a key ingredient being the vanishing theorem of Peskine-Szpiro, recorded below in the graded setting:

\begin{theorem}[{\cite[Proposition~III.4.1]{PS}}]
\label{theorem:peskine:szpiro}
Let $S$ be a polynomial ring over a field of positive characteristic. If $\fraka$ is a homogeneous ideal such that $S/\fraka$ is Cohen-Macaulay, then
\[
H^k_\fraka(S)=0 \qquad\text{for each }\ k\neq\height\fraka.
\]
\end{theorem}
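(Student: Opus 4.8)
The plan is to deduce the vanishing from a uniform bound on the projective dimensions of the Frobenius powers of $\fraka$, the crucial input being that the Frobenius endomorphism is flat on the regular ring $S$.

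Write $c\colonequals\height\fraka$. Since $S$ is regular and $S/\fraka$ is Cohen-Macaulay, $\fraka$ is a perfect ideal of height $c$, so $\operatorname{pd}_S(S/\fraka)=\operatorname{grade}(\fraka,S)=c$. The vanishing $H^k_\fraka(S)=0$ for $k<c$ is then automatic and characteristic-free: $c$ equals $\operatorname{grade}(\fraka,S)$, which is the least index in which local cohomology supported at $\fraka$ can be nonzero. So the substance of the theorem lies in the vanishing for $k>c$, and that is where positive characteristic enters.

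For the upper range I would compute local cohomology via Frobenius powers: writing $\fraka^{[p^e]}$ for the ideal generated by the $p^e$-th powers of a generating set of $\fraka$, one has $H^k_\fraka(S)=\varinjlim_e\Ext^k_S\!\big(S/\fraka^{[p^e]},S\big)$, since the family $\{\fraka^{[p^e]}\}_e$ is cofinal with $\{\fraka^n\}_n$ under reverse inclusion (both having radical $\rad\fraka$). It therefore suffices to show $\operatorname{pd}_S\!\big(S/\fraka^{[p^e]}\big)\le c$ for every $e$, for then $\Ext^k_S(S/\fraka^{[p^e]},S)=0$ for all $k>c$ and all $e$, and the direct limit vanishes. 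To bound the projective dimension, fix a finite free resolution $G_\bullet\to S/\fraka$ of length $c$. By Kunz's theorem the $e$-th iterated Frobenius $F^e\colon S\to S$ is flat, $S$ being regular, so the associated base-change functor is exact and sends $G_\bullet$ to a finite free, still acyclic, complex; as this functor takes $S/\fraka$ to $S/\fraka^{[p^e]}$, the image complex is a free resolution of $S/\fraka^{[p^e]}$ of length $\le c$, giving $\operatorname{pd}_S(S/\fraka^{[p^e]})\le c$. All of this may be performed in the category of graded modules, $\fraka$ being homogeneous, which yields the graded statement.

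The essential point, rather than a genuine obstacle, is the flatness of Frobenius: it is exactly what keeps $\operatorname{pd}_S(S/\fraka^{[p^e]})$ bounded by $\operatorname{pd}_S(S/\fraka)$, whereas $\operatorname{pd}_S(S/\fraka^n)$ for ordinary powers is typically unbounded. In accordance with this, the conclusion genuinely fails in characteristic zero --- for $\fraka$ the ideal of size $2$ minors of a generic $2\times 3$ matrix, $H^3_\fraka(S)\neq 0$ although $S/\fraka$ is Cohen-Macaulay of height $2$, as noted earlier in the excerpt.
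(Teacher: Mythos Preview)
Your argument is correct and is essentially the classical Peskine--Szpiro proof: bound $\operatorname{pd}_S(S/\fraka^{[p^e]})$ uniformly by $c$ via Kunz's flatness of Frobenius, then compute $H^k_\fraka(S)$ as a direct limit of $\Ext^k_S(S/\fraka^{[p^e]},S)$. The paper does not supply its own proof of this theorem; it simply records the statement with a citation to \cite[Proposition~III.4.1]{PS}, so there is nothing to compare against beyond noting that your approach matches the original source.

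One small expository remark: your closing example is accurate (the size~$2$ minors of a generic $2\times 3$ matrix give $H^3_\fraka(S)\neq 0$ in characteristic zero), but the paper's surrounding discussion of the $2\times 3$ case concerns purity of $R\subseteq S$ rather than this cohomological nonvanishing directly, so ``as noted earlier in the excerpt'' slightly overstates what is written there.
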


\section{Pl\"ucker embeddings of Grassmannians}
\label{sec:grassmannian}

The first case of Theorem~\ref{theorem:main} that we address is~\eqref{theorem:main:sl}, namely the case of the special linear group; this ends up being the easiest by far, the nullcones here being the well-studied determinantal rings.

Fix integers $1\le d\le n$. Let $Y$ be a $d\times n$ matrix of indeterminates over a field~$K$, and set~$S\colonequals K[Y]$. Let $R$ denote the $K$-algebra generated by the size $d$ minors of~$Y$. Then~$R$ is the homogeneous coordinate ring, under the Pl\"ucker embedding, of the Grassmannian~$G(d,n)$ of~$d$-dimensional subspaces of an $n$-dimensional vector space. The ring~$R$ is regular when~$d$ equals~$1$, $n-1$, or $n$; in other cases, the relations between the size $d$ minors are quadratic---these are the Pl\"ucker relations,~\cite[Chapter~VII,~\S6]{HP}. The ring $R$ is a Gorenstein unique factorization domain,~\cite{Hochster:schubert, Laksov, Musili}, of dimension~$d(n-d)+1$.

Consider the $K$-linear action of the special linear group~$\SL_d(K)$ on $S$, where
\[
M\colon Y\mapsto MY\qquad\text{ for }\ M\in\SL_d(K).
\]
It is readily seen that the size $d$ minors of~$Y$ are fixed by the group action; when the field~$K$ is infinite, the invariant ring is precisely the subring~$R$, see~\cite{Igusa} or \cite[\S3]{DeConcini-Procesi}. If $K$ is a field of characteristic zero, then the group $\SL_d(K)$ is linearly reductive, and it follows that the invariant ring $R$ is a direct summand of $S$ as an $R$-module. In particular, the inclusion~$R\subseteq S$ is pure when $K$ has characteristic zero. In the case of positive characteristic, we have:

\begin{theorem}
\label{theorem:grassmannian:purity}
Let $K$ be a field of positive characteristic. Let $Y$ be a $d\times n$ matrix of indeterminates where $1\le d\le n$, and set~$S\colonequals K[Y]$. Let $R$ be the $K$-algebra generated by the size $d$ minors of~$Y$. Then the inclusion $R\subseteq S$ is pure if and only if $d=1$ or $d=n$.
\end{theorem}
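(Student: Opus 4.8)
The plan is to study the \emph{nullcone} $S/\frakm_R S$, which in this case turns out to be a classical determinantal ring, and then to feed the Peskine--Szpiro vanishing theorem into the local cohomology criterion of Theorem~\ref{theorem:solid}. Write $\frakm_R$ for the homogeneous maximal ideal of $R$. Since $R$ is generated over $K$ by the size $d$ minors of $Y$, which are homogeneous of positive degree, we have $\frakm_R S = I_d(Y)$, the ideal of $S$ generated by the size $d$ minors of the $d\times n$ matrix $Y$; thus the nullcone is $S/I_d(Y)$. By Hochster--Eagon~\cite{Hochster-Eagon} this ring is Cohen--Macaulay, and $I_d(Y)$ has height $n-d+1$ in $S$. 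Since local cohomology depends only on the radical of the defining ideal, $H^k_{\frakm_R}(S)=H^k_{I_d(Y)}(S)$ for every $k$, so Theorem~\ref{theorem:peskine:szpiro} gives $H^k_{\frakm_R}(S)=0$ for all $k\ne n-d+1$.

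For the ``only if'' direction, suppose $R\subseteq S$ is pure. By Theorem~\ref{theorem:solid} this forces $H^{\dim R}_{\frakm_R}(S)\ne 0$, so the vanishing just established requires $\dim R=n-d+1$. As $\dim R=d(n-d)+1$, this reads $d(n-d)+1=n-d+1$, i.e.\ $(d-1)(n-d)=0$; hence $d=1$ or $d=n$.

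For the ``if'' direction: if $d=1$ then $R=S$ and purity is trivial. If $d=n$, then $Y$ is a square matrix and $R=K[\det Y]$ is a polynomial ring in one variable; since $\det Y$ is a nonzerodivisor on the domain $S$ but is not a unit, $S$ is torsion-free over the principal ideal domain $R$, hence flat, and $\frakm_R S=(\det Y)\ne S$, so $R\to S$ is faithfully flat, hence pure. (Equivalently, via the nullcone: $S/(\det Y)$ is a Cohen--Macaulay hypersurface of height $1=\dim R$, so $H^{\dim R}_{\frakm_R}(S)=H^1_{(\det Y)}(S)\ne 0$, and Theorem~\ref{theorem:solid} applies because $R$ is a polynomial ring over a field of positive characteristic.)

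The only step with any real content is the identification of the nullcone with a Cohen--Macaulay determinantal ring, which makes Peskine--Szpiro applicable with all the local cohomology of $S$ concentrated in a single cohomological degree; after that the argument is just a dimension count, and I would not expect a genuine obstacle. This is precisely why the special linear case is ``the easiest by far'': in cases~\eqref{theorem:main:gl}--\eqref{theorem:main:o} of Theorem~\ref{theorem:main} the relevant nullcones need not even be equidimensional, and one must instead locate their irreducible components, understand the intersections, and prove perfection directly.
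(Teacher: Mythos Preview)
Your proof is correct and follows essentially the same route as the paper: identify $\frakm_R S = I_d(Y)$, invoke Hochster--Eagon for the Cohen--Macaulay property and height, apply Peskine--Szpiro to concentrate the local cohomology in a single degree, and compare with $\dim R$ via Theorem~\ref{theorem:solid}. The only cosmetic difference is that for the converse at $d=n$ you lead with a direct faithful-flatness argument, whereas the paper treats $d=1$ and $d=n$ uniformly via the local cohomology criterion (your parenthetical alternative).
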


\begin{proof}
Set $\frakm_R$ to be the homogeneous maximal ideal of $R$. Since the ring $R$ has dimension~$d(n-d)+1$, if the inclusion $R\subseteq S$ is pure, then $H^{d(n-d)+1}_{\frakm_R}(S)$ must be nonzero by Theorem~\ref{theorem:solid}. But~$\frakm_{R}S$ equals the determinantal ideal $I_d(Y)$, which has height $n-d+1$, and defines a Cohen-Macaulay ring $K[Y]/I_d(Y)$, see~\cite{Eagon-Northcott} or \cite{Hochster-Eagon}. But then Theorem~\ref{theorem:peskine:szpiro} implies that
\[
d(n-d)+1\ =\ n-d+1,
\]
i.e., $d=1$ or $d=n$.

Conversely, if $d=1$ or $d=n$, then $R$ is a polynomial ring and $\height(\frakm_{R}S)=\dim R$, so the module $H^{d(n-d)+1}_{\frakm_R}(S)$ is nonzero; hence the inclusion $R\subseteq S$ is pure by Theorem~\ref{theorem:solid}.
\end{proof}

Note that when $d=n-1$ in Theorem~\ref{theorem:grassmannian:purity}, the ring $R$ is regular but $R\subseteq S$ is not pure. The argument above serves as the template for the other cases of Theorem~\ref{theorem:main}, namely we proceed by studying the expansion of the homogeneous maximal ideal $\frakm_R$ of the subring $R$ to the ambient polynomial ring $S$, and analyze the local cohomology obstruction~$H^{\dim R}_{\frakm_R}(S)$. In the remaining cases, the ideal $\frakm_RS$ may be more subtle: in the case of determinantal rings treated next, the ideal $\frakm_RS$ is typically not equidimensional.

\section{Generic determinantal rings}
\label{sec:determinantal}

Let $K$ be a field, and let $Y$ and $Z$ be $m\times t$ and $t\times n$ matrices of indeterminates respectively. Set $S\colonequals K[Y,Z]$, and take $R$ to be the $K$-subalgebra of $S$ generated by the entries of the product matrix $YZ$. Then $R$ is isomorphic to the determinantal ring $K[X]/I_{t+1}(X)$, where~$X$ is an $m\times n$ matrix of indeterminates, and $I_{t+1}(X)$ is the ideal generated by its size~$t+1$ minors. The ring $R$ is Cohen-Macaulay by~\cite{Hochster-Eagon}; it is regular precisely if~$\min\{m,n\} \le t$ since this corresponds to $I_{t+1}(X)=0$. Outside of the regular case, $R$ has dimension $mt+nt-t^2$, class group $\ZZ$ by Bruns~\cite{Bruns:cl}, and is Gorenstein precisely if $m$ equals $n$ by Svanes~\cite{Svanes}. 

The general linear group $\GL_t(K)$ acts~$K$-linearly on $S$ via
\[
M\colon\begin{cases} Y & \mapsto YM^{-1}\\ Z & \mapsto MZ.\end{cases}
\]
where $M\in\GL_t(K)$. When $K$ is infinite, the ring $R$ is precisely the ring of invariants for this action, see~\cite[\S3]{DeConcini-Procesi} or~\cite[Theorem~4.1]{Hashimoto}. If, moreover, the field $K$ has characteristic zero, then $\GL_t(K)$ is linearly reductive, so the ring extension $R\to S$ is pure.

\subsection{Irreducible components of the nullcone}

A complex of $K$-vector spaces
\[
\CD
K^{b_0} @<M_1<< K^{b_1} @<M_2<< \cdots @<M_h<< K^{b_h}
\endCD
\]
can be regarded as a point in affine space using the entries of the matrices $M_k$. Setting $r_k$ to be the rank of $M_k$, the matrices satisfy the rank conditions $r_1\le b_0$, and $r_h\le b_h$, and
\[
r_k + r_{k+1}\le b_k \qquad\text{for }\ 1\le k\le h-1.
\]
Given sequences $(b_0,\dots,b_h)$ and $(r_1,\dots,r_h)$ satisfying these rank conditions, consider matrices of indeterminates $X_k$ of size $b_{k-1}\times b_k$ for $1\le k\le h$. The corresponding \emph{variety of complexes} is the algebraic set defined by the vanishing of the entries of the matrices~$X_kX_{k+1}$ and the determinantal ideals $I_{r_k+1}(X_k)$. When $K$ has characteristic zero, these varieties were shown to be Cohen-Macaulay and normal, with rational singularities, by Kempf~\cite{Kempf:BAMS} using~\cite{Kempf:Invent}. The Cohen-Macaulay property is proved in arbitrary characteristic by Huneke~\cite[Theorem 6.2]{Huneke:TAMS} using principal radical systems, and by De~Concini-Strickland~\cite[Theorem 2.7]{DeConcini-Strickland} using Hodge algebra methods; however, as pointed out by Tchernev~\cite[Example~9.2]{Tchernev}, the Hodge algebra structure of~\cite{DeConcini-Strickland} is not correct, though the assertions can be obtained instead using Gr\"obner bases as in~\cite{Tchernev}. See also~\cite{Musili:Seshadri} and the discussion in the proof of~\cite[Theorem~8.6]{CW}. The normality is~\cite[Theorem~7.1]{Huneke:TAMS}.

Returning to our setting where $Y$ and $Z$ are $m\times t$ and $t\times n$ matrices of indeterminates, and $S=K[Y,Z]$, one has $h=2$ and the complex at hand is
\[
\CD
S^m @<Y<< S^t @<Z<< S^n.
\endCD
\]
The papers above give:

\begin{theorem}[{\cite{DeConcini-Strickland, Huneke:TAMS, Kempf:BAMS, Musili:Seshadri, Tchernev}}]
\label{theorem:determinant:minprimes}
Let $K$ be a field. Fix positive integers $m,n$, and $t$, and set~$S\colonequals K[Y,Z]$ where~$Y$ and $Z$ are, respectively, $m\times t$ and~$t\times n$ matrices of indeterminates. For nonnegative integers $i,j$ with $i+j\le t$, set
\[
\frakp_{i,j}\colonequals I_{i+1}(Y) + I_{j+1}(Z) + (YZ)S,
\]
where $(YZ)S$ is the ideal generated by the entries of the matrix $YZ$. Then:
\begin{enumerate}[\quad \rm(1)]
\item For each $i,j$, the ring $S/\frakp_{i,j}$ is a Cohen-Macaulay normal domain.
\item If $i\le m$ and $j\le n$, then $\height(\frakp_{i,j}) = (m-i)(t-i) + (n-j)(t-j) + ij$.
\item The radical of $(YZ)S$ is the intersection of the prime ideals $\frakp_{i,j}$ with $i+j=t$.
\end{enumerate}
\end{theorem}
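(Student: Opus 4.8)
The plan is to recognize $\frakp_{i,j}$ as the defining ideal of a variety of complexes, import the Cohen--Macaulay and normality statements from the literature, and supply the remaining combinatorics by hand. Throughout I would first normalize so that $i\le\min\{m,t\}$ and $j\le\min\{n,t\}$: if $i\ge\min\{m,t\}$ then $I_{i+1}(Y)=0$ and $\frakp_{i,j}=\frakp_{\min\{m,t\},\,j}$, and symmetrically in $j$; together with the hypothesis $i+j\le t$ this is exactly the constraint $i\le m$, $j\le n$ appearing in part~(2).

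For part~(1): with this normalization the data $(b_0,b_1,b_2)=(m,t,n)$ and $(r_1,r_2)=(i,j)$ meet the rank conditions recalled above precisely when $i+j\le t$, and $\frakp_{i,j}=I_{i+1}(Y)+I_{j+1}(Z)+(YZ)S$ is exactly the ideal cutting out the associated variety of complexes, i.e.\ the complex $S^m\leftarrow S^t\leftarrow S^n$ with maps given by $Y$ and $Z$ and ranks $\le i$, $\le j$. The theorems of Huneke~\cite[Theorems~6.2 and~7.1]{Huneke:TAMS} and of De~Concini--Strickland~\cite[Theorems~2.7 and~2.11]{DeConcini-Strickland}---and, in characteristic zero, Kempf~\cite{Kempf:BAMS}---then assert, over any field, that $\frakp_{i,j}$ is prime and that $S/\frakp_{i,j}$ is a Cohen--Macaulay normal domain. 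This is the genuine obstacle: the Cohen--Macaulayness and normality of varieties of complexes are deep results (proved via principal radical systems in~\cite{Huneke:TAMS} and via Hodge algebras in~\cite{DeConcini-Strickland}), and there is no way around invoking them; the only thing to check is the translation of conventions, namely that the ideal generated by the entries of $X_kX_{k+1}$ together with $I_{r_k+1}(X_k)$, in the two-term case, is our $\frakp_{i,j}$.

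For part~(2): since $S/\frakp_{i,j}$ is a domain, its dimension equals $\dim V(\frakp_{i,j})$, which I would compute via a generic point. Forgetting $Z$ maps $V(\frakp_{i,j})$ onto the rank-$\le i$ determinantal variety of $m\times t$ matrices, whose rank-exactly-$i$ locus is irreducible of dimension $i(m+t-i)$; over such a $Y$, the equation $YZ=0$ forces $Z$ to factor through the $(t-i)$-dimensional space $\ker Y$, so the fibre is the set of rank-$\le j$ maps $K^n\to K^{t-i}$, irreducible of dimension $j\big((t-i)+n-j\big)$. Hence $V(\frakp_{i,j})$ is irreducible of dimension $i(m+t-i)+j\big((t-i)+n-j\big)$, and subtracting from $\dim S=mt+nt$ yields $\height\frakp_{i,j}=(m-i)(t-i)+(n-j)(t-j)+ij$ when $i\le m$ and $j\le n$; this dimension is also recorded in the references above, and the computation reproves the irreducibility used in~(1).

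For part~(3): set-theoretically $V((YZ)S)$ is the space of all complexes $S^m\leftarrow S^t\leftarrow S^n$, hence the union of the $V(\frakp_{i,j})$ over admissible pairs with $i+j\le t$, since $YZ=0$ forces $\rank Y+\rank Z\le t$ automatically. From $I_{k+1}(\cdot)\subseteq I_k(\cdot)$ one gets $\frakp_{i',j'}\subseteq\frakp_{i,j}$, so $V(\frakp_{i,j})\subseteq V(\frakp_{i',j'})$, whenever $i\le i'$ and $j\le j'$; conversely such an inclusion of irreducible varieties forces $i\le i'$ and $j\le j'$ by reading ranks at generic points. Thus the irreducible components of $V((YZ)S)$ are the maximal $V(\frakp_{i,j})$, and in the range $\min\{m,n\}>t$---the only case in which $R$ is not already regular, hence the only case relevant to Theorem~\ref{theorem:main}---every admissible pair enlarges coordinatewise to one with $i+j=t$, and distinct such pairs are incomparable. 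Since each $\frakp_{i,j}$ is prime by~(1), $\rad\big((YZ)S\big)=\bigcap_{i+j=t}\frakp_{i,j}$ is an irredundant intersection of primes, so the minimal primes of $(YZ)S$ are precisely the $\frakp_{i,j}$ with $i+j=t$.
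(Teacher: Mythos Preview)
Your proposal is correct and matches the paper's approach: the paper does not prove this theorem at all but simply cites it from \cite{DeConcini-Strickland, Huneke:TAMS, Kempf:BAMS}, and you have correctly identified $\frakp_{i,j}$ as the defining ideal of a variety of complexes in the two-step case and imported the Cohen--Macaulay and normality results from those references. Your explicit dimension count in~(2) and the set-theoretic argument for~(3) are more than the paper provides; the only caveat is that your treatment of~(3) is complete only under the restriction $\min\{m,n\}>t$, which you rightly observe is the sole case used in Theorem~\ref{theorem:determinantal}.
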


It is perhaps amusing to note that varieties of complexes with $h=1$ give us determinantal rings, their Cohen-Macaulay property being used in the $\SL_d(K)$ case of Theorem~\ref{theorem:main}.

\subsection{The purity of the embedding}

We next settle the $\GL_t(K)$ case of Theorem~\ref{theorem:main}:

\begin{theorem}
\label{theorem:determinantal}
Let $K$ be a field of positive characteristic. Fix positive integers $m,n,t$, and consider the inclusion $\phi\colon K[YZ]\to K[Y,Z]$ where $Y$ and $Z$ are, respectively, $m\times t$ and~$t\times n$ matrices of indeterminates. Then $\phi$ is pure if and only if $t=1$, or $m\le t$, or $n\le t$.
\end{theorem}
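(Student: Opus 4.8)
The plan is to follow the template established in the proof of Theorem~\ref{theorem:grassmannian:purity}: study the expansion $\frakm_R S$ of the homogeneous maximal ideal of $R = K[YZ]$ into $S = K[Y,Z]$, and analyze the local cohomology obstruction $H^{\dim R}_{\frakm_R}(S)$ afforded by Theorem~\ref{theorem:solid}. Here $\frakm_R S = (YZ)S$, whose minimal primes are the ideals $\frakp_{i,j}$ with $i+j = t$, by Theorem~\ref{theorem:determinant:minprimes}; when $\min\{m,n\} \le t$ the ring $R$ is regular and $\phi$ is visibly split, and when $t=1$ the group $\GL_1(K) = K^\times$ is a torus, hence linearly reductive, so $\phi$ is pure; thus we may assume $t \ge 2$ and $\min\{m,n\} \ge t+1$, and must show $\phi$ is not pure, i.e.\ that $H^{D}_{\frakm_R}(S) = 0$ where $D \colonequals \dim R = mt + nt - t^2$.

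First I would record the heights of the relevant primes. Among the minimal primes $\frakp_{i,j}$ of $(YZ)S$ with $i+j=t$, Theorem~\ref{theorem:determinant:minprimes}(2) gives $\height \frakp_{i,j} = (m-i)(t-i) + (n-j)(t-j) + ij$; substituting $j = t-i$ one computes this to be $(m-i)(t-i) + (n-t+i)i + i(t-i)$, a function of $i$ that I expect to be minimized at one of the endpoints $i=0$ or $i=t$, giving the extreme components $\frakp_{0,t} = I_{t+1}(Z) + (YZ)S$ and $\frakp_{t,0} = I_{t+1}(Y) + (YZ)S$ of respective heights $n - t$ and $m - t$ (using $\min\{m,n\} \ge t+1$ so these determinantal ideals are nonzero), or more precisely the codimension of the nullcone is $\min\{m,n\} - t < D$, confirming that the nullcone is not equidimensional and is far from being a complete intersection of the expected dimension.

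The crux is a vanishing statement: $H^D_{\frakm_R}(S) = 0$, equivalently $H^D_{(YZ)S}(S) = 0$, where $D = mt+nt-t^2$ exceeds the cohomological dimension we can hope to detect. I would approach this through the cohomological dimension / arithmetic rank of $(YZ)S$, using a Mayer--Vietoris or spectral-sequence argument built on the irreducible decomposition of the nullcone supplied by Theorem~\ref{theorem:determinant:minprimes}, together with the fact that each $S/\frakp_{i,j}$ is Cohen-Macaulay so that Theorem~\ref{theorem:peskine:szpiro} pins down the local cohomology of the individual components: $H^k_{\frakp_{i,j}}(S)$ vanishes except in degree $\height \frakp_{i,j}$. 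Running Mayer--Vietoris over the components and their pairwise intersections (which are again of the form $I_i(Y) + I_j(Z) + (YZ)S$, hence Cohen-Macaulay with heights controlled by Theorem~\ref{theorem:determinant:minprimes}), one bounds $\cd((YZ)S, S)$ by the maximum height appearing among components and intersections, plus a correction for the length of the Mayer--Vietoris complex; the claim is that this bound is strictly less than $D$ whenever $t \ge 2$ and $\min\{m,n\} \ge t+1$. Equivalently, $H^D_{\frakm_R}(S) = 0$, so by Theorem~\ref{theorem:solid} the map $\phi$ fails to be pure.

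The main obstacle I anticipate is the Mayer--Vietoris bookkeeping: the nullcone has $t+1$ components, their higher intersections $\frakp_{i,j} \cap \frakp_{i',j'} \cap \cdots$ must be identified (I expect intersections to correspond to taking the larger determinantal ideals on each side, i.e.\ $\bigcap \frakp_{i_\ell, j_\ell} = I_{\min i_\ell + 1}(Y)$\,$+$\,$I_{\min j_\ell+1}(Z) + (YZ)S$, though checking this requires the primary decomposition results on sums of determinantal ideals and the normality in Theorem~\ref{theorem:determinant:minprimes}), and one must verify that \emph{every} term contributing to $H^D$ in the Mayer--Vietoris spectral sequence vanishes --- this needs the heights of all these intersection ideals to stay comfortably below $D$, which should follow from the formula in Theorem~\ref{theorem:determinant:minprimes}(2) applied to the intersection ideals, but the inequality must be checked uniformly in $i,j,m,n,t$. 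A cleaner alternative, which I would pursue in parallel, is to bound $\ara((YZ)S)$ directly: the ideal $(YZ)S$ is generated by the $mn$ entries of $YZ$, but a much shorter generating set up to radical may exist (the entries of $YZ$ satisfy the obvious determinantal dependencies, so $\ara$ should be close to $\min\{m,n\}-t$ rather than $mn$), and since $\cd \le \ara$, this would immediately give $H^D_{(YZ)S}(S) = 0$ once $\ara < D$; establishing the requisite upper bound on $\ara$ is then the real content.
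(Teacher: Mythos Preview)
Your overall strategy---Mayer--Vietoris on the minimal primes $\frakp_{i,t-i}$ of $(YZ)S$, combined with the Cohen--Macaulayness of each $S/\frakp_{i,j}$ and the Peskine--Szpiro vanishing---is indeed what the paper does. But several of your specific claims are wrong, and you are missing a key simplification.

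First, your description of the extreme primes is incorrect: $\frakp_{0,t} = I_1(Y) + I_{t+1}(Z) + (YZ)S = (Y)$, which has height $mt$, not $n-t$; likewise $\frakp_{t,0} = (Z)$ has height $nt$. More seriously, you have the sum/intersection formula backwards: it is the \emph{sum} that satisfies $\frakp_{i_1,j_1} + \frakp_{i_2,j_2} = \frakp_{\min\{i_1,i_2\},\,\min\{j_1,j_2\}}$, not the intersection. This matters because in the Mayer--Vietoris inequality $\cd(\fraka\cap\frakb)\le\max\{\cd\fraka,\cd\frakb,\cd(\fraka+\frakb)-1\}$, it is the sum $\fraka+\frakb$ that must be controlled, and the paper's inductive argument works precisely because the sum of a partial intersection $\frakp_{0,t}\cap\cdots\cap\frakp_{k,t-k}$ with the next prime $\frakp_{k+1,t-k-1}$ has radical equal to the single prime $\frakp_{k,t-k-1}$. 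The intersections themselves are not of the form $\frakp_{i,j}$ and need not be analyzed directly.

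Second, you are missing a crucial reduction: the paper first shows that purity for $(m,n,t)$ implies purity for any $(m',n',t)$ with $m'\le m$, $n'\le n$, by a degree-zero retraction argument. This allows one to reduce the non-purity claim to the single case $m=n=t+1$, where $R$ is a hypersurface of dimension $t^2+2t$, and the Mayer--Vietoris bound becomes the clean inequality $\cd((YZ)S)\le t^2+t+1 < t^2+2t$. Without this reduction, the bookkeeping you worry about is genuinely unpleasant. Third, your treatment of the purity direction is too quick: when $m\le t$ the ring $R$ is regular, but ``visibly split'' is not a proof---the paper establishes purity by specializing $Y$ to the identity matrix and checking that the relevant local cohomology class survives. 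Finally, the arithmetic-rank alternative will not give $\ara((YZ)S)$ anywhere near $\min\{m,n\}-t$; that route is a dead end here.
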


\begin{proof}
We claim that if the inclusion $\phi\colon K[YZ]\to K[Y,Z]$ is pure for a fixed triple of positive integers $(m,n,t)$, then purity holds as well for the inclusion of the $K$-algebras corresponding to a triple $(m',n',t)$ with $m'\le m$ and $n'\le n$.

To see this, set $Y'$ to be the matrix consisting of the first $m'$ rows of~$Y$, and $Z'$ to be the matrix consisting of the first $n'$ columns of $Z$, and consider the $\NN$-grading on~$K[Y,Z]$ where the indeterminates from the submatrices $Y'$ and $Z'$ have degree $0$, as does $K$, while the remaining indeterminates have degree~$1$, so that ${K[Y,Z]}_0\ =\ K[Y',Z']$. Then
\[
{K[YZ]}_0\ =\ K[Y'Z'],
\]
so $K[Y'Z']$ is a pure subring of $K[YZ]$. Since we are assuming $K[YZ]\to K[Y,Z]$ is pure, it follows that the composition
\[
K[Y'Z']\ \subseteq\ K[YZ]\ \subseteq\ K[Y,Z]
\]
is pure as well, but then so is $K[Y'Z']\subseteq K[Y',Z']$. This proves the claim; similar reduction arguments will be used for other matrix families later in the paper.

Set $S\colonequals K[Y,Z]$ and $R\colonequals K[YZ]$. We next prove that $\phi$ is pure in the cases claimed in the theorem. When $t=1$, the ring $R$ coincides with the Segre product of the polynomial rings~$K[Y]$ and $K[Z]$, which is a pure subring of $S$. For the case $m\le t$, in light of the reduction step, it suffices to establish the purity when~$m=t$ and $n\ge t$. In this case the ring~$R$ has dimension $mn$, specifically the matrix entries
\[
x_{ij}\colonequals {(YZ)}_{ij}
\]
are algebraically independent over $K$, and hence form a homogeneous system of parameters for~$R$. By Theorem~\ref{theorem:solid}, it suffices to show that $H^{mn}_{\frakm_R}(S)$ is nonzero; we show that
\[
\left[\frac{1}{\prod x_{ij}}\right]\ \in\ H^{mn}_{\frakm_R}(S)
\]
is a nonzero element, equivalently that for each $k\ge 1$, one has
\[
(\prod x_{ij})^{k-1}\ \notin\ (x_{11}^k,\dots,x_{mn}^k)S.
\]
It is enough to show the above after specializing the entries of $Y$ to the~$t\times t$ identity matrix. This specialization maps $YZ$ to $Z$, with the image of $S$ being the polynomial ring $K[Z]$. The above display then takes the form
\[
(\prod z_{ij})^{k-1}\ \notin\ (z_{11}^k,\dots,z_{mn}^k)K[Z],
\]
which is immediately seen to hold. The case $n\le t$ is much the same.

Next, suppose $t\ge2$. It remains to prove that $\phi\colon K[YZ]\to K[Y,Z]$ is not pure if $m>t$ and $n>t$. By the reduction step at the beginning of the proof, it suffices to show that $\phi$ is not pure in the case $m=t+1=n$. In this case, the ring $R=K[YZ]$ is a hypersurface of dimension $t^2+2t$, so it suffices by Theorem~\ref{theorem:solid} to show that the local cohomology module 
\[
H^{t^2+2t}_{\frakm_R}(S)
\]
is zero, where $\frakm_R$ is the homogeneous maximal ideal of $R$. The minimal primes of the ideal~$\frakm_RS$ are described by Theorem~\ref{theorem:determinant:minprimes}; in the notation of that theorem, these are the primes $\frakp_{0,t},\ \frakp_{1,t-1},\ \dots,\ \frakp_{t,0}$. With $\cd$ denoting the cohomological dimension, we shall prove that for each integer $k$ with $0\le k\le t$, one has
\begin{equation}
\label{equation:det:cd:bound}
\cd\left(\frakp_{0,t}\cap\frakp_{1,t-1}\cap\dots\cap\frakp_{k,t-k}\right)\ \le\ t^2+t+1,
\end{equation}
from which it follows that $\cd(\frakm_RS)\le t^2+t+1$; since $t\ge 2$, one has $t^2+t+1<t^2+2t$.

We first claim that 
\begin{multline}
\label{equation:det:cd:inductive}
\cd\left(\frakp_{0,t}\cap\frakp_{1,t-1}\cap\dots\cap\frakp_{k,t-k}\right)\ \le\ \max\left\{\cd(\frakp_{0,t}),\ \cd(\frakp_{1,t-1}),\ \dots,\ \cd(\frakp_{k,t-k}),\right. \\
\left. \cd(\frakp_{0,t-1})-1,\ \cd(\frakp_{1,t-2})-1,\ \dots,\ \cd(\frakp_{k-1,t-k})-1\right\}.
\end{multline}
Quite generally, for ideals $\fraka$ and $\frakb$ of $S$, the Mayer-Vietoris sequence
\[
\CD
@>>> H^i_{\fraka}(S)\oplus H^i_{\frakb}(S) @>>> H^i_{\fraka\cap\frakb}(S) @>>> H^{i+1}_{\fraka+\frakb}(S) @>>>
\endCD
\]
shows that
\[
\cd(\fraka\cap\frakb)\ \le\ \max\left\{\cd(\fraka),\ \cd(\frakb),\ \cd(\fraka+\frakb)-1 \right\}.
\]
Using this for the ideals $\fraka\colonequals\frakp_{0,t}\cap\frakp_{1,t-1}\cap\dots\cap\frakp_{k,t-k}$ and $\frakb\colonequals\frakp_{k+1,t-k-1}$, one has
\begin{multline*}
\cd\left([\frakp_{0,t}\cap\frakp_{1,t-1}\cap\dots\cap\frakp_{k,t-k}]\cap\frakp_{k+1,t-k-1}\right)\ \le\ \max\left\{
\cd\left(\frakp_{0,t}\cap\frakp_{1,t-1}\cap\dots\cap\frakp_{k,t-k}\right),\right. \\
\left. \cd(\frakp_{k+1,t-k-1}),\
\cd\left([\frakp_{0,t}\cap\frakp_{1,t-1}\cap\dots\cap\frakp_{k,t-k}]+\frakp_{k+1,t-k-1}\right)-1
\right\}.
\end{multline*}
Up to taking radicals, the ideal
\[
[\frakp_{0,t}\cap\frakp_{1,t-1}\cap\dots\cap\frakp_{k,t-k}]+\frakp_{k+1,t-k-1}
\]
coincides with
\begin{multline*}
(\frakp_{0,t}+\frakp_{k+1,t-k-1})\cap(\frakp_{1,t-1}+\frakp_{k+1,t-k-1})\cap\dots\cap(\frakp_{k,t-k}+\frakp_{k+1,t-k-1})\\
=\ \frakp_{0,t-k-1}\cap\frakp_{1,t-k-1}\cap\dots\cap\frakp_{k,t-k-1}\ =\ \frakp_{k,t-k-1},
\end{multline*}
since $\frakp_{i_1,j_1}+\frakp_{i_2,j_2}=\frakp_{i,j}$ for $i\colonequals\min\{i_1,i_2\}$ and $j\colonequals\min\{j_1,j_2\}$. It follows that
\begin{multline*}
\cd\left(\frakp_{0,t}\cap\frakp_{1,t-1}\cap\dots\cap\frakp_{k+1,t-k-1}\right)\ \le\ \max\left\{
\cd\left(\frakp_{0,t}\cap\frakp_{1,t-1}\cap\dots\cap\frakp_{k,t-k}\right),\right. \\
\left. \cd(\frakp_{k+1,t-k-1}),\ \cd(\frakp_{k,t-k-1})-1\right\}.
\end{multline*}
Using this inductively, one obtains~\eqref{equation:det:cd:inductive}.

Since the rings $S/\frakp_{i,j}$ are Cohen-Macaulay for $i+j\le t$, Theorem~\ref{theorem:peskine:szpiro} implies that $\cd(\frakp_{i,j}) = \height(\frakp_{i,j})$. Consequently~\eqref{equation:det:cd:inductive} gives
\begin{multline*}
\cd\left(\frakp_{0,t}\cap\frakp_{1,t-1}\cap\dots\cap\frakp_{k,t-k}\right)\ \le\ \max\left\{\height(\frakp_{0,t}),\ \height(\frakp_{1,t-1}),\ \dots,\ \height(\frakp_{k,t-k}),\right. \\
\left. \height(\frakp_{0,t-1})-1,\ \height(\frakp_{1,t-2})-1,\ \dots,\ \height(\frakp_{k-1,t-k})-1\right\}.
\end{multline*}
Using the formula for $\height(\frakp_{i,j})$ from Theorem~\ref{theorem:determinant:minprimes}, it is readily verified that for each fixed integer $\ell$ with $0\le \ell \le t$, one has
\[
\max\left\{\height(\frakp_{0,\ell}),\ \height(\frakp_{1,\ell-1}),\ \dots,\ \height(\frakp_{\ell,0})\right\}\ =\
\ell^2 -(2t+1)\ell +2t(t+1),
\]
which then yields~\eqref{equation:det:cd:bound}.
\end{proof}

\section{Principal radical systems}
\label{sec:prs}

Our approach to Theorems~\ref{theorem:pfaffian:nullcone:intro} and~\ref{theorem:symmetric:nullcone:intro} is via the technique of principal radical systems, developed by Hochster and Eagon in \cite{Hochster-Eagon}. This is a method used to prove that a given homogeneous ideal in a polynomial ring is prime, and defines a Cohen-Macaulay ring, by constructing a finite family of radical ideals that contains the ideal of interest, and inductively prove primality and the Cohen-Macaulay property for select ideals in the family --- the desired properties are first proved for larger ideals in the family. The power of the technique was first demonstrated in proving that generic determinantal rings are Cohen-Macaulay, a result that we used in the proof of Theorem~\ref{theorem:grassmannian:purity}. It was also used in Huneke's proof~\cite{Huneke:TAMS} of Theorem~\ref{theorem:determinant:minprimes}. Kutz~\cite{Kutz} used principal radical systems to prove that symmetric determinantal rings are Cohen-Macaulay, while the corresponding result for Pfaffians is due to Kleppe-Laksov~\cite{Kleppe-Laksov} and independently Marinov~\cite{Marinov1, Marinov2}.

The technique uses the following lemma from~\cite[Section~5]{Hochster-Eagon}; the proof, being brief, is included for the convenience of the reader.

\begin{lemma}
\label{lemma:prs}
Let $S$ be an $\NN$-graded ring, finitely generated over a field $S_0$. Let $I$ be a homogeneous ideal, and $P$ a homogenous prime ideal such that $I\subseteq P$. Suppose there exists a homogeneous element $x$ of positive degree such that $x\notin P$ and $I+xS$ is a radical ideal.

\begin{enumerate}[\quad \rm(1)]
\item If $xP\subseteq I$, then $I$ is radical.
\item If $\rad I=P$, then $I=P$.
\end{enumerate}
\end{lemma}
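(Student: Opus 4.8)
The plan is to handle both parts by the same mechanism: reduce to homogeneous elements, use the hypothesis that $I+xS$ is radical to write any element of $\rad I$ in the form $a+xb$ with $a\in I$, and then exploit that $P$ is prime with $x\notin P$ to force the ``error term'' $b$ back into $P$ --- so that in part~(2) one may strip off factors of $x$ repeatedly, the positivity of $\deg x$ guaranteeing that the process terminates. Since $I$, $P$, and $I+xS$ are homogeneous, and the radical of a homogeneous ideal is again homogeneous, it suffices to argue with homogeneous elements throughout; write $e\colonequals\deg x>0$.

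For part~(1), I would take a homogeneous $f\in\rad I$ of degree $D$. As $I\subseteq I+xS$ and $I+xS$ is radical, we get $f\in\rad I\subseteq\rad(I+xS)=I+xS$, so, comparing degree-$D$ components of this homogeneous ideal, $f=a+xb$ with $a\in I$ homogeneous of degree $D$ and $b$ homogeneous of degree $D-e$ (when $D<e$ the degree-$D$ part of $xS$ is zero and $f=a\in I$ already). Now choose $N\ge1$ with $f^N\in I$; expanding $f^N=(a+xb)^N$, every summand other than $(xb)^N$ carries a factor of $a\in I$, so $x^Nb^N=(xb)^N\equiv f^N\equiv 0\pmod I$, giving $x^Nb^N\in I\subseteq P$. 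Since $P$ is prime and $x\notin P$, this forces $b\in P$, whence $xb\in xP\subseteq I$ by hypothesis, and therefore $f=a+xb\in I$. This proves $\rad I\subseteq I$.

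For part~(2), assuming $\rad I=P$, it is enough (as $I\subseteq P$) to show that every homogeneous $f\in P$ lies in $I$, and I would argue by induction on $\deg f=D$. Exactly as above, $P=\rad I\subseteq\rad(I+xS)=I+xS$ lets us write $f=a+xb$ with $a\in I$ homogeneous of degree $D$ and $b$ homogeneous of degree $D-e$; if $D<e$ then $f=a\in I$, which anchors the induction. Otherwise $xb=f-a\in P$ because $f\in P$ and $a\in I\subseteq P$, so $b\in P$ since $x\notin P$ and $P$ is prime; but $b$ is then a homogeneous element of $P$ of degree $D-e<D$, so $b\in I$ by the inductive hypothesis, and hence $f=a+xb\in I$. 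There is no real obstacle in any of this: the proof is only a few lines, and the sole points meriting a word of care are the passage to homogeneous components (so that $a$ and $b$ may be taken homogeneous of the indicated degrees) and the observation that it is precisely the positivity of $\deg x$ that makes the recursive removal of $x$ in part~(2) stop rather than continue indefinitely.
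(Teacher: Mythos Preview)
Your proof is correct and follows essentially the same approach as the paper. The paper's version is marginally shorter: in~(1) it observes directly that $xb=f-a\in\rad I\subseteq P$, bypassing your binomial expansion, and in~(2) it phrases your degree induction as the graded Nakayama lemma (modulo $I$ one has $P=xP$, hence $P=0$).
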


\begin{proof}
(1) Let $u$ be a homogeneous element in the radical of $I$. Then, $u = i+xs$ for homogeneous elements $i$ in $I$ and $s$ in $S$. Then, $xs = u-i$ lies in the radical of $I$ and therefore in~$P$. Since $x$ does not belong to $P$, the element $s$ must. But then $xs$ is an element of $xP\subseteq I$, so~$u=i+xs$ belongs to $I$.

(2) Replacing $S$ by $S/I$, it suffices to prove that $S$ is a domain; the prime ideal $P$ is now the nilradical of $S$. Let $u$ be a homogeneous element in $P$. Since $S/xS$ is reduced, $u=xv$ for some $v\in S$. But $xv$ lies in the prime ideal $P$ and $x$ does not, so $v\in P$. Thus, $P=xP$ which, by the graded version of Nakayama's lemma, implies that $P$ is zero.
\end{proof}

We will also need the following elementary lemma for inductively proving the Cohen-Macaulay property along a principal radical system:

\begin{lemma}
\label{lemma:CM}
Let $S$ be an $\NN$-graded ring, finitely generated over a field $S_0$. Let $Q_1$ and~$Q_2$ be ideals such that $S/Q_1$ and $S/Q_2$ are Cohen-Macaulay rings of equal dimension, say $d$, and such that $S/(Q_1+Q_2)$ is Cohen-Macaulay of dimension $d-1$. Then the ring $S/(Q_1\cap Q_2)$ is Cohen-Macaulay of dimension $d$.
\end{lemma}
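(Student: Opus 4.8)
The plan is to derive the Cohen--Macaulay property of $S/(Q_1\cap Q_2)$ from the Cohen--Macaulay property of the three pieces $S/Q_1$, $S/Q_2$, and $S/(Q_1+Q_2)$ by means of a long exact sequence of local cohomology modules applied to a Mayer--Vietoris-type short exact sequence. First I would record the short exact sequence of graded $S$-modules
\[
0\ \to\ S/(Q_1\cap Q_2)\ \to\ S/Q_1\ \oplus\ S/Q_2\ \to\ S/(Q_1+Q_2)\ \to\ 0,
\]
where the first map is the diagonal $\bar s\mapsto(\bar s,\bar s)$ and the second is the difference $(\bar a,\bar b)\mapsto\bar a-\bar b$; exactness is a routine verification. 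Let $\frakm$ denote the homogeneous maximal ideal of $S$.

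Next I would apply the functor $H^\bullet_{\frakm}(-)$ and use the hypotheses to locate where the local cohomology of each term is concentrated. Since $S/Q_1$ and $S/Q_2$ are Cohen--Macaulay of dimension $d$, we have $H^i_{\frakm}(S/Q_1)=0=H^i_{\frakm}(S/Q_2)$ for $i\neq d$, and similarly $H^i_{\frakm}(S/(Q_1+Q_2))=0$ for $i\neq d-1$ since that ring is Cohen--Macaulay of dimension $d-1$. (Here I am using that for a finitely generated graded algebra over a field, $H^i_{\frakm}(M)$ vanishes for $i$ above $\dim M$ and, when $M$ is Cohen--Macaulay, also for $i$ below $\dim M$; these are standard facts, e.g.\ from~\cite{Bruns-Herzog}.) The relevant stretch of the long exact sequence then reads
\[
H^{i-1}_{\frakm}(S/(Q_1+Q_2))\ \to\ H^i_{\frakm}(S/(Q_1\cap Q_2))\ \to\ H^i_{\frakm}(S/Q_1)\oplus H^i_{\frakm}(S/Q_2).
\]
For $i<d-1$ both flanking terms vanish, hence $H^i_{\frakm}(S/(Q_1\cap Q_2))=0$. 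For $i=d-1$ the left term could be nonzero, but I claim it is killed: the portion
\[
H^{d-1}_{\frakm}(S/Q_1)\oplus H^{d-1}_{\frakm}(S/Q_2)\ \to\ H^{d-1}_{\frakm}(S/(Q_1+Q_2))\ \to\ H^d_{\frakm}(S/(Q_1\cap Q_2))\ \to\ H^d_{\frakm}(S/Q_1)\oplus H^d_{\frakm}(S/Q_2)
\]
has its first term zero, so the connecting map $H^{d-1}_{\frakm}(S/(Q_1+Q_2))\to H^d_{\frakm}(S/(Q_1\cap Q_2))$ is injective; thus $H^{d-1}_{\frakm}(S/(Q_1\cap Q_2))$ sits between $H^{d-1}_{\frakm}(S/(Q_1+Q_2))=0$ on one side... more precisely, from $H^{d-2}_{\frakm}(S/(Q_1+Q_2))=0\to H^{d-1}_{\frakm}(S/(Q_1\cap Q_2))\to H^{d-1}_{\frakm}(S/Q_1)\oplus H^{d-1}_{\frakm}(S/Q_2)=0$ we get $H^{d-1}_{\frakm}(S/(Q_1\cap Q_2))=0$ directly. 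Hence $H^i_{\frakm}(S/(Q_1\cap Q_2))=0$ for all $i<d$.

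Finally I would confirm that $\dim S/(Q_1\cap Q_2)=d$ — this is immediate since $\sqrt{Q_1\cap Q_2}=\sqrt{Q_1}\cap\sqrt{Q_2}$, so the minimal primes of $Q_1\cap Q_2$ are among those of $Q_1$ and of $Q_2$, and at least one such prime has coheight $d$ because $\dim S/Q_1=d$. Combined with the vanishing $H^i_{\frakm}(S/(Q_1\cap Q_2))=0$ for $i<d=\dim S/(Q_1\cap Q_2)$, the graded local cohomology criterion for Cohen--Macaulayness gives that $S/(Q_1\cap Q_2)$ is Cohen--Macaulay of dimension $d$. The only place requiring care is the bookkeeping at the spot $i=d-1$ in the long exact sequence — one must make sure no nonzero connecting homomorphism survives there — but as indicated this follows cleanly once the vanishing ranges for the three known Cohen--Macaulay modules are in place, so I do not anticipate a genuine obstacle.
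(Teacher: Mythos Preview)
Your proposal is correct and follows exactly the approach of the paper: write down the Mayer--Vietoris short exact sequence $0\to S/(Q_1\cap Q_2)\to S/Q_1\oplus S/Q_2\to S/(Q_1+Q_2)\to 0$ and apply $H^\bullet_{\frakm}(-)$. The paper's proof is terser (it simply says the result follows from the resulting long exact sequence), whereas you spell out the vanishing ranges and the dimension check; note that your detour at $i=d-1$ is unnecessary, since for every $i<d$ both $H^{i-1}_{\frakm}(S/(Q_1+Q_2))$ and $H^i_{\frakm}(S/Q_1)\oplus H^i_{\frakm}(S/Q_2)$ already vanish directly.
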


\begin{proof}
One has an exact sequence of the form
\[
\CD
0 @>>> S/(Q_1\cap Q_2) @>>> S/Q_1 \oplus S/Q_2 @>>> S/(Q_1+Q_2) @>>> 0.
\endCD
\]
The result follows from the local cohomology exact sequence obtained by applying the functor $H^{\bullet}_\frakm(-)$, where $\frakm$ is the homogeneous maximal ideal of $S$.
\end{proof}

The following result will be used in order to employ Lemma~\ref{lemma:prs}.

\begin{lemma}
\label{lemma:minors}
Let $M$ be a matrix with entries from a commutative ring. Fix a positive integer~$c$, and set $M|_c$ to be the submatrix consisting of the first $c$ columns of $M$. Then, for each integer~$b$ with $b>c$, one has
\[
m_{1b}\ I_k(M|_c)\ \subseteq\ I_{k+1}(M)+(m_{11},\ m_{12},\ \dots,\ m_{1c}).
\]
\end{lemma}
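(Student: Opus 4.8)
The plan is to establish the inclusion by a direct cofactor-expansion argument, working modulo the ideal $(m_{11}, m_{12}, \dots, m_{1c})$, which allows us to pretend that the first row of $M|_c$ is zero. Concretely, write $\overline{M}$ for the image of $M$ in the quotient ring $\overline{R} \colonequals R/(m_{11}, \dots, m_{1c})$, where $R$ is the ambient commutative ring; then the first $c$ entries of the top row of $\overline{M}$ vanish. It suffices to prove $\overline{m}_{1b}\, I_k(\overline{M}|_c) \subseteq I_{k+1}(\overline{M})$ in $\overline{R}$. So fix a size-$k$ minor $\delta$ of $\overline{M}|_c$, say with row index set $\alpha$ (of size $k$) and column index set $\gamma \subseteq \{1, \dots, c\}$ (of size $k$). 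We must show $\overline{m}_{1b}\,\delta$ lies in $I_{k+1}(\overline{M})$.

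The key step is a case split on whether $1 \in \alpha$. If $1 \notin \alpha$, then $\alpha \cup \{1\}$ has size $k+1$, and $\gamma \cup \{b\}$ has size $k+1$ since $b > c \ge$ every element of $\gamma$; consider the size-$(k+1)$ minor of $\overline{M}$ on rows $\alpha \cup \{1\}$ and columns $\gamma \cup \{b\}$. Expanding this minor along its row $1$, every entry of that row in a column of $\gamma$ is zero in $\overline{R}$ (since those columns lie in $\{1,\dots,c\}$ and the first row of $\overline{M}|_c$ is zero), so the only surviving term is $\pm\,\overline{m}_{1b}$ times the complementary minor, which is exactly $\pm\delta$. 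Hence $\overline{m}_{1b}\,\delta \in I_{k+1}(\overline{M})$, up to sign. If instead $1 \in \alpha$, then the row of $\delta$ corresponding to index $1$ consists entirely of entries $\overline{m}_{1j}$ with $j \in \gamma \subseteq \{1,\dots,c\}$, all of which are zero in $\overline{R}$; thus $\delta = 0$ in $\overline{R}$ and the inclusion is trivial. Since the minors $\delta$ generate $I_k(\overline{M}|_c)$, this proves the claim in $\overline{R}$, and lifting back to $R$ gives exactly $m_{1b}\, I_k(M|_c) \subseteq I_{k+1}(M) + (m_{11}, \dots, m_{1c})$.

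I do not expect a serious obstacle here: the statement is a standard Laplace-expansion identity for determinantal ideals, and the only point requiring a modicum of care is the bookkeeping of signs and the verification that $\gamma \cup \{b\}$ genuinely has $k+1$ distinct elements, which is guaranteed precisely by the hypothesis $b > c$. The device of passing to the quotient by $(m_{11},\dots,m_{1c})$ is what makes the argument clean, since it reduces the identity to the familiar fact that a determinant with a row having a single (potentially) nonzero entry expands as that entry times a complementary minor.
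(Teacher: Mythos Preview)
Your proof is correct and follows essentially the same approach as the paper's: both pass to a quotient where the first $c$ entries of row~$1$ vanish, dispose of the case where the $k\times k$ minor involves row~$1$ (it is then zero), and in the remaining case append row~$1$ and column~$b$ to obtain a $(k+1)\times(k+1)$ minor whose Laplace expansion along row~$1$ collapses to $\pm m_{1b}\delta$. The only cosmetic difference is that the paper works modulo the full ideal $I_{k+1}(M)+(m_{11},\dots,m_{1c})$ from the outset, whereas you work modulo $(m_{11},\dots,m_{1c})$ and exhibit the relevant $(k+1)$-minor explicitly.
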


\begin{proof}
Working modulo the ideal $I_{k+1}(M)+(m_{11},\ m_{12},\ \dots,\ m_{1c})$, we reuse the notation~$M$ and $m_{ij}$ in the quotient ring, and show that $m_{1b}$ annihilates the ideal $I_k(M|_c)$. If $c$ is less than $k$, then $I_k(M|_c)=0$. Assume $c\ge k$, and fix $b$ and a $k \times k$ minor of $M|_c$. If the minor involves the first row of $M$, it clearly vanishes.

Therefore we may assume that the minor involves $k$ rows other than the first row. Consider the $(k+1) \times (k+1)$ submatrix of $M$ that involves, additionally, the first row and the~$b$-th column of $M$. This matrix has determinant zero, so the result follows.
\end{proof}

\section{Pfaffian rings}
\label{sec:alt}

Let $t$ be a positive integer, and $X$ a $2t \times 2t$ alternating matrix. The \emph{Pfaffian} of $X$ is
\[
\pf X\colonequals\sum_\sigma\sgn(\sigma)x_{\sigma(1)\sigma(2)}x_{\sigma(3)\sigma(4)}\cdots x_{\sigma(2t-1)\sigma(2t)},
\]
where the sum is taken over permutations of $\{1,2,\dots,2t\}$ that satisfy
\[
\sigma(1) < \sigma(3) < \cdots < \sigma(2t-1) \quad\text{ and }\quad \sigma(1)<\sigma(2),\ \dots,\ \sigma(2t-1) < \sigma(2t).
\]
It is readily seen that $(\pf X)^2=\det X$.

For an alternating matrix $X$ with entries from a commutative ring, we use $\Pf_{2t}(X)$ to denote the ideal generated by the Pfaffians of the size $2t$ principal submatrices of $X$.

Suppose $X$ is an $n\times n$ alternating matrix of indeterminates over a field $K$. In this case, the ring~$K[X]/\Pf_{2t+2}(X)$ is a Gorenstein unique factorization domain of dimension
\[
\binom{n}{2} - \binom{n-2t}{2},
\]
with the convention that $\binom{i}{j}=0$ if $i<j$. The ring $K[X]/\Pf_{2t+2}(X)$ is regular precisely if~$n\le 2t+1$, for then $\Pf_{2t+2}(X)=0$. The Cohen-Macaulay property is due to~\cite{Kleppe-Laksov} and~\cite{Marinov1, Marinov2}; the rings are unique factorization domains by \cite{Avramov}, hence Gorenstein.

The ideal $\Pf_4(X)$ is generated by the elements
\[
x_{ij}x_{kl}-x_{ik}x_{jl}+x_{il}x_{jk},\qquad \text{ for }\ 1\le i<j<k<l\le n. 
\]
These are precisely the Pl\"ucker relations for the Grassmannian $G(2,n)$, and~$K[X]/\Pf_4(X)$ is isomorphic to the homogeneous coordinate ring for $G(2,n)$ from \S\ref{sec:grassmannian}.

Let $Y$ be a $2t\times n$ matrix of indeterminates over a field $K$. Set~$S\colonequals K[Y]$, and let $\Omega$ be the size $2t$ standard symplectic block matrix~\eqref{equation:omega}. Then $Y^\tr\Omega Y$ is an alternating matrix of rank $\min\{2t,n\}$. For $X$ an $n \times n$ alternating matrix of indeterminates, the entrywise map
\[
X\to Y^\tr\Omega Y
\]
induces a $K$-algebra isomorphism between $K[X]/\Pf_{2t+2}(X)$ and the subring $R\colonequals K[Y^\tr\Omega Y]$ of $S$. Our goal in this section is to determine when the inclusion~$\phi\colon R\to S$ is pure. The symplectic group~$\Sp_{2t}(K)$ acts $K$-linearly on $S$, where
\[
M\colon Y\mapsto MY\qquad\text{ for }\ M\in\Sp_{2t}(K).
\]
Since $M^\tr\Omega M=\Omega$ for $M\in\Sp_{2t}(K)$, it follows that the entries of $Y^\tr\Omega Y$ are fixed by the group action; when the field $K$ is infinite, the invariant ring is precisely the subring~$R$, see~\cite[\S6]{DeConcini-Procesi} or~\cite[Theorem~5.1]{Hashimoto}. When the field $K$ has characteristic zero, the group~$\Sp_{2t}(K)$ is linearly reductive and it follows that the invariant ring $R$ is a direct summand of $S$ as an $R$-module; hence~$\phi\colon R\to S$ is pure when $K$ has characteristic zero.

\subsection{Symplectic forms and preliminaries}

Let $K$ be a field and $V$ the vector space~$K^{2t}$ with the standard basis. Then $\Omega$ determines the bilinear form $B\colon V\times V\to K$ given by
\begin{equation}
\label{equation:symplectic:form}
(v_1,v_2)\mapsto v_1^\tr\Omega v_2. 
\end{equation}
Note that $B$ is nondegenerate and alternating, i.e., $B(v,v)=0$ for all $v\in V$; in other words,~$B$ is a \emph{symplectic form} on $V$. One has
\[
B(v_1,v_2)\ =\ -B(v_2,v_1)\qquad\text{ for all }v_i\in V.
\]

The matrix for $B$ with respect to the chosen basis is $\Omega$, while a change of basis results in a matrix of the form $C^\tr\Omega C$. In view of this, matrices $M$ and $N$ are \emph{cogredient} if there exists an invertible matrix $C$ such that
\begin{equation}
\label{equation:cogredient}
N\ =\ C^\tr MC.
\end{equation}

A vector subspace $W$ of $V$ is \emph{isotropic} if $B(w_1,w_2)=0$ for all $w_i\in W$, equivalently if~$W\subseteq W^\perp$. Since $B$ is nondegenerate, for any subspace $W$ one has
\[
\rank W + \rank W^\perp\ =\ 2t.
\]
Hence an isotropic subspace of $V$ has rank at most $t$. Any isotropic subspace of $V$ is contained in one that has maximal rank, which is a \emph{Lagrangian subspace}.

\begin{lemma}
\label{lemma:functional:symplectic}
Let $K$ be a field. Consider the vector space $K^{2t}$ equipped with a symplectic form. Let $L$ be a nonzero linear functional on $K^{2t}$, and let
\[
V_1\subseteq V_2\subseteq \dots\subseteq V_m
\]
be isotropic subspaces of $K^{2t}$ with $\rank V_j\le j$ for each $j$, with $m\le t$. Let $k$ be an integer between $1$ and $m$.

Suppose $L$ vanishes on $V_k$. Then there exist isotropic subspaces
\[
W_1\subset W_2\subset\dots\subset W_m
\]
such that, for each $j$, one has $V_j\subseteq W_j$ and $\rank W_j=j$, and $L$ vanishes on $W_k$.
\end{lemma}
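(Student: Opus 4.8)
The plan is to encode the condition on $L$ geometrically and then to build the flag $W_1\subset\dots\subset W_m$ from the top downward. Since $B$ is nondegenerate there is a unique $u\in K^{2t}$ with $L(v)=B(u,v)$ for all $v$; as $L\neq 0$ we have $u\neq 0$, so $\ker L=u^\perp$ is a hyperplane, and because $B$ is alternating, $u\in u^\perp=\ker L$. The hypothesis that $L$ vanishes on $V_k$ is exactly the statement $V_k\subseteq\ker L$, equivalently $u\in V_k^\perp$.

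First I would isolate two extension facts. \emph{(i)} If $W$ is isotropic with $\dim W<t$, then $W\subsetneq W^\perp$, so any choice of $w\in W^\perp\setminus W$ yields an isotropic subspace $W+Kw$ of one larger dimension; iterating, any isotropic subspace of dimension at most $t$ enlarges to an isotropic subspace of any prescribed dimension up to $t$. \emph{(ii)} If moreover $W\subseteq\ker L$ and $\dim W\leq t-1$, then the new vector can be kept inside $\ker L$: since $W\subseteq W^\perp\cap u^\perp$, it suffices to see $\dim(W^\perp\cap u^\perp)>\dim W$, which follows from a short computation splitting on whether $u\in W$ (this is where $\dim W\le t-1$ enters); then $w\in(W^\perp\cap u^\perp)\setminus W$ works, and $W+Kw\subseteq\ker L$ remains isotropic.

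For the construction, I would first produce the top space $W_m$. If $k<m$, apply \emph{(i)} to enlarge $V_m$ (of dimension at most $m\le t$) to an isotropic subspace $W_m$ of dimension exactly $m$. If $k=m$, apply \emph{(ii)} instead, starting from $V_m=V_k\subseteq\ker L$ and noting every intermediate dimension is at most $m-1\le t-1$, to obtain such a $W_m$ that is in addition contained in $\ker L$. Then fill in the flag downward inside $W_m$: given $W_{j+1}$, for $j\neq k$ choose any $j$-dimensional $W_j$ with $V_j\subseteq W_j\subseteq W_{j+1}$ (possible since $V_j\subseteq V_{j+1}\subseteq W_{j+1}$ and $\dim V_j\le j<j+1$), and when $k<m$, for $j=k$ choose $W_k$ with $V_k\subseteq W_k\subseteq W_{k+1}\cap\ker L$ — this is possible because $W_{k+1}\cap\ker L$ contains $V_k$ and, as $\ker L$ has codimension one, has dimension at least $k$. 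Every $W_j$ lies in the isotropic space $W_m$, hence is isotropic; the inclusions are strict by the dimension count; $V_j\subseteq W_j$ and $\dim W_j=j$ hold by construction; and $L$ vanishes on $W_k$ since $W_k\subseteq\ker L$.

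The main obstacle, and the reason to build the flag from the top down rather than from $W_k$ outward, is the isotropy requirement: if one fixed $W_k\supseteq V_k$ inside $\ker L$ first and tried to enlarge it upward, the sum $W_k+V_{k+1}$ need not be isotropic, because $V_{k+1}$ need not lie in $W_k^\perp$. Realizing every $W_j$ as a subspace of one fixed isotropic space $W_m$ sidesteps this entirely. The only genuinely substantive point is fact \emph{(ii)} — enlarging an isotropic subspace while remaining inside the hyperplane $\ker L$ — and there the hypothesis $m\le t$ is precisely what makes the relevant dimension count come out positive.
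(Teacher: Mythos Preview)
Your argument is correct and follows essentially the same route as the paper's proof: both construct the top space $W_m$ first (enlarging within $\ker L$ when $k=m$), then fill the flag downward inside $W_m$, handling the index $j=k$ by intersecting $W_{k+1}$ with the hyperplane $\ker L$. Your explicit identification $\ker L=u^\perp$ via the symplectic form is a minor presentational addition, but the structure and key steps match the paper's proof.
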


\begin{proof} It suffices to consider the case where $m=t$. Denote the symplectic form by $B$, and set~$H\colonequals \ker L$, a codimension one subspace. We construct the subspaces $W_j$ by reverse induction on $j$. If $V_t$ has dimension $t$, simply choose $W_t$ to be $V_t$ itself. If $V_t$ has dimension less than $t$, then $\dim(V_t^\perp)>t$, so $\dim(V_t^\perp \cap H) \ge t$.

If $k<t$, take $W_t$ to be a Lagrangian subspace of $K^{2t}$ containing $V_t$. If $k=t$, since~${V_t\subset H}$, there exists a nonzero vector $x\in (V_t^\perp \cap H) \smallsetminus V$. Then
\[
V_t +Kx\ \subseteq\ (V_t+Kx)^\perp \cap H.
\]
Continuing in this manner, we can extend $V_t$ to a Lagrangian subspace of $K^{2t}$ on which $L$ vanishes.

Assume that the vector spaces $W_{j+1}, W_{j+2}, \dots, W_t$ have been constructed satisfying the required conditions. There are two cases: if $j$ is different from $k$, simply choose $W_j$ of dimension $j$ such that $V_j \subseteq W_j \subseteq W_{j+1}$; this can be done since $V_j$ has dimension at most $j$ and $W_{j+1}$ has dimension $j+1$. If $j$ equals $k$, choose $W_k$ of dimension $k$ such that ${V_k \subseteq W_k \subseteq H \cap W_{k+1}}$; this can indeed be done since $V_k$ has dimension at most $k$, and~$H \cap W_{k+1}$ has dimension at least
\[
(2t-1)+(k+1)-2t\ =\ k.
\]
Finally, since any subspace of an isotropic subspace is isotropic, we are done.
\end{proof}

Let $M$ be a size $2t\times n$ matrix over $K$, satisfying $M^\tr\Omega M = 0$. Then the columns of~$M$ span an isotropic subspace, so $\rank M \le t$, i.e., $I_{t+1}(M)=0$. By the Nullstellensatz, if $Y$ is a size $2t\times n$ matrix of indeterminates over an algebraically closed field $K$, then
\[
I_{t+1}(Y)\ \subseteq\ \rad (Y^\tr\Omega Y),
\]
where $(Y^\tr\Omega Y)$ is the ideal of $K[Y]$ generated by the entries of the matrix $Y^\tr\Omega Y$. We strengthen this next:

\begin{lemma}
\label{lemma:pfaffian:rank}
Let $Y$ be a size $2t\times n$ matrix of indeterminates over a field $K$. Then, in the polynomial ring $K[Y]$, one has
\[
I_{t+1}(Y)\ \subseteq\ (Y^\tr\Omega Y).
\]
\end{lemma}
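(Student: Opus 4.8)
The plan is to prove the containment one minor at a time, by turning each size $t+1$ minor of $Y$ into a Pfaffian and exploiting a pigeonhole vanishing. The two ingredients I would use are: (i) the standard identity $\pf(M^\tr\Omega M)=\det(M)\,\pf(\Omega)$ for any $2t\times 2t$ matrix $M$ over a commutative ring, which for the standard symplectic matrix $\Omega$ (with $\pf(\Omega)=\pm1$) reads $\det(M)=\pm\,\pf(M^\tr\Omega M)$; and (ii) the combinatorial fact that \emph{a $2t\times 2t$ alternating matrix whose principal submatrix on rows and columns $\{1,\dots,t+1\}$ vanishes has zero Pfaffian}. For (ii): $\pf$ is a signed sum over perfect matchings of $\{1,\dots,2t\}$, and since the $(t{+}1)$-element set $\{1,\dots,t+1\}$ cannot be matched entirely into the $(t{-}1)$-element set $\{t+2,\dots,2t\}$, every matching pairs two elements of $\{1,\dots,t+1\}$ with each other and thus contributes a zero factor. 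Equivalently, for the generic $2t\times 2t$ alternating matrix $X$ the polynomial $\pf X$ lies in the ideal generated by the entries $x_{ij}$ with $1\le i<j\le t+1$.

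With these in hand, fix a size $t+1$ minor of $Y$, say on rows $R$ and columns $C$, and set $R^\comp\colonequals\{1,\dots,2t\}\setminus R$, a set of $t-1$ elements. Let $Y'$ be the $2t\times(t+1)$ submatrix of $Y$ on the columns $C$, and let $E$ be the $2t\times(t-1)$ matrix whose columns are the standard basis vectors $e_j$, $j\in R^\comp$. Expanding $\det[\,Y'\mid E\,]$ along the last $t-1$ columns --- whose nonzero entries sit exactly in the rows $R^\comp$ --- yields $\det[\,Y'\mid E\,]=\pm(\text{the chosen minor of }Y)$. Now put $M\colonequals[\,Y'\mid E\,]$; the alternating matrix $M^\tr\Omega M$ has principal submatrix on rows and columns $\{1,\dots,t+1\}$ equal to $Y'^\tr\Omega Y'$, whose entries are among the entries of $Y^\tr\Omega Y$. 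Substituting $X\mapsto M^\tr\Omega M$ into the generic statement of the previous paragraph gives $\pm(\text{the chosen minor})=\pf(M^\tr\Omega M)\in(Y^\tr\Omega Y)$. Since $R$ and $C$ were arbitrary, this yields $I_{t+1}(Y)\subseteq(Y^\tr\Omega Y)$; note there is no need to first reduce to the case $n=t+1$.

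The step I expect to need the most care is making sure the argument is genuinely characteristic-free. The tempting shortcut --- noting that contraction with $\Omega$ carries $v_1\wedge\dots\wedge v_{t+1}$ (the columns of $Y'$) into $(Y^\tr\Omega Y)\cdot\Lambda^{t-1}$ and then ``inverting'' the contraction $\Lambda^{t+1}\to\Lambda^{t-1}$ --- is not legitimate: that map is an isomorphism in characteristic zero but acquires a nonzero kernel in characteristic $2$ already for $t=3$ (its determinant over $\ZZ$ equals $\pm2$ there). The Pfaffian substitution sidesteps this completely, since both the identity $\pf(M^\tr\Omega M)=\pm\det M$ and the perfect‑matching pigeonhole hold over every field --- precisely the robustness one wants, since in positive characteristic $\Sp_{2t}$ is no longer linearly reductive.
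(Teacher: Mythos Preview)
Your argument is correct and is essentially the paper's own proof: both augment the relevant $(t{+}1)$ columns by standard basis vectors to a $2t\times 2t$ matrix $M$, invoke $\det M=\pm\pf(M^\tr\Omega M)$, and then use that the upper-left $(t{+}1)\times(t{+}1)$ block of $M^\tr\Omega M$ lies in $(Y^\tr\Omega Y)$. The only cosmetic difference is the last step---the paper verifies the vanishing modulo $(Y^\tr\Omega Y)$ via a block-determinant computation, whereas your perfect-matching pigeonhole gives the ideal membership more directly; the paper also first reduces to $n=t{+}1$, which you rightly observe is unnecessary.
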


\begin{proof}
If $n\le t$, there is nothing to prove. If $Y'$ is a truncation of $Y$ obtained by deleting certain columns, then the alternating matrix $Y'^\tr\Omega Y'$ is a truncation of the alternating matrix~$Y^\tr\Omega Y$ obtained by deleting the corresponding columns and rows; thus, it suffices to prove the lemma when $Y$ is size $2t\times (t+1)$.

Next, note that any size $t+1$ minor of $Y$ equals the determinants of a matrix of the form~$Y\# Z$, where $Z$ is a suitable size $2t\times (t-1)$ matrix with entries $0$ and $1$, and $\#$ denotes the concatenation of matrices; for example, for the upper size $t+1$ minor, one may take~$Z$ to be the block matrix $\begin{pmatrix} 0 \\ \1 \end{pmatrix}$.

Thus, it suffices to prove that for all matrices $Z$ of size $2t\times (t-1)$, one has
\[
\det(Y\# Z)\ \in\ (Y^\tr\Omega Y).
\]
Since $\det(Y\# Z)=\pf((Y\# Z)^\tr\Omega(Y\# Z))$, it suffices to prove that
\[
\pf((Y\# Z)^\tr\Omega(Y\# Z))\ \in\ (Y^\tr\Omega Y).
\]
But $(Y\# Z)^\tr\Omega(Y\# Z)$ is a size $2t$ alternating matrix, and $Y^\tr\Omega Y$ its upper-left size $t+1$ submatrix; working modulo the entries of $Y^\tr\Omega Y$, it suffices to check that the Pfaffian of a size~$2t$ alternating matrix of the form
\[
\begin{pmatrix}
0 & A\\
-A^\tr & B
\end{pmatrix}
\]
is zero, where $A$ and $B$ are size $t\times t$, and the first column of $A$ is zero; this is immediate, as the determinant of such a matrix is zero.
\end{proof}

When $t=1$ in Lemma~\ref{lemma:pfaffian:rank}, one has the equality $I_{t+1}(Y)=(Y^\tr\Omega Y)$, as we will see in the following discussion:

\subsection{Secant varieties of Grassmannians}
\label{ssec:secant}

Let $Y$ be a size $2t\times n$ matrix of indeterminates over a field $K$. Set $\frakP$ to be the ideal generated by the entries of $Y^\tr\Omega Y$. While we will prove later that~$\frakP$ is prime and defines a Cohen-Macaulay ring, it is worth mentioning that when $t=1$ one has
\begin{alignat*}2
Y^\tr\Omega Y\ &=\ \begin{pmatrix}
y_{11} & y_{21}\\
\vdots&\vdots\\
y_{1n} & y_{2n}
\end{pmatrix}
\begin{pmatrix}
0 & 1\\
-1 & 0
\end{pmatrix}
\begin{pmatrix}
y_{11} & \cdots & y_{1n}\\
y_{21} & \cdots & y_{2n}
\end{pmatrix}\\
&=\ \begin{pmatrix}
0 & \Delta_{12} & \Delta_{13} & \hdots & \Delta_{1n}\\
-\Delta_{12} & 0 & \Delta_{23} & \hdots & \Delta_{2n}\\
-\Delta_{13} & -\Delta_{23} & 0 & \hdots & \Delta_{3n}\\
\vdots & \vdots & & \ddots & \vdots\\
-\Delta_{1n} & -\Delta_{2n} & -\Delta_{3n} & \hdots & 0
\end{pmatrix},
\end{alignat*}
i.e., $Y^\tr\Omega Y$ is an alternating matrix where, for $i<j$, the matrix entry $(Y^\tr\Omega Y)_{ij}$ is
\[
\Delta_{ij}\colonequals y_{1i}y_{2j}-y_{1j}y_{2i}.
\]
It follows that $\frakP$ coincides with the determinantal ideal $I_2(Y)$ that has height $n-1$, and defines a Cohen-Macaulay ring $K[Y]/\frakP$. The ring $K[Y^\tr\Omega Y]$ is the homogeneous coordinate ring of the Grassmannian $G(2,n)$ under the Pl\"ucker embedding in $\PP^{\binom{n}{2}-1}$.

More generally, for $t\ge 1$, the ring $K[Y^\tr\Omega Y]$ is the homogeneous coordinate ring of the order $t-1$ secant variety $G(2,n)^{t-1}$, i.e., the closure of the union of linear spaces spanned by $t$ points of $G(2,n)$: For $1\le i<j\le n$, the alternating matrix $Y^\tr\Omega Y$ has $ij$-th entry~$B(v_i,v_j)$, where $v_i$ and $v_j$ are the $i$-th and $j$-th columns of $Y$, and $B$ is the symplectic form \eqref{equation:symplectic:form}; specifically,
\[
(Y^\tr\Omega Y)_{ij}\ =\ (y_{1i}y_{t+1,j}-y_{1j}y_{t+1,i})\ +\ \cdots\ +\ (y_{ti}y_{2t,j}-y_{tj}y_{2t,i}).
\]
In particular,
\[
\dim G(2,n)^{t-1}\ =\ \binom{n}{2} - \binom{n-2t}{2}-1.
\]
Recall that for an irreducible closed projective variety $X$ of dimension $d$ in $\PP^N$, the \emph{expected dimension} of the order $s$ secant variety $X^s$ is $\min\{N,\ ds+d+s\}$; when $\dim X^s$ is less than the expected dimension, $X^s$ is \emph{defective}. Using the formula above, it is readily seen that~$G(2,n)^{t-1}$ is defective precisely if $t\ge 2$ and $n\ge 2t+2$, confer \cite[Theorem~2.1]{CGG}.

The dimension and the defining equations of secant varieties of other Grassmannians are largely unknown.

\subsection{The complete intersection property}

The ideal $\frakP$ has $\binom{n}{2}$ minimal generators corresponding to the upper triangular entries of the alternating matrix $Y^\tr\Omega Y$. We next prove that in the case $n\le t+1$, these generators form a regular sequence, i.e., that~$K[Y]/\frakP$ is a complete intersection ring:

\begin{theorem}
\label{theorem:pfaffian:ci}
Let $Y$ be a~$2t\times n$ matrix of indeterminates over a field $K$, where $n\le t+1$. Set~$S\colonequals K[Y]$ and~$\frakP\colonequals (Y^\tr\Omega Y)S$. Then $S/\frakP$ is a complete intersection ring.
\end{theorem}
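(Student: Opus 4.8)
The plan is to prove that the $\binom{n}{2}$ generators of $\frakP$ form a regular sequence by a dimension count: since $S = K[Y]$ has dimension $2tn$, it suffices to show that $S/\frakP$ has dimension exactly $2tn - \binom{n}{2}$, i.e., that $\height \frakP = \binom{n}{2}$, because an ideal in a Cohen-Macaulay (indeed polynomial) ring generated by $c$ elements and of height $c$ is automatically generated by a regular sequence. Since $\height \frakP \le \binom{n}{2}$ always holds (the number of generators bounds the height), the whole task reduces to the lower bound $\dim S/\frakP \le 2tn - \binom{n}{2}$.

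To get the dimension bound, I would pass to an algebraically closed field and analyze the affine variety $\mathcal{V}(\frakP) \subseteq \mathbb{A}^{2tn}$, whose points are the $2t \times n$ matrices $M$ over $K$ with $M^\tr \Omega M = 0$, equivalently the matrices whose columns span an isotropic subspace of the symplectic space $K^{2t}$. Stratify this variety by the rank $r$ of $M$, with $0 \le r \le \min\{t, n\} = n$ (using $n \le t+1$, and noting that $r \le t$ since the column space is isotropic — this is exactly the observation recorded just before Lemma~\ref{lemma:pfaffian:rank}). A matrix $M$ of rank $r$ in this variety is determined by choosing its column space, which must be an $r$-dimensional isotropic subspace $W$ of $K^{2t}$, together with a surjection $K^n \twoheadrightarrow W$; so I would bound the dimension of the rank-$r$ stratum by $\dim(\text{isotropic Grassmannian of } r\text{-planes}) + \dim\{\text{surjections } K^n \to K^r\}$. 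The isotropic Grassmannian $IG(r, 2t)$ has dimension $r(2t - r) - \binom{r}{2} = \frac{r(4t - 3r + 1)}{2}$ (this is standard: it is $r$ general linear conditions fewer than the full Grassmannian in codimension... more precisely, $\dim IG(r,2t) = r(2t-r) - \binom{r}{2}$), and the space of $2t \times n$ matrices with a fixed $r$-dimensional column space has dimension $rn$. So each stratum has dimension at most $rn + r(2t-r) - \binom{r}{2}$, and I would check that this is maximized, over $0 \le r \le n$, at $r = n$ (where it equals $n(2t) - \binom{n}{2} = 2tn - \binom{n}{2}$, using $n \le t+1$ to ensure the maximizing value of $r$ in the allowed range is indeed the endpoint $r = n$). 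This yields $\dim \mathcal{V}(\frakP) \le 2tn - \binom{n}{2}$, hence $\height \frakP \ge \binom{n}{2}$, completing the argument.

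The main obstacle I anticipate is making the stratification dimension count rigorous and uniform in the characteristic: one must be careful that the incidence variety $\{(M, W) : \im M \subseteq W,\ W \in IG(r,2t)\}$ has the claimed dimension and that the projection to matrices of rank exactly $r$ has fibers of the expected dimension, and one must handle the isotropic Grassmannian's dimension formula correctly (including its behavior when $r < t$ versus $r = t$). An alternative, perhaps cleaner, route avoiding secant-variety geometry is to verify the height bound directly: exhibit a chain or use the fact that by Lemma~\ref{lemma:pfaffian:rank}, $I_{t+1}(Y) \subseteq \frakP$, but since $n \le t+1$ this gives $I_{t+1}(Y) = 0$ unless $n = t+1$, in which case $I_{t+1}(Y)$ is the principal ideal generated by $\det Y|_{t+1}$ — not directly enough. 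So I would commit to the geometric stratification argument, and the verification that the maximum of $rn + r(2t - r) - \binom{r}{2}$ over $r \in \{0, 1, \dots, n\}$ occurs at $r = n$ precisely when $n \le t + 1$ is the key inequality to nail down; it is an elementary but essential calculus-of-one-variable check, and it is exactly here that the hypothesis $n \le t+1$ is used.
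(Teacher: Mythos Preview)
Your argument is correct, but it is genuinely different from the paper's. The paper proves the height bound by pure specialization: first kill enough rows to reduce to the case $n=t+1$, then specialize $Y$ to an explicit $2t\times(t+1)$ matrix $\bar{Y}$ (displayed in the proof) and check by hand that $K[\bar{Y}]/(\bar{Y}^{\tr}\Omega\bar{Y})$ is Artinian; counting the linear forms killed shows the original generators form a regular sequence. No Grassmannians, no stratification, no base change---just an explicit system of parameters.

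Your route via the incidence variety over the isotropic Grassmannian is more conceptual and explains \emph{why} the bound holds: the nullcone is covered by the image of a vector bundle of rank $rn$ over $IG(r,2t)$ with $r=\min\{t,n\}$, and the dimension formula $\dim IG(r,2t)=r(2t-r)-\binom{r}{2}$ does the rest. (Incidentally, you do not need the full stratification: since every isotropic subspace extends to one of dimension $\min\{t,n\}\le t$, the single incidence variety at top rank already surjects onto $\mathcal{V}(\frakP)$, so the ``key inequality'' you worry about reduces to the single evaluation $f(\min\{t,n\})=2tn-\binom{n}{2}$, which you have checked.) One small slip: you write $\min\{t,n\}=n$, but in the boundary case $n=t+1$ the minimum is $t$; your subsequent computation handles this correctly anyway. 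The paper's approach is more elementary and self-contained, and it immediately yields the sharper Corollary~\ref{corollary:pfaffian:ci} about adjoining entries from rows $1$ and $t+1$, which is what the paper actually needs downstream; your approach gives a cleaner picture but would require a separate argument for that refinement.
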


\begin{proof}
It suffices to prove that $K[Y]/\frakP$ is a complete intersection ring after specializing the entries of the rows indexed
\[
1,\ 2,\ \dots,\ t+1-n,\qquad t+1,\ t+2,\ \dots,\ 2t+1-n
\]
to zero, since this leaves the number of defining equations unchanged. We may hence assume that the matrix $Y$ has $2t-2(t+1-n)=2n-2$ rows, i.e., that $Y$ is size $2(n-1)\times n$, equivalently, size $2t\times (t+1)$. Next, specialize the entries of $Y$ to the corresponding entries of the matrix
\[
\bar{Y}\colonequals\begin{pmatrix}
0 & y_{12} & y_{13} & \cdots & y_{1t} & y_{1,t+1}\\
0 & 0 & y_{23} & \cdots & y_{2t} & y_{2,t+1}\\
0 & 0 & 0 & & y_{3t} & y_{3,t+1}\\
\vdots & \vdots & \vdots & \ddots & & \vdots\\
0 & 0 & 0 & \cdots & 0 & y_{t,t+1}\\
\hline
y_{12} & y_{13} & y_{14} & \cdots & y_{1n} & 0\\
y_{23} & y_{24} & y_{25} & & 0 & 0\\
y_{34} & y_{35} & y_{34} & & 0 & 0\\
\vdots & & \iddots & \iddots & \vdots & \vdots\\
y_{t,t+1} & 0 & 0 & \cdots & 0 & 0
\end{pmatrix}.
\]
This entails killing
\[
\binom{t+1}{2}+t(t+1)\ =\ 2t(t+1) -\binom{t+1}{2}
\]
linear forms in $K[Y]$. Since $K[\bar{Y}]/(\bar{Y}^\tr\Omega \bar{Y})$ is Artinian, it follows that $K[Y]/\frakP$ is a complete intersection ring.
\end{proof}

\begin{corollary}
\label{corollary:pfaffian:ci}
Let $Y$ be a~$2t\times n$ matrix of indeterminates over a field $K$, where $n\le t$. Set~$S\colonequals K[Y]$ and~$\frakP\colonequals (Y^\tr\Omega Y)S$. Let $\fraka$ be an ideal generated by $k$ distinct entries from rows $1$ and $t+1$ of the matrix $Y$. Then
\[
\dim S/(\frakP+\fraka)\ =\ 2nt-\binom{n}{2}-k,
\]
i.e., $S/(\frakP+\fraka)$ is a complete intersection ring.
\end{corollary}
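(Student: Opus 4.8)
The plan is to determine $\dim S/(\frakP+\fraka)$ exactly by trapping it between two matching bounds: the lower one is automatic, and the upper one reduces to the complete intersection property already proved in Theorem~\ref{theorem:pfaffian:ci}. Once the dimension is known, the complete intersection conclusion for $S/(\frakP+\fraka)$ is formal.

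For the lower bound, observe that $\frakP+\fraka$ is generated by the $\binom{n}{2}$ entries above the diagonal of the alternating matrix $Y^\tr\Omega Y$ together with the $k$ indeterminates generating $\fraka$, hence by $\binom{n}{2}+k$ elements, so Krull's height theorem gives $\dim S/(\frakP+\fraka)\ge 2nt-\binom{n}{2}-k$. For the upper bound I would enlarge $\fraka$ to the ideal $\fraka_{\max}$ generated by all $2n$ entries in rows $1$ and $t+1$ of $Y$. Since $\fraka_{\max}$ differs from $\fraka$ by the adjunction of $2n-k$ further such entries, one has
\[
\dim S/(\frakP+\fraka)\ \le\ \dim S/(\frakP+\fraka_{\max})+(2n-k).
\]
The crux is to evaluate the right-hand term: modding out $\fraka_{\max}$ sets rows $1$ and $t+1$ of $Y$ to zero, leaving the polynomial ring $K[\tilde Y]$ on the remaining $2(t-1)$ rows, and since $\Omega$ pairs row $\ell$ with row $t+\ell$, the image of $Y^\tr\Omega Y$ in $K[\tilde Y]$ is precisely $\tilde Y^\tr\tilde\Omega\tilde Y$ for the size $2(t-1)$ standard symplectic matrix $\tilde\Omega$. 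Because $n\le t=(t-1)+1$, Theorem~\ref{theorem:pfaffian:ci} applied with $t$ replaced by $t-1$ shows that $K[\tilde Y]/(\tilde Y^\tr\tilde\Omega\tilde Y)$ is a complete intersection ring, so $\dim S/(\frakP+\fraka_{\max})=2(t-1)n-\binom{n}{2}$; substituting into the displayed inequality gives $\dim S/(\frakP+\fraka)\le 2nt-\binom{n}{2}-k$, and hence equality with the lower bound.

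With the dimension pinned down, the rest is automatic: $\dim S/(\frakP+\fraka)$ equals $\dim S$ minus the number $\binom{n}{2}+k$ of the exhibited generators of $\frakP+\fraka$, so in the Cohen-Macaulay ring $S$ those generators form a regular sequence, and as $S$ is a polynomial ring, $S/(\frakP+\fraka)$ is a complete intersection. I do not expect a genuine obstacle; the one step that needs care is the identification in the previous paragraph, namely that killing rows $1$ and $t+1$ turns $(Y^\tr\Omega Y)$ into the symplectic invariant ideal attached to a $2(t-1)\times n$ matrix, so that Theorem~\ref{theorem:pfaffian:ci} applies verbatim. It is also reassuring to note that $\fraka$ meets only the $\ell=1$ summand of each entry of $Y^\tr\Omega Y$ and so never annihilates a generator of $\frakP$, though this observation is not needed above. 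Finally, the degenerate case $t=1$, which forces $n=1$ and $\frakP=0$, is immediate.
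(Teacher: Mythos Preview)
Your argument is correct and follows essentially the same route as the paper: both hinge on the fact that setting rows $1$ and $t+1$ of $Y$ to zero leaves the Pfaffian nullcone for a $2(t-1)\times n$ matrix, to which Theorem~\ref{theorem:pfaffian:ci} applies. The paper compresses this to the one-line observation that the generators of $\fraka$ form part of a system of parameters for $S/\frakP$, pointing back to the specialization inside the proof of Theorem~\ref{theorem:pfaffian:ci}; you instead make the reduction explicit and invoke the \emph{statement} of Theorem~\ref{theorem:pfaffian:ci} with $t$ replaced by $t-1$, sandwiching the dimension between matching bounds. Your version is more self-contained, while the paper's is terser; mathematically they are the same argument.
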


\begin{proof}
As seen in the previous proof, the generators of the ideal $\fraka$ form part of a system of parameters for $S/\frakP$.
\end{proof}

The following lemma will be used to prove the irreducibility of certain algebraic sets of the form $V(\frakP+\fraka)$ in Proposition~\ref{proposition:irreducible:pfaffian}:

\begin{lemma}
\label{lemma:pfaffian:nzd}
Let $Y$ be a $2t\times t$ matrix of indeterminates over a field $K$. Set~$S\colonequals K[Y]$ and
\[
I\colonequals (Y^\tr\Omega Y)S + (y_{12},\dots,y_{1t},\ y_{t+1,1},\dots,y_{t+1,t})S.
\]
Let $\Delta$ be the upper $t \times t$ minor of $Y$. Then $\Delta$ is a nonzerodivisor on $S/I$.
\end{lemma}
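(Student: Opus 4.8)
The plan is to kill the linear generators of $I$, reduce to a nonzerodivisor statement for a minor on the nullcone of a smaller symplectic action, and settle that via the irreducibility of that nullcone together with one explicit point.

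Since $y_{12},\dots,y_{1t}$ and $y_{t+1,1},\dots,y_{t+1,t}$ are among the generators of $I$, I pass to $K[Y]/(y_{12},\dots,y_{t+1,t})$; there the first row of $Y$ becomes $(y_{11},0,\dots,0)$, the $(t+1)$-st row becomes $0$, and a direct check shows the entries of $Y^\tr\Omega Y$ descend precisely to the entries of $Y'^\tr\Omega'Y'$, where $Y'$ is the generic $2(t-1)\times t$ matrix formed by the surviving rows $2,\dots,t,t+2,\dots,2t$ and $\Omega'$ is the size $2(t-1)$ standard symplectic matrix (the $k=1$ summand of each entry involves a killed variable). Thus $S/I\cong A[y_{11}]$ with $A\colonequals K[Y']/(Y'^\tr\Omega'Y')$, and expanding $\Delta$, the determinant of the submatrix of $Y$ on its first $t$ rows, along its now sparse first row, the image of $\Delta$ in $S/I$ is $y_{11}\Delta'$, where $\Delta'$ is the size $t-1$ minor of $Y'$ on rows $1,\dots,t-1$ and columns $2,\dots,t$. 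As $A[y_{11}]$ is a faithfully flat $A$-algebra and $y_{11}$ is a polynomial variable, $\Delta=y_{11}\Delta'$ is a nonzerodivisor on $S/I$ provided $\Delta'$ is a nonzerodivisor on $A$; and by faithful flatness of $A\to A\otimes_K\overline{K}$ I may now assume $K$ algebraically closed.

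By Theorem~\ref{theorem:pfaffian:ci}, applied with $t-1$ and $t$ in the roles of $t$ and $n$ (the hypothesis $n\le t+1$ becoming $t\le (t-1)+1$), the ring $A$ is a complete intersection, hence Cohen--Macaulay and equidimensional, of dimension $2(t-1)t-\binom{t}{2}=3\binom{t}{2}$. Consequently $\Delta'$ is a nonzerodivisor on $A$ if and only if it lies in no minimal prime of $A$, i.e. if and only if the nullcone $\mathcal N\colonequals V(Y'^\tr\Omega'Y')$ --- the set of $2(t-1)\times t$ matrices over $K$ whose columns span an isotropic subspace for $\Omega'$ --- is not contained in the hypersurface $V(\Delta')$. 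I claim $\mathcal N$ is irreducible: every isotropic subspace lies in a Lagrangian, so $\mathcal N$ is the image, under the second projection, of the incidence set $\{(W,M): W\text{ a Lagrangian of }K^{2(t-1)},\ \text{the columns of }M\text{ lie in }W\}$, which is the total space of a rank $(t-1)t$ vector bundle over the irreducible Lagrangian Grassmannian $\mathrm{LG}(t-1,2(t-1))$, hence irreducible. Finally, let $M_0$ have first column $0$ and $j$-th column the standard basis vector $e_{j-1}$ for $2\le j\le t$; its columns span $\operatorname{span}(e_1,\dots,e_{t-1})$, a Lagrangian for $\Omega'$, so $M_0\in\mathcal N$, while the submatrix of $M_0$ on rows $1,\dots,t-1$ and columns $2,\dots,t$ is the identity matrix, so $\Delta'(M_0)=1$. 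Hence $\mathcal N\not\subseteq V(\Delta')$, and since $\mathcal N$ is irreducible, $\Delta'$ avoids its unique minimal prime, so $\Delta'$ is a nonzerodivisor on $A$, completing the proof.

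The step requiring the most care is the irreducibility of $\mathcal N$, which should be established directly, as above, rather than by citing Proposition~\ref{proposition:irreducible:pfaffian}, since that proposition is downstream of the present lemma; the remaining ingredients --- the descent $S/I\cong A[y_{11}]$, the complete intersection input from Theorem~\ref{theorem:pfaffian:ci}, and the single matrix $M_0$ --- are routine.
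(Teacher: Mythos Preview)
Your proof is correct and takes a genuinely different route from the paper's.

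The paper argues directly on $S/I$: since $S/I$ is a complete intersection (Corollary~\ref{corollary:pfaffian:ci}), it suffices to show $\Delta$ lies in no minimal prime of $I$. The paper then uses the action of a copy of $\Sp_{2t-2}(K)$ fixing rows $1$ and $t+1$; connectedness forces this group to stabilize each minimal prime $P$, and an intricate induction---tracking how $G$-translates of $\Delta$ decompose via multilinearity and a Pl\"ucker relation---shows that if $\Delta\in P$ then every size $t$ minor of $Y$ involving row $1$ lies in $P$. A final dimension count on the resulting smaller locus yields the contradiction. Your approach instead eliminates the linear generators at the outset, recognizing $S/I$ as $A[y_{11}]$ for the smaller nullcone $A=K[Y']/(Y'^\tr\Omega'Y')$ with $Y'$ of size $2(t-1)\times t$, and reduces to showing a single $(t-1)\times(t-1)$ minor $\Delta'$ is a nonzerodivisor on $A$. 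You then establish irreducibility of $V(Y'^\tr\Omega'Y')$ by exhibiting it as the image of a vector bundle over the Lagrangian Grassmannian, and finish with one explicit point.

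What each buys: the paper's argument is self-contained within the principal radical system machinery and never leaves the polynomial ring, at the cost of the delicate $\ell(\alpha)$ induction and Pl\"ucker identity. Your argument is shorter and more transparent geometrically; the key insight that the linear generators collapse the problem to the nullcone one size down is clean, and the Lagrangian Grassmannian fibration gives irreducibility in one stroke. You are right to flag that Proposition~\ref{proposition:irreducible:pfaffian} cannot be cited here, since it relies on the present lemma; your direct irreducibility argument avoids that circularity. One small point worth making explicit in your write-up is that the projection from the incidence variety to affine space has closed image because the Lagrangian Grassmannian is projective (hence the projection is proper), though in fact you only need that the image, being irreducible and equal to the closed set $\mathcal N$ set-theoretically, forces $\mathcal N$ to be irreducible.
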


\begin{proof}
It suffices to consider the case where the field $K$ is algebraically closed. Since $S/I$ is a complete intersection ring by the corollary above, we need to show that $\Delta$ does not belong to any minimal prime of $I$.

Let $G$ be the subgroup of $\Sp_{2t}(K)$ consisting of matrices $M\colonequals(m_{ij})$ with
\begin{alignat*}3
m_{11} &= 1 &&= m_{t+1,t+1} && \\
m_{1i} &= 0 &&= m_{i1} && \quad \text{for}\ i\neq 1\\
m_{t+1,i} &= 0 &&= m_{i,t+1} && \quad \text{for} \ i\neq t+1.
\end{alignat*}
Deleting rows and columns $1$ and $t+1$ shows that $G$ is isomorphic to $\Sp_{2t-2}(K)$, and is hence a connected algebraic group. The action of $G$ on $S$ via $M\colon Y\mapsto MY$ induces an action on $S/I$, and thus on the (necessarily finite) set of minimal primes of $S/I$. Since $G$ is connected the action must be trivial, i.e.,~$G$ stabilizes each minimal prime of $I$.

Suppose a minimal prime $P$ of $I$ contains $\Delta$. Using the fact that $G$ stabilizes $P$, we shall first show that $P$ contains each maximal minor of $Y$ that involves the first row. Since row~$t+1$ of $Y$ is contained in $I$, hence in $P$, we need only consider maximal minors of $Y$ that involve the first row, and not row $t+1$. We use $M\cdot\Delta$ to denote the image of $\Delta$ under an element $M$ of $G$.

Let $\alpha$ be a size $t$ subset of the row indices $\{1,\dots,2t\}$ such that $1\in\alpha$ and $t+1\notin\alpha$. We use $Y_\alpha$ for the square submatrix with rows $\alpha$, and set $\ell(\alpha)$ to be the number of indices~$a\in \alpha$ such that $a\le t$ and $a+t\in \alpha$. The proof that $\det(Y_\alpha)\in P$ is by induction on $\ell(\alpha)$.

For the case $\ell(\alpha)=0$, proceed by induction on the number $w$ of $a\in \alpha$ with $a>t$. When~$w=0$, one has $\det(Y_\alpha)=\Delta$, which is an element of $P$. For the inductive step, consider the $2t\times 2t$ matrix $M$ with
\[
M_{ij} \colonequals\begin{cases}
1 &\text{if }\ i=j,\ \text{ or if }\ j=i+t \in \alpha,\\
0 &\text{otherwise}.
\end{cases}
\]
Observe that $M\in G$, and that the matrix $MY$ is obtained from $Y$ by the row operations where row $i+t$ is added to row~$i$ whenever $i\le t$ and $i+t\in\alpha$. It follows that $M\cdot\Delta$ is the determinant of the $t\times t$ matrix whose $i$-th row is the sum of rows $i$ and $i+t$ of $Y$ if~$i\le t$ and~$i+t\in\alpha$, and is row $i$ of $Y$ otherwise. By the linearity of determinants along a row,~$M\cdot\Delta$ is the sum of $t\times t$ minors of~$Y$, each of which is indexed by a set of rows $\beta$ with $\ell(\beta)=0$. One of these is~$\det(Y_\alpha)$, while the others have fewer indices greater than $t$. Using $M\cdot\Delta\in P$ and the inductive hypothesis, it follows that $\det(Y_\alpha)\in P$, settling the case $\ell(\alpha)=0$.

Next, fix $\alpha$ with $\ell(\alpha)>0$. Let $i,j\in\{1,\dots,t\}$ be such that $i, i+t\in \alpha$ and $j,j+t\notin \alpha$; such a $j$ exists by cardinality reasons. Let $\alpha' = \alpha \smallsetminus \{i,i+t\}$. Observe that each of
\[
\ell(\alpha'\cup\{i,j\}),\quad \ell(\alpha'\cup\{i,j+t\}),\quad \ell(\alpha'\cup\{i+t,j\}),\quad \ell(\alpha'\cup\{i+t,j+t\})
\]
is strictly less than $\ell(\alpha)$. Let $M$ be the $2t \times 2t$ matrix with 
\[
M_{ab} \colonequals\begin{cases}
1 &\text{if }\ a=b,\ \text{ or }\ (a,b) = (i,j+t),\ \text{ or }\ (a,b) = (j,i+t),\\
0 &\text{otherwise}.
\end{cases}
\]
Note that $M\in G$, and that the matrix $MY$ is obtained from $Y$ by row operations where the~$(j+t)$-th row is added to the $i$-th row, and the $(i+t)$-th row is added to the $j$-th row. Hence, up to choices of signs, $M\cdot\det(Y_{\alpha'\cup\{i,j\}})$ is the sum of 
\[
\det(Y_{\alpha'\cup\{i,j\}}),\quad
\det(Y_{\alpha'\cup\{i,i+t\}}),\quad
\det(Y_{\alpha' \cup \{j,j+t\}}),\quad
\det(Y_{\alpha' \cup \{i+t,j+t\}}).
\]
By the inductive hypothesis, $\det(Y_{\alpha'\cup\{i,j\}})$ and $\det(Y_{\alpha'\cup\{i+t,j+t\}}$ are elements of the prime~$P$, as is $\det(Y_{\alpha'\cup\{i,j\}})$ and hence $M\cdot\det(Y_{\alpha'\cup\{i,j\}})$. It follows that, with a sign choice, one of 
\begin{equation}
\label{equation:pfaffian:minor:1}
\det(Y_{\alpha'\cup\{i,i+t\}})\ \pm\ \det(Y_{\alpha'\cup\{j,j+t\}})
\end{equation}
is an element of $P$. We claim that there exists a Pl\"ucker relation in $K[Y]$ of the form
\begin{multline}
\label{equation:plucker}
\det(Y_{\alpha'\cup\{i,i+t\}}) \det(Y_{\alpha'\cup\{j,j+t\}})
\ \pm\ \det(Y_{\alpha'\cup\{i,j\}}) \det(Y_{\alpha'\cup\{i+t,j+t\}})\\
\pm\ \det(Y_{\alpha'\cup\{i,j+t\}}) \det(Y_{\alpha'\cup\{i+t,j\}})\ =\ 0.
\end{multline}
This may be verified, for example, by passing to a dense open subset of matrices where the rows $\alpha'\cup\{i,i+t\}$ form a basis for $K^t$, and multiplying on the right by an invertible matrix so as reduce to the case where these rows are the standard basis for $K^t$. The equality is now readily checked.

Since the other terms in~\eqref{equation:plucker} belong to $P$ by the induction hypothesis, one obtains
\begin{equation}
\label{equation:pfaffian:minor:2}
\det(Y_{\alpha'\cup\{i,i+t\}})\det(Y_{\alpha'\cup\{j,j+t\}})\ \in\ P.
\end{equation}
Combining \eqref{equation:pfaffian:minor:1} and \eqref{equation:pfaffian:minor:2}, bearing in mind that $P$ is prime, it follows that 
\[
\det(Y_\alpha)\ =\ \det(Y_{\alpha'\cup\{i,i+t\}})\ \in\ P,
\]
completing the proof that $P$ contains each $t\times t$ minor of $Y$ that involves the first row.

If $P$ contains $y_{11}$, Corollary~\ref{corollary:pfaffian:ci} gives a contradiction; it follows that the prime ideal $P$ must contain each size $t-1$ minor of the last $t-1$ columns of $Y$. 

Let $Y'$ be the $2(t-1)\times t$ submatrix obtained by deleting rows $1$ and $t+1$ of $Y$, and $\Omega'$ be the size $2t-2$ standard symplectic block matrix. Set $I'$ to be the ideal of $K[Y']$ generated by the entries of $Y'^\tr\Omega'Y'$ along with the size $t-1$ minors of the last~$t-1$ columns of $Y'$. On an open dense subset of $V(I')$, the last column belongs to the span of colums $2,3,\dots,t-1$. Since the dimension of the Pfaffian nullcone corresponding to a $2(t-1)\times(t-1)$ matrix is 
\[
2(t-1)^2-\binom{t-1}{2}
\]
by Corollary~\ref{corollary:pfaffian:ci}, it follows that
\[
\dim V(I')\ \le\ 2(t-1)^2-\binom{t-1}{2}+(t-2).
\]
Accounting for the matrix entry $y_{11}$, this implies
\[
\dim V(P)\ \le\ 2(t-1)^2-\binom{t-1}{2}+(t-2)+1\ =\ 2t^2-\binom{t}{2}-2t.
\]
But then
\[
\dim V(P)\ <\ \dim V(I)\ =\ 2t^2-\binom{t}{2}-(2t-1),
\]
where the equality uses, again, Corollary~\ref{corollary:pfaffian:ci}. This is not possible since $P$ is a minimal prime of $I$.
\end{proof}

The following proposition serves as a building block in the proof of Theorem~\ref{theorem:pfaffian:nullcone}; the primality of $I_a$ or $I'_a$ does not follow immediately from the proof here, in view of the initial reduction step, though it will be obtained later as part of Theorem~\ref{theorem:pfaffian:nullcone}.

\begin{proposition}
\label{proposition:irreducible:pfaffian}
Let $Y$ be a~$2t\times n$ matrix of indeterminates over an algebraically closed field~$K$, where $n\le t$. Set~$S\colonequals K[Y]$ and~$\frakP\colonequals (Y^\tr\Omega Y)S$. For $a$ with $0\le a\le n-1$, set
\[
I_a \colonequals \frakP + (y_{11},\dots,y_{1a})
\quad\text{ and }\quad
I'_a \colonequals \frakP + (y_{11},\dots,y_{1n}, y_{t+1,1},\dots,y_{t+1,a}).
\]
Then the algebraic sets $V(I_a)$ and $V(I'_a)$ are irreducible.
\end{proposition}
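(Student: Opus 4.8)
The plan is to realize $V(I_a)$ and $V(I'_a)$ as images of irreducible incidence varieties built from isotropic subspaces of $K^{2t}$, reducing, through a short chain of fibrations with irreducible equidimensional fibres, to the irreducibility of an isotropic Grassmannian of a (possibly degenerate) symplectic space. Identify a $2t\times n$ matrix $M$ with the sequence $v_1,\dots,v_n$ of its columns, write $B$ for the symplectic form $(u,v)\mapsto u^\tr\Omega v$, and let $L_1,L_2$ be the coordinate functionals $v\mapsto v_1$ and $v\mapsto v_{t+1}$; since $(Y^\tr\Omega Y)_{ij}=B(v_i,v_j)$ and $y_{1j}=L_1(v_j)$, $y_{t+1,j}=L_2(v_j)$, as point sets $V(I_a)=\{M:\ v_1,\dots,v_n\text{ span an isotropic subspace and }L_1(v_j)=0\text{ for }j\le a\}$ and $V(I'_a)=\{M:\ v_1,\dots,v_n\text{ span an isotropic subspace},\ L_1(v_j)=0\ \forall j,\ L_2(v_j)=0\text{ for }j\le a\}$.

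For $V(I_a)$ set
\[
\mathcal{Z}\ \colonequals\ \bigl\{(U,W,M):\ U\subseteq W\subseteq K^{2t},\ \dim U=a,\ \dim W=n,\ W\text{ isotropic},\ U\subseteq\ker L_1,\ v_j\in U\ (j\le a),\ v_j\in W\ (\forall j)\bigr\},
\]
a closed subvariety of $\mathrm{Gr}(a,K^{2t})\times\mathrm{Gr}(n,K^{2t})\times\AA^{2tn}$, with projection $\pi\colon\mathcal{Z}\to\AA^{2tn}$. Clearly $\pi(\mathcal{Z})\subseteq V(I_a)$, and conversely, given $M\in V(I_a)$ one extends the column span $W_0$ to an isotropic $n$-plane $W$ (possible since $\dim W_0\le n\le t$) and then extends $\langle v_1,\dots,v_a\rangle$ to an $a$-plane $U\subseteq W\cap\ker L_1$ (possible since $\dim(W\cap\ker L_1)\ge n-1\ge a$), so that $(U,W,M)\in\mathcal{Z}$; alternatively this is a direct application of Lemma~\ref{lemma:functional:symplectic} with $L=L_1$, $V_j=\langle v_1,\dots,v_j\rangle$, and $k=a$. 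Thus $V(I_a)=\pi(\mathcal{Z})$, so it suffices to show $\mathcal{Z}$ is irreducible. Now run two fibrations. Forgetting $M$ maps $\mathcal{Z}$ onto $B\colonequals\{(U,W):U\subseteq W,\ \dim U=a,\ \dim W=n,\ W\text{ isotropic},\ U\subseteq\ker L_1\}$ with fibre over $(U,W)$ the vector space $U^{\oplus a}\oplus W^{\oplus(n-a)}$, of dimension $a^2+n(n-a)$; hence $\mathcal{Z}$ is irreducible iff $B$ is. Forgetting $W$ maps $B$ onto $X\colonequals\{U:\dim U=a,\ U\text{ isotropic},\ U\subseteq\ker L_1\}$ with fibre over $U$ the isotropic Grassmannian of $(n-a)$-planes in the \emph{nondegenerate} symplectic space $U^\perp/U$, which is an irreducible homogeneous space of dimension independent of $U$. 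By the standard fact that a surjective morphism onto an irreducible variety all of whose fibres are irreducible of one fixed dimension has irreducible source, $\mathcal{Z}$ is irreducible once $X$ is.

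For $V(I'_a)$ the argument is the same, with $\mathcal{Z}',B',X'$ cut out by the extra conditions $W\subseteq\ker L_1$ and $U\subseteq\ker L_2$; the extension argument now reads: extend $W_0$ to an isotropic $n$-plane inside $\ker L_1$ (possible as $\ker L_1$ contains isotropic subspaces of dimension $t\ge n$), then extend $\langle v_1,\dots,v_a\rangle$ to an $a$-plane inside $W\cap\ker L_2$. Here $X'=\{U:\dim U=a,\ U\text{ isotropic},\ U\subseteq\ker L_1\cap\ker L_2\}$ and the fibre of $B'\to X'$ over $U$ is the $(n-a)$-dimensional isotropic Grassmannian of $(U^\perp\cap\ker L_1)/U$. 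Since $\ker L_1\cap\ker L_2$ is the symplectic complement of the hyperbolic plane $\langle e_1,e_{t+1}\rangle$, it is nondegenerate of dimension $2(t-1)$, so $X'$ is an honest isotropic Grassmannian, hence irreducible; and since $e_{t+1}\notin\ker L_2\supseteq U$, the space $(U^\perp\cap\ker L_1)/U$ has dimension $2t-2a-1$ with one-dimensional radical for \emph{every} $U\in X'$, so its isotropic Grassmannians are of constant dimension. Thus both cases come down to the following one genuinely nontrivial point.

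\emph{Main obstacle.} For a symplectic space $D$ with one-dimensional radical $Kw$, the variety $X_D$ of its isotropic $k$-planes is irreducible; this covers $X$ (with $D=\ker L_1$, $k=a\le t-1$) and the degenerate fibres above. This needs argument because $X_D$ is not homogeneous. Stratify $X_D=X_D^-\sqcup X_D^+$ according as $w\notin U$ or $w\in U$. Composing with $D\to D/Kw$, a nondegenerate symplectic space, identifies $X_D^-$ with a rank-$k$ affine bundle over the isotropic Grassmannian of $k$-planes in $D/Kw$ (all lifts of an isotropic plane are isotropic because $w$ lies in the radical of $B|_D$), and identifies $X_D^+$ with the isotropic Grassmannian of $(k-1)$-planes in $D/Kw$; both bases are irreducible, so both strata are irreducible, and $X_D^-\neq\varnothing$ unless $k$ equals the maximal dimension of an isotropic subspace of $D$, in which case $X_D=X_D^+$ is a Lagrangian Grassmannian of $D/Kw$ and we are done. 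Finally $X_D^+\subseteq\overline{X_D^-}$: given $U_0=U_0'\oplus Kw\in X_D^+$ with $U_0'$ isotropic of dimension $k-1$ and $w\notin U_0'$, choose $z\in D$ with $B(z,U_0')=0$ and $z\notin U_0$ (possible since $\{z\in D: B(z,U_0')=0\}$ has dimension $\ge\dim D-(k-1)>k$ in the cases arising), and set $U_s\colonequals U_0'+K(w+sz)$; one checks that each $U_s$ is an isotropic $k$-plane in $D$, that $w\notin U_s$ for $s\neq0$, and that $U_s\to U_0$ as $s\to0$ in $\mathrm{Gr}(k,D)$. Hence $X_D=\overline{X_D^-}$ is irreducible, which closes the argument for both $V(I_a)$ and $V(I'_a)$.
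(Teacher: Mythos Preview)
Your argument is correct and takes a genuinely different route from the paper's. The paper proceeds algebraically: it first reduces to $n=t$ by projecting onto the first $n$ columns, then invokes Corollary~\ref{corollary:pfaffian:ci} and the rather delicate Lemma~\ref{lemma:pfaffian:nzd} (whose proof uses the $\Sp_{2t-2}$-action on minimal primes together with Pl\"ucker relations) to show that the upper $t\times t$ minor~$\Delta$ is a nonzerodivisor modulo the relevant ideal. Inverting~$\Delta$ and changing coordinates to $Z=Y_2Y_1^{-1}$ then exhibits the localized quotient as an explicit polynomial ring modulo linear forms, hence a domain; the nonzerodivisor statement transports this back to a unique minimal prime. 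Your approach instead realizes $V(I_a)$ and $V(I'_a)$ as images of incidence varieties in products of Grassmannians, and reduces their irreducibility via two fibrations to the irreducibility of isotropic Grassmannians in symplectic spaces with at most one-dimensional radical, which you handle by a direct stratification-and-closure argument.

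What each buys: your method bypasses Lemma~\ref{lemma:pfaffian:nzd} entirely, replacing it with soft geometric input (fibre bundles with irreducible fibres over irreducible bases), and treats all $n\le t$ uniformly without the reduction to $n=t$. The paper's method, once the nonzerodivisor is in hand, gives an explicit birational model of the variety and hence more: one reads off the generic structure (e.g.\ that $S_\Delta/I_a$ is regular). Two small points worth tightening in your write-up: the ``standard fact'' about irreducibility of the source of a surjection with irreducible equidimensional fibres is safest stated for proper or bundle morphisms, and in your setting all maps are either vector bundles or relative isotropic Grassmannians, so this should be said; and the line ``possible as $\ker L_1$ contains isotropic subspaces of dimension $t\ge n$'' does not by itself guarantee that a given isotropic subspace extends---the clean justification is to pass to $\ker L_1/Ke_{t+1}$, which is nondegenerate, extend there, and pull back.
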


\begin{proof}
Since the projection map onto the first $n$ columns provides a surjection of algebraic sets, it suffices to prove each result in the case $n=t$. Let $\Delta$ be the upper $t\times t$ minor of $Y$.

We first consider $I_a$. In this case, Corollary~\ref{corollary:pfaffian:ci} and Lemma~\ref{lemma:pfaffian:nzd}---after permuting columns---show that $\Delta$ is a nonzerodivisor modulo $I_a$. Write $Y$ as
\[
\begin{pmatrix}Y_1\\ Y_2\end{pmatrix},
\]
where $Y_1$ and $Y_2$ are size $t\times t$. Since $Y_1$ is invertible over the ring~$S_\Delta$, one has $S_\Delta=K[Y_1,\ Z]_\Delta$, where the entries of $Y_1$ and $Z\colonequals Y_2Y_1^{-1}$ are algebraically independent over $K$. Note that
\[
YY_1^{-1}\ =\ \begin{pmatrix}\1\\ Z\end{pmatrix},
\]
so the ideal $(Y^\tr\Omega Y)S_\Delta$ is generated by the entries of 
\[
(YY_1^{-1})^\tr\Omega(YY_1^{-1})\ =\ 
\begin{pmatrix}\1 & Z^\tr\end{pmatrix}
\begin{pmatrix} 0 & \1 \\ -\1 & 0 \end{pmatrix}
\begin{pmatrix}\1\\ Z\end{pmatrix}\ =\
Z-Z^\tr.
\]
It follows that
\[
I_a S_\Delta\ =\ (Z-Z^\tr)S_\Delta+(y_{11},\dots,y_{1a})S_\Delta.
\]
Since $I_a S_\Delta$ is generated by linear forms belonging to the polynomial ring $K[Y_1,\ Z]$, it is a prime ideal of $S_\Delta$; as $\Delta$ is a nonzerodivisor modulo $I_a$, there is a bijection between the minimal primes of $I_a$ and those of $I_a S_\Delta$. It follows that $I_a$ has a unique minimal prime, i.e., that $V(I_a)$ is irreducible.

In the case of $I'_a$, working again with $n=t$, the ring $S/I'_a$ is a polynomial extension of
\[
K[Y']/(Y'^\tr\Omega' Y'),
\]
where $Y'$ is the $(2t-2)\times t$ matrix of indeterminates obtained by deleting rows $1$ and $t+1$ of $Y$, and $S'\colonequals K[Y']$, and $\Omega'$ is the size $2t-2$ standard symplectic block matrix. It suffices to prove that the ring $S'/(Y'^\tr\Omega' Y')$ has a unique minimal prime. Let $\Delta'$ be the upper left size $t-1$ minor of $Y'$; Lemma~\ref{lemma:pfaffian:nzd} implies that $\Delta'$ is a nonzerodivisor on $S'/(Y'^\tr\Omega' Y')$. Writing the matrix $Y'$ as
\[
Y'\ =\ \begin{pmatrix}Y_1 & W_1\\ Y_2 & W_2\end{pmatrix},
\]
where $Y_1$ and $Y_2$ are square matrices of size $t-1$, one has
\[
\begin{pmatrix}Y_1 & W_1\\ Y_2 & W_2\end{pmatrix} \begin{pmatrix}Y_1^{-1} & -Y_1^{-1}W_1\\ 0 & 1\end{pmatrix}\ =\
\begin{pmatrix}\1 & 0\\ Y_2Y_1^{-1} & W_2-Y_2Y_1^{-1}W_1\end{pmatrix}.
\]
The entries of $Y_1$, $W_1$, $Z_1\colonequals Y_2Y_1^{-1}$ and $Z_2\colonequals W_2-Y_2Y_1^{-1}W_1$ are algebraically independent over $K$, and $S'_{\Delta'}$ may be viewed as $K[Y_1,\ W_1,\ Z_1,\ Z_2]_{\Delta'}$. Since
\[
\begin{pmatrix}\1 & Z_1^\tr\\ 0 & Z_2^\tr\end{pmatrix}
\begin{pmatrix} 0 & \1 \\ -\1 & 0\end{pmatrix}
\begin{pmatrix}\1 & 0\\ Z_1 & Z_2\end{pmatrix}\ =\
\begin{pmatrix}Z_1-Z_1^\tr & Z_2\\ -Z_2^\tr & 0\end{pmatrix},
\]
it follows that
\[
S'_{\Delta'}/(Y'^\tr\Omega' Y')_{\Delta'}\ =\ K[Y_1,\ W_1,\ Z_1,\ Z_2]_{\Delta'}/(Z_1-Z_1^\tr,\ Z_2),
\]
and is hence a domain; in particular, it has a unique minimal prime.
\end{proof}

\subsection{Nullcones of Pfaffian rings are Cohen-Macaulay}

We now set up the principal radical system needed to study the nullcones of Pfaffian rings. Let $Y$ be a~$2t\times n$ matrix of indeterminates over a field $K$, and set $\frakP$ to be the ideal generated by the entries of the matrix $Y^\tr\Omega Y$. Let
\[
\sigma\colonequals(s_0,s_1,s_2,\dots,s_m)
\]
be a sequence of integers with $0\le s_k\le n$ for each $k$, and $s_m=n$. Set
\[
I_\sigma\colonequals\frakP + I_1\big(Y|_{s_0}\big) + I_2\big(Y|_{s_1}\big) + I_3\big(Y|_{s_2}\big) + \dots + I_{m+1}\big(Y|_{s_m}\big),
\]
where $I_{k+1}\big(Y|_{s_k}\big)$ denotes the ideal generated by the size $k+1$ minors of the submatrix consisting of the first $s_k$ columns of $Y$. 

In studying $K[Y]/I_\sigma$ there is little loss of generality in assuming $s_0=0$, since one may replace $Y$ by a smaller matrix; in light of Lemma~\ref{lemma:pfaffian:rank}, one may also stipulate~$m\le t$. Note that for positive integers $j$ and $k$, one has
\[
I_{k+1}\big(Y|_{j+1}\big)\ \subseteq\ I_k\big(Y|_j\big),
\]
so one may restrict to $\sigma$ where the entries are strictly increasing. We say $\sigma$ is \emph{standard} if
\[
0=s_0 < s_1 < s_2 < \dots < s_m=n\quad\text{ and }\quad m\le t.
\]
The ideal $\frakP$ indeed equals $I_\sigma$ for a choice of $\sigma$ that is standard: take
\[
\sigma=\begin{cases}
(0,1,2,\dots,n-1,n) & \text{ if }\ n\le t,\\
(0,1,2,\dots,t-1,n) & \text{ if }\ n>t.\\
\end{cases}
\]
For integers $a$ with $0\le a\le n$, set
\[
J_a\colonequals (y_{11},\ y_{12},\ \dots,\ y_{1a})
\quad\text{ and }\quad 
J'_a\colonequals (y_{11},\ y_{12},\ \dots,\ y_{1n},\ \ y_{t+1,1},\ y_{t+1,2},\ \dots,\ y_{t+1,a}).
\]
Note that if $\sigma\colonequals(s_0,s_1,s_2,\dots,s_m)$ is standard, $m=t$, and $s_{m-1}<a<s_m$, then
\[
I_\sigma+J'_a\ =\ I_{\sigma'}+J'_a\qquad \text{ for }\ \sigma'\colonequals(s_0,s_1,s_2,\dots,s_{m-2},a,s_m),
\]
since rows $1$ and $t+1$ of $Y|_a$ are zero modulo $J'_a$, so Lemma~\ref{lemma:pfaffian:rank} gives
\[
I_t(Y|_a)\ \subseteq\ (Y^\tr\Omega Y) + J'_a.
\]

With this notation, we prove:

\begin{theorem}
\label{theorem:pfaffian:nullcone}
Let $Y$ be a~$2t\times n$ matrix of indeterminates over a field $K$, and set $S\colonequals K[Y]$. Let $\sigma\colonequals(s_0,s_1,s_2,\dots,s_m)$ be a sequence of integers with $0\le s_k\le n$ for each $k$, and $s_m=n$. Let $a$ be an integer with $0\le a\le n$. Then:
\begin{enumerate}[\quad \rm(1)]
\item\label{theorem:pfaffian:nullcone:1} If $\sigma$ is standard, then the algebraic sets $V(I_\sigma+J_{s_k})$ and $V(I_\sigma+J'_{s_k})$ are irreducible for each $k$ with $0\le k\le m$.

\item\label{theorem:pfaffian:nullcone:2} The ideals $I_\sigma+J_a$ and $I_\sigma+J'_a$ are radical; if $\sigma$ is standard, then the ideals $I_\sigma+J_{s_k}$ and $I_\sigma+J'_{s_k}$ are prime for each $k$ with $0\le k\le m$.

\item\label{theorem:pfaffian:nullcone:3} Suppose $\sigma$ is standard. If $a=s_k$ for some $k$ with $0\le k\le m$, then $S/(I_\sigma+J_a)$ is a Cohen-Macaulay integral domain of dimension
\[
m(2t+n-m)-k-\sum_{j=1}^{m-1}s_j.
\]
If $a=s_k$ \,for some $k$ with $0\le k\le m-1$, then $S/(I_\sigma+J'_a)$ is a Cohen-Macaulay integral domain of dimension
\[
m(2t+n-m-1)-k-\sum_{j=1}^{m-1}s_j.
\]
\end{enumerate}
\end{theorem}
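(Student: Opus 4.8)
The plan is to prove parts \eqref{theorem:pfaffian:nullcone:1}, \eqref{theorem:pfaffian:nullcone:2}, and \eqref{theorem:pfaffian:nullcone:3} simultaneously by a single induction, treating the ideals $I_\sigma+J_a$ and $I_\sigma+J'_a$ as a principal radical system in the sense of \S\ref{sec:prs}. First I would make the reductions already indicated in the text: replace $Y$ by the submatrix on columns $s_0+1,\dots,n$ so that $s_0=0$; discard the minor conditions $I_{k+1}(Y|_{s_k})$ with $k\ge t$, which lie in $\frakP$ by Lemma~\ref{lemma:pfaffian:rank}; and delete repetitions so that $\sigma$ is strictly increasing. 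The two moves generating the system are (i) \emph{killing the next available entry} of row $1$ or of row $t+1$, which increases the number of killed entries and, once all of rows $1$ and $t+1$ have been killed, lets one delete those two rows and pass to the $2(t-1)\times n$ situation (the surviving minor conditions reorganizing themselves into a sequence for the smaller matrix) --- this is the step that consumes the induction on $t$; and (ii) \emph{refining $\sigma$} by inserting a value, which adjoins a new minor condition and turns a non-standard index into a standard one. The base cases are the complete intersection range, where Theorem~\ref{theorem:pfaffian:ci} and Corollary~\ref{corollary:pfaffian:ci} give primality, the Cohen-Macaulay property, and the dimension outright, together with the irreducibility supplied by Proposition~\ref{proposition:irreducible:pfaffian}.

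For part \eqref{theorem:pfaffian:nullcone:1} I would argue exactly as in the proof of Proposition~\ref{proposition:irreducible:pfaffian}: invert a leading minor $\Delta$ of an appropriate submatrix of $Y$ --- the upper size-$m$ minor, after a column permutation avoiding the killed columns --- and use the block computation $(YY_1^{-1})^\tr\Omega(YY_1^{-1})=Z-Z^\tr$ to trivialize the symplectic form over $S_\Delta$, so that $(I_\sigma+J_{s_k})S_\Delta$ is generated by linear forms in a polynomial subring and is therefore prime. That $\Delta$ is a nonzerodivisor modulo $I_\sigma+J_{s_k}$, so that the minimal primes of $I_\sigma+J_{s_k}$ are in bijection with those of its localization and hence there is exactly one, would follow from the inductive hypothesis applied to the larger ideals obtained by killing one more entry, which records that $I_\sigma+J_{s_k}$ is unmixed of the expected dimension with $\Delta$ in no minimal prime; when too few rows survive for $\Delta$ to be available one reduces $n$ or $t$ and cites Proposition~\ref{proposition:irreducible:pfaffian} directly. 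The same argument, after deleting rows $1$ and $t+1$, handles $I_\sigma+J'_{s_k}$.

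For part \eqref{theorem:pfaffian:nullcone:2}, radicality of a general member $I_\sigma+J_a$ would come from Lemma~\ref{lemma:prs}(1): take $x$ to be the next entry of row $1$ (or of row $t+1$ in the primed family), so that $I_\sigma+J_a+xS$ is radical by the induction, and take $P$ to be the relevant minimal prime not containing $x$, which by part \eqref{theorem:pfaffian:nullcone:1} is a standard member of the system (after the permitted column permutations and deletions), hence prime. The remaining hypothesis $xP\subseteq I_\sigma+J_a$ is the content of Lemma~\ref{lemma:minors}: the generators of $P$ beyond those of $I_\sigma+J_a$ are minors of a submatrix of $Y$, and multiplying such a minor by the chosen entry of row $1$ returns it, via Lemma~\ref{lemma:minors}, into $I_\sigma+J_a$ once one absorbs the larger minors already present in $I_\sigma$. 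Primality of a standard member $I_\sigma+J_{s_k}$ then follows from Lemma~\ref{lemma:prs}(2): its radical is prime by part \eqref{theorem:pfaffian:nullcone:1}, the chosen $x$ lies outside that radical because it does not vanish identically on the irreducible set $V(I_\sigma+J_{s_k})$, and $I_\sigma+J_{s_k}+xS$ is the already-treated radical member with one more entry killed.

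For part \eqref{theorem:pfaffian:nullcone:3}, once radicality and primality are available the Cohen-Macaulay property follows by feeding the decomposition behind Lemma~\ref{lemma:prs}(1) --- one has $I=P\cap(I+xS)$ whenever $I$ is radical --- into Lemma~\ref{lemma:CM}, the point being that the pieces $P$, $I+xS$, and $P+xS$ appearing there are, after the reductions, standard members of the system and hence Cohen-Macaulay of the predicted dimension by the inductive hypothesis; for a standard member one may instead observe directly that $x$ is then a nonzerodivisor and descend to a member with one more entry killed. The dimension formulas $m(2t+n-m)-k-\sum_{j=1}^{m-1}s_j$ and $m(2t+n-m-1)-k-\sum_{j=1}^{m-1}s_j$ are then confirmed by checking them against Corollary~\ref{corollary:pfaffian:ci} in the base case and verifying that each elementary move --- killing an entry, performing a refinement, or passing from the $J$-family to the $J'$-family and then dropping $t$ --- changes them by exactly the amount forced by the recursion. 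I expect the principal difficulty to be organizational rather than conceptual: pinning down a well-ordering of the pairs $(\sigma,a)$ under which every ideal on the right-hand side of an application of Lemma~\ref{lemma:prs} or Lemma~\ref{lemma:CM} is strictly smaller, correctly identifying the prime $P$ and the element $x$ in each case (standard versus non-standard $\sigma$, the $J$- versus the $J'$-family, $m<t$ versus $m=t$, and the transition from $I_\sigma+J_n$ to the primed family), and carrying the indices $k$ and $\sum_{j=1}^{m-1}s_j$ faithfully through the passage between the two families and through the reduction of $t$.
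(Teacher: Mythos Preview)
Your outline for parts~\eqref{theorem:pfaffian:nullcone:2} and~\eqref{theorem:pfaffian:nullcone:3} matches the paper's argument closely: the choice of $x$ as the next available entry of row~$1$ or row~$t{+}1$, the refinement $\sigma'=(s_0,\dots,s_{k-1},a,s_{k+1},\dots,s_m)$ furnishing the prime $P$, Lemma~\ref{lemma:minors} delivering $xP\subseteq I$, and Lemma~\ref{lemma:CM} handling the Cohen--Macaulay step are all exactly what the paper does. The paper also computes the dimension directly by parametrizing a dense open set (the matrices whose columns $s_0{+}1,\dots,s_{m-1}{+}1$ have rank~$m$) rather than tracking it through the recursion, but your bookkeeping approach would work once everything else is in place.

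The gap is in part~\eqref{theorem:pfaffian:nullcone:1}. The paper does \emph{not} prove irreducibility by localizing at a minor and linearizing; it proves it independently of~\eqref{theorem:pfaffian:nullcone:2} and~\eqref{theorem:pfaffian:nullcone:3} by exhibiting a surjection $\VV_0\times\VV_1\to V(I_\sigma+J_{s_k})$ from an irreducible source. Here $\VV_0$ is the variety of $2t\times m$ matrices $B$ with isotropic column span and the required first-row vanishing---irreducible by Proposition~\ref{proposition:irreducible:pfaffian}---and $\VV_1$ is an affine space of coefficient matrices $C_1,\dots,C_m$; the map sends $(B,C_\bullet)$ to $(B|_1C_1)\#\cdots\#(B|_mC_m)$, and surjectivity is exactly the content of Lemma~\ref{lemma:functional:symplectic}. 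Your proposal has two problems. First, the claim that $\Delta$ is a nonzerodivisor modulo $I_\sigma+J_{s_k}$ does not follow from the inductive hypothesis on strictly larger ideals: knowing that $S/(I_\sigma+J_{s_k}+xS)$ is Cohen--Macaulay of the predicted dimension says nothing about unmixedness of $I_\sigma+J_{s_k}$ itself, and the complete-intersection argument behind Lemma~\ref{lemma:pfaffian:nzd} is unavailable here because $I_\sigma$ contains minor conditions and is not a complete intersection. Second, after the change of variables $Y\mapsto YY_1^{-1}$ the truncations $Y|_{s_j}$ are destroyed by the column operations, so the minor ideals $I_{j+1}(Y|_{s_j})$ do not become linear in the new coordinates; the computation in Proposition~\ref{proposition:irreducible:pfaffian} works only because there are no such truncation constraints in that base case. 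Without an independent proof of~\eqref{theorem:pfaffian:nullcone:1} your induction for~\eqref{theorem:pfaffian:nullcone:2} cannot start, since Lemma~\ref{lemma:prs}(2) needs $\rad I$ already known to be prime.
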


\begin{proof}
It suffices to prove the assertions when $K$ is algebraically closed; we indeed work under this assumption. We begin by proving~\eqref{theorem:pfaffian:nullcone:1} for the algebraic set $V(I_\sigma+J_{s_k})$. Consider matrices $B$ of size $2t\times m$ for which the columns span an isotropic subspace, and the first~$k$ entries of the first row are zero. Since $m\le t$, Proposition~\ref{proposition:irreducible:pfaffian} implies that the matrices $B$ are the points of an irreducible algebraic set that we denote $\VV_0$.

For $1\le j\le m$, let $C_j$ be a matrix of size $j\times(s_j-s_{j-1})$, and set $A$ to be the matrix
\begin{equation}
\label{equation:pfaffian:irreducible}
(B|_1C_1)\,\#\,(B|_2C_2)\,\#\,\cdots\,\#\,(B|_mC_m),
\end{equation}
where $\#$ denotes the concatenation of matrices. It is readily seen that $A$ is an element of the algebraic set $V(I_\sigma+J_{s_k})$. The matrices $C_1,\dots,C_m$ may be regarded as the points of an affine space $\VV_1$ of dimension
\[
\sum_{j=1}^m j(s_j-s_{j-1}),
\]
so that the construction~\eqref{equation:pfaffian:irreducible} gives a map
\[
\VV_0\times \VV_1\to V(I_\sigma+J_{s_k}).
\]
Since the image of an irreducible algebraic set is irreducible, it suffices to verify that this map is surjective.

Let $A$ be a matrix in the algebraic set $V(I_\sigma+J_{s_k})$. For $1\le j\le m$, let $V_j$ denote the span of the columns of the truncated matrix $A|_{s_j}$. Consider the symplectic form~\eqref{equation:symplectic:form} on $K^{2t}$, and the linear functional $L$ that is projection to the first coordinate. By Lemma~\ref{lemma:functional:symplectic}, there exist isotropic subspaces
\[
W_1 \subset W_2 \subset \dots \subset W_m
\]
such that $V_j\subseteq W_j$ for each $j$, and $W_j$ has rank $j$. Consider a size $2t\times m$ matrix $B$ such that~$B|_j$ spans $W_j$ for each $j$. Then the columns of $A|_{s_j}$ belong to the column span of $B|_j$ for each $j$, so there exist matrices $C_j$ using which $A$ may be obtained as in~\eqref{equation:pfaffian:irreducible}.

The proof that $V(I_\sigma+J'_{s_k})$ is irreducible is similar: we consider instead matrices $B$ of size $2t\times m$, where the columns span an isotropic subspace, and for which the first row is zero, and the first~$k$ entries of row $t+1$ are zero. Proposition~\ref{proposition:irreducible:pfaffian} implies that such matrices~$B$ are the points of an irreducible algebraic set. The linear functional used when applying Lemma~\ref{lemma:functional:symplectic} is now projection to the $t+1$ coordinate.

The proof of~\eqref{theorem:pfaffian:nullcone:2} is via induction, assuming the result for matrices $Y$ of smaller size, as well as for larger ideals in the family, and applying Lemma~\ref{lemma:prs}. Set $I$ to be either $I_\sigma+J_a$ or~$I_\sigma+J'_a$. In the latter case, assume that $a<n$, since otherwise $K[Y]/(I_\sigma+J'_a)$ arises from the smaller matrix obtained by deleting rows $1$ and $t+1$ of $Y$. To apply Lemma~\ref{lemma:prs}, choose
\[
x\colonequals\begin{cases}
y_{1,a+1}& \text{if }\ I=I_\sigma+J_a,\text{ and }a<n,\\
y_{t+1,1}& \text{if }\ I=I_\sigma+J_n,\\
y_{t+1,a+1}& \text{if }\ I=I_\sigma+J'_a.\\
\end{cases}
\]
Specializing $x$ to $1$ and each other entry to $0$, we obtain a matrix in $V(I)\smallsetminus V(I+xS)$, from which it follows that $I+xS$ is a larger ideal in the family, and hence radical by the inductive hypothesis. If $a=s_k$ for some $k$, then $P\colonequals \rad I$ is prime by~\eqref{theorem:pfaffian:nullcone:1}; since $x\notin P$, Lemma~\ref{lemma:prs} implies that $I=P$, and hence $I$ is prime.

In the remaining cases, there exists an integer $k$ with $s_k<a<s_{k+1}$ and the element $x$ is either $y_{1,a+1}$ or $y_{t+1,a+1}$. Set
\[
\sigma'\colonequals(s_0,s_1,\dots,s_{k-1},a,s_{k+1},\dots,s_m),
\]
and take $P$ to be the prime ideal $I_{\sigma'}+J_a$ or~$I_{\sigma'}+J'_a$ in the respective cases; if $k=0$, then~$\sigma'=(a,s_1,\dots,s_m)$ is not standard, but the primality follows nonetheless from the case of a matrix of size $2t\times(n-a)$. The specialization used earlier shows that $x\notin P$. Using Lemma~\ref{lemma:minors}, one has
\[
y_{1,a+1}I_{k+1}(Y|_a)\ \subseteq\ I_{k+2}(Y|_{a+1})+J_a\ \subseteq\ I_{k+2}(Y|_{s_{k+1}})+J_a\ \subseteq\ I_\sigma+J_a
\]
and 
\[
y_{t+1,a+1}I_{k+1}(Y|_a)\ \subseteq\ I_{k+2}(Y|_{a+1})+J'_a\ \subseteq\ I_{k+2}(Y|_{s_{k+1}})+J'_a\ \subseteq\ I_\sigma+J'_a,
\]
so $xP\subseteq I$ in either case. It follows that $I$ is radical by Lemma~\ref{lemma:prs}.

For~\eqref{theorem:pfaffian:nullcone:3}, let $V$ denote the algebraic set $V(I_\sigma+J_a)$ or $V(I_\sigma+J'_a)$. We first compute the dimension of $V$. In each case, $V$ has an open subset $U$ in which each matrix has the property that the submatrix consisting of the columns indexed
\begin{equation}
\label{equation:pfaffian:column:set}
s_0+1,\ s_1+1,\ \dots,\ s_{m-1}+1
\end{equation}
has rank exactly $m$; note that $m\le t$, and that $m\le n$. This open set $U$ is nonempty hence dense, for it contains the matrix in which the columns indexed~\eqref{equation:pfaffian:column:set} are, respectively, the standard basis vectors
\[
e_{t+2},\ e_{t+3},\ \dots,\ e_{t+m},\ e_{t+1},
\]
and all other columns are zero; the order of the standard basis vectors above accounts for the possibility that $V$ may be~$V(I_\sigma+J'_{n-1})$, though it cannot be $V(I_\sigma+J'_n)$, given our hypotheses. It suffices to compute the dimension of $U$.

Given a matrix $A$ in the open set $U$, let $B$ denote the $2t \times m$ submatrix consisting of the columns indexed~\eqref{equation:pfaffian:column:set}. For each $j$ with $1\le j\le m$, the submatrix $D_j$ of $A$ consisting of the columns indexed
$s_{j-1}+1,\dots,s_j$ can be uniquely written as a linear combination of the columns of $B|_j$. The coefficients needed comprise the columns of a size $j\times (s_j-s_{j-1})$ matrix that we denote $C_j$. The first column of $C_j$ is
\[
(0,\ 0,\ \dots,\ 0,\ 1)^\tr
\]
while the other $j(s_j-s_{j-1}-1)$ entries are arbitrary scalars. In the case $V(I_\sigma+J_a)$, the matrices~$B$ vary in a space of dimension
\[
2mt-\binom{m}{2}-k
\]
by Corollary~\ref{corollary:pfaffian:ci}, and it follows that $U$ has dimension
\begin{multline*}
2mt-\binom{m}{2}-k+1(s_1-s_0-1)+2(s_2-s_1-1)+\dots+m(s_m-s_{m-1}-1) \\ 
=\ m(2t+n-m)-k-\sum_{j=1}^{m-1}s_j.
\end{multline*}
The dimension count for $V(I_\sigma+J'_a)$ is similar, bearing in mind that in this case the matrices~$B$ vary in a space of dimension
\[
2mt-\binom{m}{2}-m-k.
\]

The proof of the Cohen-Macaulay property is again via induction, assuming the result for matrices $Y$ of smaller size, as well as for larger ideals in the family. Consider a prime of the form $I_\sigma+J_{s_k}$ where $k\le m-1$. Since $y_{1,s_k+1}$ is a nonzerodivisor on $S/(I_\sigma+J_{s_k})$, it suffices to prove that
\[
S/(I_\sigma+J_{s_k}+y_{1,s_k+1}S)\ =\ S/(I_\sigma+J_{s_k+1})
\]
is Cohen-Macaulay. If $s_k+1=s_{k+1}$, then this is immediate from the inductive hypothesis. Else, $s_k+1<s_{k+1}$, and we claim that $I_\sigma+J_{s_k+1}$ has minimal primes
\[
Q_1\colonequals I_\sigma+J_{s_{k+1}}\quad\text{ and }\quad Q_2\colonequals I_{\sigma'}+J_{s_k+1},
\]
where
\[
\sigma'\colonequals(s_0,s_1,\dots,s_{k-1},s_k+1,s_{k+1},\dots,s_m);
\]
if $k=0$, then~$\sigma'=(1,s_1,\dots,s_m)$ is not standard, but $Q_2$ is prime by the case of a matrix of size $2t\times(n-1)$, and the dimension of~$S/Q_2$ is readily computed. Since $I_\sigma+J_{s_k+1}$ is radical and contained in each $Q_i$, it suffices to verify that
\[
Q_1Q_2\ \subseteq\ I_\sigma+J_{s_k+1}.
\]
This is straightforward, since
\[
y_{1b}I_{k+1}(Y|_{s_k+1})\ \subseteq\ I_{k+2}(Y|_{s_{k+1}}) + J_{s_k+1}\ \subseteq\ I_\sigma+J_{s_k+1}
\]
for each $b$ with $b\le s_{k+1}$ by Lemma~\ref{lemma:minors}. By the inductive hypothesis, each $Q_i$ is prime, defining a Cohen-Macaulay ring $S/Q_i$. Moreover,
\[
Q_1+Q_2\ =\ I_{\sigma'}+J_{s_{k+1}}
\]
is prime, and Lemma~\ref{lemma:CM} applies since
\[
\dim S/Q_1\ =\ \dim S/Q_2\ =\ m(2t+n-m)-k-1-\sum_{j=1}^{m-1}s_j\ =\ \dim S/(Q_1+Q_2)+1.
\]
This concludes the argument that
\[
S/(I_\sigma+J_{s_k+1})\ =\ S/(Q_1\cap Q_2)
\]
is Cohen-Macaulay. The proof for a prime ideal of the form $I_\sigma+J'_{s_k}$, with $k\le m-1$, is similar, and left to the reader.

The remaining case is a prime of the form $I_\sigma+J_n$, where it suffices to prove that
\[
S/(I_\sigma+J_n+y_{t+1,1}S)\ =\ S/(I_\sigma+J'_1)
\]
is Cohen-Macaulay. This follows from the inductive hypothesis if $s_1=1$. If $s_1>1$, we claim that $I_\sigma+J'_1$ has minimal primes
\[
Q_1\colonequals I_\sigma+J'_{s_1}\quad\text{ and }\quad Q_2\colonequals I_{\sigma}+J_n+(y_{21},\ y_{31},\ \dots,\ y_{2t,1})S.
\]
For this, it suffices to verify that $Q_1Q_2\subseteq I_\sigma+J'_1$, which follows using $I_2(Y|_{s_1})\subseteq I_\sigma$. Note that $S/Q_2$ and $S/(Q_1+Q_2)$ are Cohen-Macaulay using the case of a smaller matrix, namely the matrix with the first column of $Y$ deleted. Since
\[
\dim S/Q_1\ =\ \dim S/Q_2\ =\ m(2t+n-m-1)-1-\sum_{j=1}^{m-1}s_j\ =\ \dim S/(Q_1+Q_2)+1,
\]
Lemma~\ref{lemma:CM} allows us to conclude that
\[
S/(I_\sigma+J'_1)\ =\ S/(Q_1\cap Q_2)
\]
is Cohen-Macaulay.
\end{proof}

We single out the main case of the previous theorem:

\begin{theorem}
\label{theorem:pfaffian:nullcone:main}
Let $Y$ be a~$2t\times n$ matrix of indeterminates over a field $K$, where $t$ and $n$ are positive integers. Set~$S\colonequals K[Y]$ and~$\frakP\colonequals (Y^\tr\Omega Y)S$, i.e., $\frakP$ is the ideal generated by the entries of the matrix $Y^\tr\Omega Y$. Then $S/\frakP$ is a Cohen-Macaulay integral domain, and
\[
\dim S/\frakP=\begin{cases}
2nt -\displaystyle{\binom{n}{2}}& \text{if }\ n\le t+1,\\
nt+\displaystyle{\binom{t+1}{2}}& \text{if }\ n\ge t.\\
\end{cases}
\]
\end{theorem}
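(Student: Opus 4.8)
The plan is to deduce Theorem~\ref{theorem:pfaffian:nullcone:main} directly from Theorem~\ref{theorem:pfaffian:nullcone}, as the instance in which the auxiliary ideal is trivial. First I would recall from the discussion preceding Theorem~\ref{theorem:pfaffian:nullcone} that $\frakP$ equals $I_\sigma$ for the standard sequence
\[
\sigma\ =\ \begin{cases}(0,1,2,\dots,n-1,n)& \text{if } n\le t,\\(0,1,2,\dots,t-1,n)& \text{if } n> t.\end{cases}
\]
Since $J_0 = (y_{11},\dots,y_{1,0}) = 0$, one has $\frakP = I_\sigma + J_{s_0}$ with $s_0 = 0$, which is exactly the case $a = s_k$, $k = 0$ of Theorem~\ref{theorem:pfaffian:nullcone}.

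Next I would invoke part~\eqref{theorem:pfaffian:nullcone:2} of that theorem, which gives that $I_\sigma + J_{s_0}$ is prime, so $S/\frakP$ is an integral domain, and part~\eqref{theorem:pfaffian:nullcone:3}, which gives that $S/\frakP$ is Cohen-Macaulay of dimension $m(2t+n-m) - \sum_{j=1}^{m-1}s_j$. It then remains only to evaluate this quantity in the two cases. When $n\le t$ one has $m = n$ and $s_j = j$, so $\sum_{j=1}^{m-1}s_j = \binom{n}{2}$ and the dimension equals $2nt - \binom{n}{2}$; this range is contained in the first case $n\le t+1$ of the statement. When $n> t$ one has $m = t$, $s_j = j$ for $j\le t-1$, and $s_t = n$, so $\sum_{j=1}^{m-1}s_j = \binom{t}{2}$ and the dimension equals $t(t+n) - \binom{t}{2} = nt + \binom{t+1}{2}$, as claimed.

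The only point requiring a moment's care is that the two stated ranges overlap in $n = t$ and $n = t+1$; there one checks by a direct computation that $2nt - \binom{n}{2}$ and $nt + \binom{t+1}{2}$ take the same value, so the piecewise formula is unambiguous. I do not expect any genuine obstacle: the entire mathematical content is already absorbed into Theorem~\ref{theorem:pfaffian:nullcone}, and what is left is the bookkeeping of selecting the correct standard $\sigma$ and simplifying the resulting binomial sums.
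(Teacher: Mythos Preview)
Your proposal is correct and matches the paper's own proof essentially line for line: the paper likewise selects the standard $\sigma=(0,1,\dots,n-1,n)$ for $n\le t$ and $\sigma=(0,1,\dots,t-1,n)$ for $n>t$, applies Theorem~\ref{theorem:pfaffian:nullcone}\,\eqref{theorem:pfaffian:nullcone:3} with $k=0$, and simplifies the resulting sums. The only cosmetic difference is that part~\eqref{theorem:pfaffian:nullcone:3} already asserts the integral domain property, so invoking part~\eqref{theorem:pfaffian:nullcone:2} separately is redundant (though harmless).
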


\begin{proof}
The formulae for the dimension coincide when $n$ equals $t$ or $t+1$.

If $n\le t$ take $\sigma=(0,1,2,\dots,n-1,n)$ in Theorem~\ref{theorem:pfaffian:nullcone}~\eqref{theorem:pfaffian:nullcone:3}, to obtain
\[
\dim S/\frakP\ =\ n(2t+n-n)-(1+2+\dots+(n-1))\ =\ 2nt -\displaystyle{\binom{n}{2}},
\]
while if $n>t$, take $\sigma=(0,1,2,\dots,t-1,n)$, in which case the theorem gives
\[
\dim S/\frakP\ =\ t(2t+n-t)-(1+2+\dots+t-1)\ =\ nt+\binom{t+1}{2},
\]
completing the proof.
\end{proof}

\subsection{The purity of the embedding}

Using this, we settle the $\Sp_{2t}(K)$ case of Theorem~\ref{theorem:main}:

\begin{theorem}
\label{theorem:alternating:purity}
Let $K$ be a field of positive characteristic. Fix positive integers $n$ and~$t$, and consider the inclusion $\phi\colon K[Y^\tr\Omega Y]\to K[Y]$ where $Y$ is a size $2t\times n$ matrix of indeterminates. Then $\phi$ is pure if and only if $n\le t+1$.
\end{theorem}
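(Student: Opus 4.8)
The plan is to run the argument used for Theorems~\ref{theorem:grassmannian:purity} and~\ref{theorem:determinantal}. Write $R\colonequals K[Y^\tr\Omega Y]$ and $S\colonequals K[Y]$, and let $\frakm_R$ be the homogeneous maximal ideal of $R$, so that $\frakm_RS=\frakP$, the ideal generated by the entries of $Y^\tr\Omega Y$. Since $H^k_{\frakm_R}(S)=H^k_\frakP(S)$ for all $k$, Theorem~\ref{theorem:solid} reduces the purity of $\phi$ to the question of whether $H^{\dim R}_\frakP(S)$ is nonzero. By Theorem~\ref{theorem:pfaffian:nullcone:main} the ring $S/\frakP$ is Cohen-Macaulay, so Theorem~\ref{theorem:peskine:szpiro} forces $H^k_\frakP(S)=0$ for every $k\neq\height\frakP$, whereas $H^{\height\frakP}_\frakP(S)\neq 0$ because $S$ is Cohen-Macaulay. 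Consequently $\phi$ is pure if and only if $\dim R=\height\frakP$, and the proof comes down to computing and comparing these two integers.

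For the bookkeeping, recall $\dim R=\binom{n}{2}-\binom{n-2t}{2}$ and $\dim S=2tn$, and that Theorem~\ref{theorem:pfaffian:nullcone:main} gives $\dim S/\frakP=2nt-\binom{n}{2}$ when $n\le t+1$ and $\dim S/\frakP=nt+\binom{t+1}{2}$ when $n\ge t$; hence $\height\frakP=\binom{n}{2}$ in the first range and $\height\frakP=nt-\binom{t+1}{2}$ in the second. When $n\le t+1$ one has $\binom{n-2t}{2}=0$, so $\dim R=\binom{n}{2}=\height\frakP$; moreover $R\cong K[X]/\Pf_{2t+2}(X)$ is then a polynomial ring, so the equality $\height(\frakm_RS)=\dim R$ yields $H^{\dim R}_{\frakm_R}(S)\neq 0$ and $\phi$ is pure by Theorem~\ref{theorem:solid}. (Alternatively, Theorem~\ref{theorem:pfaffian:ci} gives the same equality by exhibiting $\frakP$ as generated by a regular sequence of $\binom{n}{2}$ elements.)

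For the converse, suppose $n\ge t+2$. A direct computation with binomial coefficients shows that $\dim R-\height\frakP$ equals $\tfrac12(n-t)(n-t-1)$ when $t+2\le n\le 2t+1$ (so that $\binom{n-2t}{2}=0$), and equals $\tfrac12 t(2n-3t-1)$ when $n\ge 2t+2$; both expressions are strictly positive under the standing hypotheses, so $\dim R>\height\frakP$. Then $H^{\dim R}_\frakP(S)=0$ by the vanishing noted above, and Theorem~\ref{theorem:solid} shows $\phi$ is not pure.

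The substantive input is entirely contained in Theorem~\ref{theorem:pfaffian:nullcone:main} (primality and Cohen-Macaulayness of the nullcone) together with Peskine-Szpiro; what remains here is routine, and I do not expect a genuine obstacle. The only point requiring care is applying the two branches of the piecewise dimension formula in the correct ranges and checking that $\dim R=\height\frakP$ holds exactly when $n\le t+1$ --- equivalently, as the computation above shows, exactly when $(n-t)(n-t-1)=0$ within the relevant interval. This also squares with the sharper assertion of Theorem~\ref{theorem:main}: purity here occurs only when $R$ is a polynomial ring, hence regular.
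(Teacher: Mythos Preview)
Your proof is correct and uses the same essential ingredients as the paper---Theorem~\ref{theorem:pfaffian:nullcone:main} for the Cohen--Macaulayness of the nullcone, Theorem~\ref{theorem:peskine:szpiro} for the vanishing, and Theorem~\ref{theorem:solid} for the purity criterion. The only organizational difference is in the non-purity direction: the paper first proves a monotonicity step (purity for $(n,t)$ implies purity for $(n',t)$ with $n'\le n$) and thereby reduces to the single boundary case $n=t+2$, where $R$ is still a polynomial ring and the height drops by exactly one; you instead compute $\dim R-\height\frakP$ directly for every $n\ge t+2$ and check positivity in two ranges. Both routes work; the paper's reduction keeps the arithmetic minimal and also makes it transparent that the critical case has $R$ regular, while your direct computation avoids introducing the auxiliary grading used in the reduction. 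One small wording point: your opening sentence asserts the full biconditional ``$\phi$ pure $\iff H^{\dim R}_\frakP(S)\neq 0$'' as a consequence of Theorem~\ref{theorem:solid}, but that equivalence requires $R$ to be a polynomial ring; you do supply this hypothesis later in the purity direction, so the logic is sound, but the opening overstates what Theorem~\ref{theorem:solid} gives in general.
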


\begin{proof}
We claim that if the inclusion $\phi\colon K[Y^\tr\Omega Y]\to K[Y]$ is pure for fixed $(n,t)$, then purity holds as well for the inclusion of the $K$-algebras corresponding to $(n',t)$ with~$n'\le n$.

Set $Y'\colonequals Y|_{n'}$, i.e., $Y'$ is the submatrix consisting of the first $n'$ columns of~$Y$. Consider the $\NN$-grading on~$K[Y]$ where the indeterminates from $Y'$ have degree $0$, as does $K$, while the remaining indeterminates have degree~$1$. Then
\[
{K[Y^\tr\Omega Y]}_0\ =\ K[Y'^\tr\Omega Y'],
\]
so $K[Y'^\tr\Omega Y']$ is a pure subring of $K[Y]$. It follows that the composition
\[
K[Y'^\tr\Omega Y']\ \subseteq\ K[Y^\tr\Omega Y]\ \subseteq\ K[Y]
\]
is pure as well, but then so is $K[Y'^\tr\Omega Y']\subseteq K[Y']$.

Set $S\colonequals K[Y]$ and $R\colonequals K[Y^\tr\Omega Y]$. We next prove that $\phi$ is pure in the case $n=t+1$. In this case the ring $R$ is regular, with the upper triangular entries of $Y^\tr\Omega Y$ forming a regular homogeneous system of parameters for $R$. As $\dim R=\binom{n}{2}$, it suffices by Theorem~\ref{theorem:solid} to verify that the local cohomology module 
\[
H^{\binom{n}{2}}_{\frakm_R}(S)
\]
is nonzero, where $\frakm_R$ is the homogeneous maximal ideal of $R$. This is immediate from Theorem~\ref{theorem:pfaffian:nullcone:main}, which implies that $\frakm_RS$ is an ideal of height $\binom{n}{2}$.

It remains to prove that $\phi\colon R\to S$ is not pure if $n\ge t+2$. By the reduction step, this comes down to the case $n=t+2$. In this case, the ring $R=K[Y^\tr\Omega Y]$ is again regular, of dimension $\binom{n}{2}$, so by Theorem~\ref{theorem:solid} it suffices to verify the vanishing of $H^{\binom{n}{2}}_{\frakm_R}(S)$. This follows from Theorem~\ref{theorem:pfaffian:nullcone:main}, which implies that~$\frakm_RS$ is an ideal of height
\[
\binom{n}{2}-1,
\]
defining a Cohen-Macaulay ring $S/\frakm_RS$.
\end{proof}

\section{Symmetric determinantal rings}
\label{sec:sym}

Let $X$ be an $n\times n$ symmetric matrix of indeterminates over a field $K$. For $d$ a positive integer, the ring~$K[X]/I_{d+1}(X)$ is a Cohen-Macaulay normal domain of dimension
\[
\binom{n+1}{2} - \binom{n+1-d}{2},
\]
with the convention that $\binom{i}{j}=0$ if $i<j$. The Cohen-Macaulay property is due to Kutz~\cite{Kutz}. The ring~$K[X]/I_{d+1}(X)$ is regular precisely if $n\le d$; when that is not the case, it has class group $\ZZ/2$, and is Gorenstein precisely if $n\equiv d+1\mod 2$, \cite{Goto1, Goto2}.

Let $Y$ be a $d\times n$ matrix of indeterminates over a field $K$, and set~$S\colonequals K[Y]$. For $X$ as above, the entrywise map of matrices
\[
X\to Y^\tr Y
\]
induces a $K$-algebra isomorphism between $K[X]/I_{d+1}(X)$ and the subring $R\colonequals K[Y^\tr Y]$ of~$S$. Our goal in this section is to determine when the inclusion~$\phi\colon R\to S$ is pure. The orthogonal group~$\O_d(K)$ acts $K$-linearly on $S$ via
\[
M\colon Y\mapsto MY\qquad\text{ for }\ M\in\O_d(K).
\]
Since $M^\tr M$ equals the identity matrix for $M\in\O_d(K)$, it is immediate that the entries of~$Y^\tr Y$ are fixed by the group action; when the field $K$ is infinite, of characteristic other than two, the invariant ring is precisely the subring~$R$, see~\cite[\S5]{DeConcini-Procesi}. When $K$ is an infinite field of characteristic two, the invariant ring is
\[
K[Y^\tr Y,\ \sum_{i=1}^d y_{ij}\ |\ 1\le j\le n],
\]
as proved by Richman~\cite[\S5]{Richman}; this corrects an error in~\cite[pp.~353--354]{DeConcini-Procesi}. A presentation for the invariant ring in this case is provided by~\cite[Proposition~23]{Richman}.

If $K$ has characteristic zero, then~$\O_d(K)$ is linearly reductive, and it follows that the invariant ring $R$ is a direct summand of $S$ as an $R$-module; specifically, $\phi\colon R\to S$ is pure when~$K$ has characteristic zero.

\subsection{The complete intersection property}

We work out the analogue of Theorem~\ref{theorem:pfaffian:ci} in the symmetric case. The ideal $(Y^\tr Y)$ has
\[
\binom{n+1}{2}
\]
minimal generators, coming from the distinct entries of the symmetric matrix $Y^\tr Y$. We next prove that in the case $n\le (d+1)/2$, these generators form a regular sequence, i.e., that~$K[Y]/(Y^\tr Y)$ is a complete intersection ring. More generally:

\begin{theorem}
\label{theorem:symmetric:ci}
Let $Y$ be a~$d\times n$ matrix of indeterminates over a field $K$, where $d$ and $n$ are positive integers with $n\le (d+1)/2$. For $k<n$, let $\fraka$ be an ideal generated by $k$ distinct entries from the first row. Then
\[
\dim K[Y]/((Y^\tr Y)+\fraka)\ =\ dn-\binom{n+1}{2}-k,
\]
i.e., $K[Y]/((Y^\tr Y)+\fraka)$ is a complete intersection ring.
\end{theorem}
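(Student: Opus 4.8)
Since $n\le(d+1)/2$ means $d\ge 2n-1$, the construction below always fits inside $Y$. The plan is to sandwich $\dim K[Y]/((Y^\tr Y)+\fraka)$. The bound $\dim K[Y]/((Y^\tr Y)+\fraka)\ge dn-\binom{n+1}{2}-k$ is automatic from Krull's height theorem, since $(Y^\tr Y)+\fraka$ is generated by the $\binom{n+1}{2}$ upper‑triangular entries of $Y^\tr Y$ together with the $k$ generators of $\fraka$. So the content is the reverse inequality. After permuting columns I may assume $\fraka=(y_{11},\dots,y_{1k})$; as $\fraka$ is itself generated by linear forms, it suffices to specialize the entries of $Y$ by $dn-\binom{n+1}{2}$ homogeneous linear forms—among them $y_{11},\dots,y_{1k}$—so that the image of $(Y^\tr Y)$ in the resulting polynomial ring $K[\bar Y]$ on $\binom{n+1}{2}$ variables is primary to the homogeneous maximal ideal. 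Granting this, and using that quotienting a nonnegatively graded ring by a homogeneous element of positive degree drops Krull dimension by at most one, we get $\dim K[Y]/((Y^\tr Y)+\fraka)\le \dim K[\bar Y]/(\bar Y^\tr\bar Y)+\bigl(dn-\binom{n+1}{2}-k\bigr)=dn-\binom{n+1}{2}-k$; in fact $K[\bar Y]/(\bar Y^\tr\bar Y)$ will be an Artinian complete intersection.

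\textbf{The construction.} Everything thus reduces to one explicit, characteristic‑free construction: a matrix $\bar Y$ whose entries are $0$ or among indeterminates $z_{ij}$ ($1\le i\le j\le n$), with zeros in positions $(1,1),\dots,(1,k)$, occupying at most $2n-1\le d$ rows (padded with zero rows), such that the only common zero over $\bar K$ of the $\binom{n+1}{2}$ quadrics $(\bar Y^\tr\bar Y)_{ij}$, $i\le j$, is the origin. I would build $\bar Y$ by treating each $z_{ij}$ as a rank‑one ``piece'': place $z_{ij}$ into columns $i$ and $j$ of one chosen row (and $z_{ii}$ into column $i$), arranging that the pieces sharing any given row have pairwise disjoint column supports. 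Then $(\bar Y^\tr\bar Y)_{ii}=\sum_{P\ni i}z_P^2$, while for $i<j$ one has $(\bar Y^\tr\bar Y)_{ij}=z_{ij}^2+\sum z_Pz_Q$, the sum over rows in which columns $i,j$ lie in two distinct pieces $P,Q$. One then verifies the common zero is trivial by a cascade: the off‑diagonal equations with no cross term force those $z_{ij}=0$; this kills further cross terms, forcing the next layer of off‑diagonal variables to vanish; once every $z_{ij}$ with $i<j$ is zero, each diagonal equation reads $z_{ii}^2=0$. (Already for $n=2$ the matrix $\bar Y=\left(\begin{smallmatrix}a&0\\0&b\\c&c\end{smallmatrix}\right)$ yields the ideal $(a^2,b^2,c^2)$.)

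\textbf{The main obstacle.} The hard part will be organizing this packing for all $n$ so that the cascade terminates. For small $n$ one can merge each loop‑piece $z_{ii}$ with a single edge‑piece $z_{jk}$ disjoint from it, which keeps every off‑diagonal equation of the form $z_{jk}^2$ plus cross terms involving only loop variables; but $\binom{n-1}{2}>n$ once $n\ge 5$, so there are more merges to perform than there are loops, and some rows must carry two or more pairwise disjoint edge‑pieces. The resulting cross terms then involve pairs of edge‑variables, and the assignment of pieces to rows—guided, say, by a proper edge‑colouring of $K_n$ augmented with the loops—must be chosen so that in the cascade each such $z_Pz_Q$ is already annihilated by the time the equation containing it is used. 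That is the real bookkeeping; the only other point to check is that the same layout can be taken to have $(0,\dots,0,z_{nn})$ as its first row, which is exactly what makes the first $k\le n-1$ entries of row $1$ vanish and thereby absorbs $\fraka$ for free.

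\textbf{A geometric alternative.} One could instead argue that on $V(Y^\tr Y)$ the columns of $Y$ span a subspace on which the standard bilinear form vanishes identically, so $\operatorname{rank}Y\le\lfloor d/2\rfloor$; stratifying by rank and parametrizing the rank‑$r$ stratum as $[\,r\text{-dimensional isotropic subspaces}\,]\times[\,\text{full-rank }r\times n\text{ matrices}\,]/\GL_r$ gives each stratum dimension $rn+r(d-r)-\binom{r+1}{2}$, which is $\le dn-\binom{n+1}{2}$ precisely when $d\ge 2n-1$, with equality at $r=\min\{\lfloor d/2\rfloor,n\}$, and the vanishing of $k$ first‑row entries cuts the top stratum down by the full $k$. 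The difficulty there is computing the dimension of the relevant isotropic Grassmannian in characteristic $2$, where $\sum x_i^2$ is a perfect square and the symmetric form is not alternating; so the explicit specialization is the safer route for a statement that must hold in all characteristics.
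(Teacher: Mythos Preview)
Your overall strategy matches the paper's exactly: the lower bound on dimension is Krull's height theorem, and the upper bound comes from specializing the entries of $Y$ by $dn-\binom{n+1}{2}$ linear forms so that the resulting $\binom{n+1}{2}$ quadrics define an Artinian ring, with the specialization chosen to kill $n-1$ entries of the first row so that the case $k>0$ is absorbed for free. That is precisely the skeleton of the paper's argument.

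The gap is that you never actually build the specialization. You describe a framework---placing each $z_{ij}$ as a ``piece'' in one row, merging loops with disjoint edges, guided by an edge-colouring of $K_n$---but you explicitly flag the packing and the termination of the cascade as unresolved (``That is the real bookkeeping''). Without the construction in hand, there is no proof: for instance, the natural $n=3$ instance of your merging recipe (rows $(z_{11},z_{23},z_{23})$, $(z_{13},z_{22},z_{13})$, $(z_{12},z_{12},z_{33})$ padded with zeros) gives off-diagonal entries like $z_{12}^2+z_{11}z_{23}+z_{13}z_{22}$, none of which is a pure square, so the cascade does not start in the way you describe. It may still be Artinian, but the mechanism you propose does not verify it.

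The paper sidesteps this by writing down a single explicit $(2n-1)\times n$ matrix $\bar Y$ with an upper-triangular block, a lower-triangular block, and a carefully chosen ``mirror'' pattern of repeated entries within each row. The payoff of this particular shape is that the last column of $\bar Y^\tr\bar Y$ peels off cleanly: its $(1,n)$ entry is a pure square $y_{n1}^2$; modulo $y_{n1}$ the $(2,n)$ entry becomes $y_{n+1,2}^2$; and so on down the column, showing $y_{n1},y_{n+1,2},\dots,y_{2n-1,n}$ are nilpotent. Killing these wipes out the last column and the last two rows of $\bar Y$, and one recurses on the smaller matrix. The first row of $\bar Y$ has zeros in positions $2,\dots,n$, which handles $\fraka$. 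Your geometric alternative is also mentioned implicitly elsewhere in the paper (the isotropic-subspace bound on rank), and you are right that characteristic $2$ is the obstruction to making it a clean dimension count; the paper avoids this entirely by the explicit specialization.
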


\begin{proof}
It suffices to prove the assertion after specializing the entries of the last $d-2n+1$ rows to zero. We may hence assume that $n=(d+1)/2$, i.e., that $Y$ is size $(2n-1)\times n$.

First suppose $k=0$. Specialize the entries of $Y$ to the corresponding entries of the matrix
\[
\bar{Y}\colonequals\begin{pmatrix}
y_{11} & 0 & 0 & 0 & \cdots & 0 & 0 & 0\\
y_{21} & y_{21} & 0 & 0 & \cdots & 0 & 0 & 0\\
y_{31} & y_{32} & y_{31} & 0 & \cdots & 0 & 0 & 0\\
y_{41} & y_{42} & y_{42} & y_{41} & & 0 & 0 & 0\\
\vdots & \vdots & \vdots & \vdots & & & & \vdots\\
y_{n-1,1} & y_{n-1,2} & y_{n-1,3} & y_{n-1,4} & \cdots & y_{n-1,2} & y_{n-1,1} & 0\\
\hline
y_{n1} & y_{n2} & y_{n3} & y_{n4} & \cdots & y_{n3} & y_{n2} & y_{n1}\\
\hline
0 & y_{n+1,2} & y_{n+1,3} & y_{n+1,4} & \cdots & y_{n+1,4} & y_{n+1,3} & y_{n+1,2}\\
0 & 0 & y_{n+2,3} & y_{n+2,4} & \cdots & y_{n+2,5} & y_{n+2,4} & y_{n+2,3}\\
\vdots & \vdots & & & & \vdots & \vdots & \vdots\\
0 & 0 & 0 & 0 & & y_{2n-3,n-2} & y_{2n-3,n-1} & y_{2n-3,n-2}\\
0 & 0 & 0 & 0 & \cdots & 0 & y_{2n-2,n-1} & y_{2n-2,n-1}\\
0 & 0 & 0 & 0 & \cdots & 0 & 0 & y_{2n-1,n}
\end{pmatrix}.
\]
A routine---albeit tedious---count shows that this specialization entails killing
\[
3n(n-1)/2
\]
linear forms in $K[Y]$. The ideal $(Y^\tr Y)$ has~$\binom{n+1}{2}$ minimal generators; since
\[
\dim K[Y]\ =\ (2n-1)n\ =\ \binom{n+1}{2} + \frac{3}{2}n(n-1),
\]
it suffices to verify that
\[
K[\bar{Y}]/(\bar{Y}^\tr\bar{Y})
\]
has dimension zero. The $(1,n)$ entry of the matrix~$\bar{Y}^\tr\bar{Y}$ is $y_{n1}^2$. Modulo $y_{n1}$, the $(2,n)$ entry is~$y_{n+1,2}^2$. Proceeding in this order, examining the last column of $\bar{Y}^\tr\bar{Y}$, we see that
\[
y_{n1},\ y_{n+1,2},\ y_{n+2,3},\ \dots,\ y_{2n-2,n-1},\ y_{2n-1,n}
\]
are nilpotent in $K[\bar{Y}]/(\bar{Y}^\tr\bar{Y})$. Modulo these elements, the last column and the last two rows of~$\bar{Y}$ are zero; proceed inductively.

Since the displayed specialization $\bar{Y}$ entails killing $n-1$ entries from the first row, the case~$0<k<n$ follows as well.
\end{proof}

\subsection{Nullcones of symmetric determinantal rings in characteristic two}

Let $Y$ be a matrix of indeterminates of size~$d\times n$, over a field $K$ of characteristic two. The diagonal entries of the product matrix $Y^\tr Y$ are
\[
y_{11}^2+\dots+y_{d1}^2,\ \dots,\ y_{1n}^2+\dots+y_{dn}^2.
\]
Working in the ring $S\colonequals K[Y]$, the ideal
\[
\frakS\colonequals (Y^\tr Y)S+(y_{11}+\dots+y_{d1},\ \dots,\ y_{1n}+\dots+y_{dn})S
\]
agrees with $(Y^\tr Y)S$ up to radical; we prove next that $\frakS$ is a prime ideal, defining a Cohen-Macaulay ring:

\begin{theorem}
\label{theorem:symmetric:nullcone:char2}
Let $Y$ be a~$d\times n$ matrix of indeterminates over a field $K$ of characteristic two. Set~$S\colonequals K[Y]$ and let~$\frakS$ be as above. Write $d$ as $2t+1$ or $2t+2$, where $t$ is a nonnegative integer. Then $S/\frakS$ is a Cohen-Macaulay integral domain, and
\[
\dim S/\frakS=\begin{cases}
nd -\displaystyle{\binom{n+1}{2}}& \text{if }\ n\le t+1,\\
nt+\displaystyle{\binom{t+1}{2}}& \text{if $d=2t+1$ and }\ n\ge t,\\
n(t+1)+\displaystyle{\binom{t+1}{2}}& \text{if $d=2t+2$ and }\ n\ge t.
\end{cases}
\]
\end{theorem}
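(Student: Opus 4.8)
The plan is to \emph{reduce the statement to Theorem~\ref{theorem:pfaffian:nullcone:main}} by a linear change of coordinates on $S=K[Y]$ that exploits the degeneracy, in characteristic two, of the standard symmetric bilinear form. Write $B\colon K^d\times K^d\to K$ for the form $(v,w)\mapsto v^\tr w$, so that $(Y^\tr Y)_{ij}=B(v_i,v_j)$ where $v_1,\dots,v_n$ are the columns of $Y$; let $H=\{x\in K^d: x_1+\dots+x_d=0\}$, the common zero set of the linear forms $y_{1j}+\dots+y_{dj}$ in $\frakS$, so that a point of $V(\frakS)$ is exactly a matrix whose columns lie in $H$ and are mutually orthogonal for $B$. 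Since $\operatorname{char}K=2$ one has $B(v,v)=(x_1+\dots+x_d)^2$, so $B$ restricts to an \emph{alternating} form on $H$; and since $B$ is nondegenerate on $K^d$ with $H^\perp=K\mathbf{1}$ for $\mathbf{1}\colonequals(1,\dots,1)^\tr$, the radical of $B|_H$ is $H\cap K\mathbf{1}$, which is zero exactly when $\mathbf{1}\notin H$, i.e.\ when $d$ is odd.

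First I would treat $d=2t+1$. Then $B|_H$ is a nondegenerate alternating form on the $2t$-dimensional space $H$, hence admits a symplectic basis $u_1,\dots,u_{2t}$; choose also $e_0\in K^d$ with $B(e_0,\mathbf{1})=1$, which exists as $B$ is nondegenerate. Writing each column as $v_j=c_je_0+\sum_k w_{kj}u_k$ gives an invertible linear substitution $Y\rightsquigarrow(c,W)$, under which $y_{1j}+\dots+y_{dj}$ becomes $c_j$, while $B(v_i,v_j)$ becomes $(W^\tr\Omega W)_{ij}$ plus terms that are multiples of $c_i$ or $c_j$; here $W$ is a $2t\times n$ matrix of indeterminates and $\Omega$ the size $2t$ standard symplectic matrix. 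Hence the transformed ideal is $(c_1,\dots,c_n)+(W^\tr\Omega W)$, so $S/\frakS\cong K[W]/(W^\tr\Omega W)$, and Theorem~\ref{theorem:pfaffian:nullcone:main} immediately yields that $S/\frakS$ is a Cohen-Macaulay integral domain; matching dimensions amounts to the identity $n(2t+1)-\binom{n+1}{2}=2nt-\binom{n}{2}$, with the other range $n\ge t$ inherited verbatim.

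For $d=2t+2$ the form $B|_H$ has one-dimensional radical $K\mathbf{1}$ and induces a nondegenerate alternating form on the $2t$-dimensional quotient $H/K\mathbf{1}$. Lifting a symplectic basis of that quotient to $u_1,\dots,u_{2t}\in H$ and choosing $e_0$ with $B(e_0,\mathbf{1})=1$ as before, the columns of $Y$ write as $v_j=c_je_0+b_j\mathbf{1}+\sum_k w_{kj}u_k$; the same computation shows that $y_{1j}+\dots+y_{dj}$ becomes $c_j$, that $B(v_i,v_j)$ becomes $(W^\tr\Omega W)_{ij}$ plus multiples of the $c$'s, and—crucially—that the variables $b_1,\dots,b_n$ along the $\mathbf{1}$-direction do not appear at all in the quadratic generators, because $\mathbf{1}$ lies in the radical of $B|_H$. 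Thus $S/\frakS$ is a polynomial ring in the $n$ variables $b_1,\dots,b_n$ over $K[W]/(W^\tr\Omega W)$; Theorem~\ref{theorem:pfaffian:nullcone:main} again gives the Cohen-Macaulay domain property, and the dimension is $n$ larger than in the Pfaffian case, which reconciles with $n(2t+2)-\binom{n+1}{2}$ and with $n(t+1)+\binom{t+1}{2}$ in the two ranges of $n$.

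The substantive work, and the only real obstacle, is the characteristic-two linear algebra above: one must arrange the change of basis so that the defining linear forms of $\frakS$ become coordinate functions and the quadratic generators become precisely the entries of $W^\tr\Omega W$, and one must keep careful track of the extra radical direction $K\mathbf{1}$ when $d$ is even, which is exactly what produces the additional $n$ polynomial variables and the split into the last two cases of the stated dimension formula. In contrast to the characteristic $\neq 2$ case, no principal radical system is needed here—the entire conclusion, including primality, is transported through the isomorphism from Theorem~\ref{theorem:pfaffian:nullcone:main}. As a byproduct, since $\frakS$ and $(Y^\tr Y)S$ have the same radical and $\frakS$ is prime, it follows that $S/\rad(Y^\tr Y)S$ is a Cohen-Macaulay integral domain, giving the characteristic-two assertion of Theorem~\ref{theorem:symmetric:nullcone:intro}.
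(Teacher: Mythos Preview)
Your proposal is correct and is essentially the paper's own argument, only presented in coordinate-free language: the paper eliminates the last row via $y_{di}=y_{1i}+\dots+y_{d-1,i}$ to obtain $S/\frakS\cong K[\tilde Y]/(\tilde Y^\tr\Psi\tilde Y)$ for an explicit $(d-1)\times(d-1)$ alternating matrix $\Psi$, then uses that alternating matrices of the same size and rank are cogredient to reduce to~$\Omega$ (or to $\Omega\oplus 0$ when $d$ is even), and applies Theorem~\ref{theorem:pfaffian:nullcone:main}. Your choice of a symplectic basis for $B|_H$ (together with the radical direction $K\mathbf 1$ when $d$ is even) is exactly the cogredience step carried out abstractly, and your vector $e_0$ plays the role of the eliminated row; the two arguments are the same reduction.
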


\begin{proof}
Let $\tilde{Y}$ denote the upper $(d-1)\times n$ submatrix of $Y$. In the ring $S/\frakS$ one has
\[
y_{di}\ =\ y_{1i}+\dots+y_{d-1,i}
\]
for each $i$, so $S/\frakS$ is a homomorphic image of $K[\tilde{Y}]$. Making the substitutions using the equation displayed above, one sees that
\[
S/\frakS\ \cong\ K[\tilde{Y}]/(\tilde{Y}^\tr\Psi\tilde{Y}),
\]
where $\Psi$ is the $(d-1)\times(d-1)$ alternating matrix
\[
\Psi\ =\
\begin{pmatrix}
0 & 1 & 1 & \hdots & 1\\
1 & 0 & 1 & \hdots & 1\\
1 & 1 & 0 & & 1\\
\vdots & \vdots & & \ddots & \\
1 & 1 & 1 & & 0
\end{pmatrix}.
\]
It is readily checked that $\Psi$ is invertible if $d-1$ is even, and that it has rank $d-2$ otherwise. Since alternating matrices of the same size are cogredient as in~\eqref{equation:cogredient} precisely if they have the same rank, if~$d-1$ is even then $\Psi$ is cogredient to the standard symplectic block matrix~$\Omega$, whereas, if $d-1$ is odd, then $\Psi$ is cogredient to
\[
\begin{pmatrix}
\Omega & 0\\
0 & 0
\end{pmatrix},
\]
where $\Omega$ is size $d-2$. This largely reduces the proof to an application of Theorem~\ref{theorem:pfaffian:nullcone:main}:

If $d=2t+1$, the ring $S/\frakS$ is isomorphic to $K[Z]/(Z^\tr\Omega Z)$, where $Z$ is a $2t\times n$ matrix of indeterminates. It follows that $S/\frakS$ is a Cohen-Macaulay integral domain, with
\[
\dim S/\frakS=\begin{cases}
2nt -\displaystyle{\binom{n}{2}} \ =\ nd -\displaystyle{\binom{n+1}{2}}& \text{if }\ n\le t+1,\\
nt+\displaystyle{\binom{t+1}{2}}& \text{if }\ n\ge t.
\end{cases}
\]

If $d=2t+2$, then $S/\frakS$ is isomorphic to a polynomial ring in $n$ indeterminates over the ring $K[Z]/(Z^\tr\Omega Z)$, where $Z$ is a matrix of indeterminates of size $2t\times n$. It follows that~$S/\frakS$ is again a Cohen-Macaulay integral domain, and that
\[
\dim S/\frakS=\begin{cases}
n+2nt-\displaystyle{\binom{n}{2}}\ =\ nd -\displaystyle{\binom{n+1}{2}}& \text{if }\ n\le t+1,\\
n+nt+\displaystyle{\binom{t+1}{2}}& \text{if }\ n\ge t,
\end{cases}
\]
which completes the proof.
\end{proof}

\subsection{Nullcones of symmetric determinantal rings in characteristic other than two}
\label{ssec:sym:odd}

Throughout this section, $K$ will denote a field of characteristic other than two. We study the nullcone $K[Y]/(Y^\tr Y)$, where $Y$ is a matrix of indeterminates of size $d\times n$.

Let $V$ be the vector space $K^d$ with the standard basis. Consider the symmetric bilinear form $B\colon V\times V\to K$ given by
\begin{equation}
\label{equation:symmetric:form}
(v_1,v_2)\mapsto v_1^\tr v_2.
\end{equation}
A subspace $W$ of $V$ is \emph{isotropic} if $B(w_1,w_2)=0$ for all $w_i\in W$. Since $B$ is nondegenerate, an isotropic subspace $W$ has rank at most $d/2$, where $V$ has rank $d$.

Let $M$ be a size $d\times n$ matrix over $K$ with $M^\tr M = 0$. Then the columns of $M$ span an isotropic subspace, so $\rank M \le d/2$. Setting $t\colonequals\lfloor d/2\rfloor$, it follows that $I_{t+1}(M)=0$. If $K$ is algebraically closed, the Nullstellensatz implies that
\[
I_{t+1}(Y)\ \subseteq\ \rad (Y^\tr Y)
\]
in the polynomial ring $K[Y]$. In view of this, set
\[
\frakS\colonequals (Y^\tr Y) + I_{t+1}(Y).
\]
When the size of $Y$ needs to be referenced, we use the notation $\frakS_{d\times n}$. When $d$ is odd, we shall prove that the ideal $\frakS$ is prime, and defines a Cohen-Macaulay ring~$K[Y]/\frakS$. When~$d$ is even with $d\le 2n$, it turns out that $\frakS$ has minimal primes $\frakP$ and~$\frakQ$, see Definition~\ref{definition:p:q}, with the rings $K[Y]/\frakP$ and $K[Y]/\frakQ$ being Cohen-Macaulay. All of this will be proved using principal radical systems.

The proof of the following is much the same as that of Lemma~\ref{lemma:functional:symplectic}:

\begin{lemma}
\label{lemma:functional:symmetric}
Let $K$ be a field. Consider the vector space $K^d$ equipped with a nondegenerate symmetric bilinear form. Let $L$ be a nonzero linear functional on $K^d$, and let
\[
V_1\subseteq V_2\subseteq \dots\subseteq V_m
\]
be isotropic subspaces of $K^d$ with $\rank V_j\le j$ for each $j$, where $m\le \lfloor d/2\rfloor$.

Suppose $L$ vanishes on $V_k$ \,for some $k$. Then there exist isotropic subspaces
\[
W_1\subset W_2\subset\dots\subset W_m
\]
such that, for each $j$, one has $V_j\subseteq W_j$ and $\rank W_j=j$, and $L$ vanishes on $W_k$.
\end{lemma}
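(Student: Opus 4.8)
The plan is to reproduce the proof of Lemma~\ref{lemma:functional:symplectic} essentially line for line; the only genuinely new point is that, $B$ being symmetric rather than alternating, a one-dimensional enlargement of an isotropic subspace now calls for a vector $x$ with $B(x,x)=0$, not merely an arbitrary new vector of $K^d$. As is done repeatedly in this part of the paper --- compare the proof of Theorem~\ref{theorem:pfaffian:nullcone} --- I would reduce to the case that $K$ is algebraically closed, so that the form on $K^d$ is split of Witt index $t\colonequals\lfloor d/2\rfloor$; the exclusion of characteristic $2$ is exactly what keeps the passage between $B$ and the quadratic form $x\mapsto B(x,x)$ harmless. Writing $H\colonequals\ker L$, one reduces to $m=t$ as in Lemma~\ref{lemma:functional:symplectic} by padding the chain with $V_{m+1}=\dots=V_t\colonequals V_m$.

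One then builds $W_t\supset\dots\supset W_1$ by reverse induction on $j$. For $j=t$: if $\dim V_t=t$, set $W_t\colonequals V_t$ (and if $k=t$ then $L$ already vanishes on $V_t=V_k$); if $\dim V_t<t$ and $k<t$, take $W_t$ to be any maximal isotropic subspace containing $V_t$; if $\dim V_t<t$ and $k=t$, so $V_t\subseteq H$, enlarge $V_t$ one step at a time inside $H$, each time choosing a nonzero $x\in(U^\perp\cap H)\smallsetminus U$ with $B(x,x)=0$, so that $U+Kx$ is again isotropic and contained in $H$. For the inductive step, given $W_{j+1}\subset\dots\subset W_t$: if $j\neq k$, choose any $W_j$ of dimension $j$ with $V_j\subseteq W_j\subseteq W_{j+1}$, which exists since $\dim V_j\le j<\dim W_{j+1}$ and $V_j\subseteq V_{j+1}\subseteq W_{j+1}$; if $j=k$, choose $W_k$ of dimension $k$ with $V_k\subseteq W_k\subseteq H\cap W_{k+1}$, which exists since $V_k\subseteq H$ and $\dim(H\cap W_{k+1})\ge(d-1)+(k+1)-d=k$. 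Every $W_j$ is then isotropic, being a subspace of the isotropic subspace $W_t$, and $L$ vanishes on $W_k$ by construction.

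\textbf{Main obstacle.} The step I expect to be the crux --- and the only one that really departs from the symplectic case --- is the base step $j=t$: one must actually be able to enlarge $V_t$, and in the subcase $k=t$ each intermediate $U$, through an isotropic subspace, staying inside $H$ when $k=t$. For $k<t$ this only needs the split form to have Witt index $t$; for the subcase $k=t$ with $d$ odd it follows from the codimension estimates of Lemma~\ref{lemma:functional:symplectic} together with the fact that over an algebraically closed field a nondegenerate symmetric form of rank at least $2$ is isotropic. The delicate case is $k=t$ with $d$ even, where the relevant nondegenerate subquotient of $H$ can drop to rank $1$; handling it requires that $L$ be realized by an isotropic vector, which one arranges by passing to hyperbolic coordinates --- this being the reason a primitive fourth root of unity enters the even-dimensional statements. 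All the remaining ingredients --- the reduction to $m=t$, the reverse induction, the dichotomy $j=k$ versus $j\neq k$, and the remark that a subspace of an isotropic subspace is isotropic --- transfer verbatim from the proof of Lemma~\ref{lemma:functional:symplectic}.
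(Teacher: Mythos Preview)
Your approach --- mirror the proof of Lemma~\ref{lemma:functional:symplectic}, supplying isotropic vectors where the enlargement step needs them --- is exactly what the paper intends by ``much the same,'' and for $k<m$ (equivalently $k<t$ after padding) your argument is correct over an algebraically closed field. This is the only case the paper uses: Theorem~\ref{theorem:symmetric:prs} applies the lemma only with $a=s_k$ for $k\le m-1$. One caveat: your ``reduction to algebraically closed $K$'' is not literally valid here, since subspaces of $\bar K^d$ need not descend to $K^d$, and indeed the lemma already fails over fields where the form has small Witt index. Since every application in the paper is over an algebraically closed field, one should simply add that hypothesis rather than ``reduce'' to it.

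Your handling of the subcase $k=m=t$ with $d=2t$ even has a genuine gap. The proposed fix --- pass to hyperbolic coordinates so that $L$ is realized by an isotropic vector --- cannot work: whether the vector $v$ with $L=B(v,\,\cdot\,)$ is isotropic is intrinsic to $L$ and $B$, not a matter of coordinates. In fact the lemma is \emph{false} as stated in this subcase. Take $B=\sum x_i^2$, $L(x)=x_1$, and all $V_j=0$; then $H=e_1^\perp$ carries a nondegenerate form of rank $2t-1$ and Witt index $t-1$, so $H$ contains no isotropic subspace of dimension $t$, and the required $W_t$ does not exist. Concretely, at your last enlargement step with $\dim U=t-1$, the quotient $U^\perp/U$ is a hyperbolic plane and $(U^\perp\cap H)/U$ is the perp of the anisotropic class $\bar v$, hence itself an anisotropic line. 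So your instinct that this was the crux was right; it simply cannot be repaired without restricting to $k<m$ or adding the hypothesis that $L$ is represented by an isotropic vector.
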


\begin{remark}
\label{remark:orthogonal}
Let $K$ be an algebraically closed field of characteristic other than two. The orthogonal group~$\O_n(K)$ is the group of $n\times n$ matrices~$M$ over~$K$ with $M^\tr M=\1$. It follows that~$\O_n(K)$ is an algebraic group; it has two connected components, the special orthogonal group $\SO_n(K)$ consisting of elements with determinant~$1$, and its complement consisting of orthogonal matrices of determinant $-1$.

Let $W$ be an $n\times n$ matrix of indeterminates over $K$, in which case~$\O_n(K)$ may be viewed as the algebraic set~$V(W^\tr W-\1)$. The ideal $(W^\tr W-\1)$ is radical in $K[W]$, minimally generated by~$\binom{n+1}{2}$ polynomials that form a regular sequence, see for example~\cite[page~238]{Procesi}. Since $\O_n(K)$ is nonsingular, being an algebraic group, each irreducible component is nonsingular. By Serre's criterion, $K[W]/(W^\tr W-\1)$ is a normal ring; it is a product of normal domains corresponding to the two connected components.

For an integer $k$ with $k<n$, let $Z\colonequals W|_k$ denote the submatrix consisting of the first~$k$ columns of $W$. A minimal generating set for the ideal $(Z^\tr Z-\1)$ extends to one for the ideal $(W^\tr W-\1)$, so~$K[Z]/(Z^\tr Z-\1)$ is also a normal complete intersection ring. The map
\[
\SO_n(K)\ \to\ V(Z^\tr Z-\1)
\]
given by truncating columns is surjective since each matrix in $V(Z^\tr Z-\1)$ can be extended to one in $\SO_n(K)$. Since $\SO_n(K)$ is irreducible, so is its image; it follows that
\[
K[Z]/(Z^\tr Z-\1)
\]
is a normal \emph{domain}.
\end{remark}

\begin{definition}
\label{definition:sgn}
Let $\alpha$ be a subset of $\{1,\dots,n\}$, and $\alpha^\comp$ its complement. Set $\sgn(\alpha)$ to be the sign of the permutation that sends the $n$-tuple $(1,\dots,n)$ to the $n$-tuple $(\alpha,\alpha^\comp)$, where the entries of each of~$\alpha$ and~$\alpha^\comp$ are in ascending order.
\end{definition}

For a matrix $M$, a subset $\alpha$ of the row indices, and a subset $\beta$ of the column indices, set~$M_{\alpha|\beta}$ to be the submatrix with rows $\alpha$ and columns $\beta$. The following lemma appears to be well-known, but we include a proof based on \cite{Jagy}:

\begin{lemma}
\label{lemma:submatrix}
Let $Q\in\O_n(K)$. Let~$\alpha$ and $\beta$ be subsets of $\{1,\dots,n\}$ of cardinality $k$, where~$1\le k \le n-1$. Then
\[
\det(Q_{\alpha|\beta})\ =\ \sgn(\alpha) \sgn(\beta)\det(Q)\det(Q_{\alpha^\comp|\beta^\comp}).
\]
\end{lemma}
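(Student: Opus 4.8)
The plan is to reduce, by suitable row and column permutations, to the case $\alpha = \beta = \{1,\dots,k\}$, and then to extract the identity from the block decomposition of an orthogonal matrix together with the relation $\det(Q)^2 = 1$.

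First I would set up the reduction. Let $P$ be the permutation matrix moving the rows of $Q$ indexed by $\alpha$, in increasing order, into positions $1,\dots,k$, and those indexed by $\alpha^\comp$, in increasing order, into positions $k+1,\dots,n$; let $R$ be the analogous permutation matrix acting on columns via $\beta$ and $\beta^\comp$. By Definition~\ref{definition:sgn}, $\det(P) = \sgn(\alpha)$ and $\det(R) = \sgn(\beta)$. Since permutation matrices lie in $\O_n(K)$, the matrix $Q' \colonequals PQR$ is again orthogonal, with $\det(Q') = \sgn(\alpha)\,\sgn(\beta)\,\det(Q)$; moreover, by construction, $Q'$ has leading $k\times k$ submatrix $Q_{\alpha|\beta}$ and trailing $(n-k)\times(n-k)$ submatrix $Q_{\alpha^\comp|\beta^\comp}$. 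Writing $Q' = \begin{pmatrix} A & B \\ C & D \end{pmatrix}$ with $A$ of size $k\times k$, it therefore suffices to prove that $\det(A) = \det(Q')\det(D)$: feeding this back gives $\det(Q_{\alpha|\beta}) = \det(A) = \det(Q')\det(D) = \sgn(\alpha)\sgn(\beta)\det(Q)\det(Q_{\alpha^\comp|\beta^\comp})$.

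To prove $\det(A) = \det(Q')\det(D)$, I would use $(Q')^{-1} = (Q')^\tr$ and argue by cases. If $D$ is invertible, the block-inverse formula shows the leading $k\times k$ block of $(Q')^{-1}$ is $(A - BD^{-1}C)^{-1}$, while the Schur-complement determinant formula gives $\det(Q') = \det(D)\det(A - BD^{-1}C)$. Since the leading block of $(Q')^{-1} = (Q')^\tr$ is $A^\tr$, comparing determinants yields $\det(A) = 1/\det(A - BD^{-1}C) = \det(D)/\det(Q') = \det(D)\det(Q')$, the last equality because $\det(Q')^2 = 1$. Symmetrically, if $A$ is invertible, the same reasoning applied to the trailing block $D^\tr$ of $(Q')^\tr$ and the Schur complement of $A$ gives $\det(D) = \det(A)\det(Q')$, which is the same identity. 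If neither $A$ nor $D$ is invertible, both sides vanish, so the identity holds trivially. This exhausts all cases.

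Mathematically the argument is soft; I expect the only delicate point to be the sign bookkeeping in the reduction step --- verifying that bringing $\alpha$ and $\beta$ to the front contributes exactly the factor $\sgn(\alpha)\sgn(\beta)$, and that the complementary block is precisely $Q_{\alpha^\comp|\beta^\comp}$ with its rows and columns in ascending order.
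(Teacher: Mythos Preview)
Your proof is correct and shares the same overall structure as the paper's: reduce by row and column permutations to the case $\alpha=\beta=\{1,\dots,k\}$, and then establish the block identity $\det(A)=\det(Q')\det(D)$ for an orthogonal matrix $Q'=\begin{pmatrix}A & B\\ C & D\end{pmatrix}$.

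The only real difference is in how the base case is handled. You invoke Schur complements and the relation $(Q')^{-1}=(Q')^\tr$, which forces a three-way case split on whether $A$ or $D$ is invertible. The paper instead observes directly that
\[
\begin{pmatrix}A & B\\ C & D\end{pmatrix}
\begin{pmatrix}\1_k & C^\tr\\ 0 & D^\tr\end{pmatrix}
\ =\
\begin{pmatrix}A & 0\\ C & \1_{n-k}\end{pmatrix},
\]
using $AC^\tr+BD^\tr=0$ and $CC^\tr+DD^\tr=\1_{n-k}$ from $Q'(Q')^\tr=\1$; taking determinants gives $\det(Q')\det(D)=\det(A)$ in one stroke, with no invertibility hypothesis and no appeal to $\det(Q')^2=1$. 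Your route is perfectly valid, but the paper's block multiplication is a bit cleaner and avoids the case analysis entirely.
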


\begin{proof}
First consider the case $\alpha=\{1,\dots,k\}=\beta$. Let
\[
Q\ =\ \begin{pmatrix}
A & B\\
C & D
\end{pmatrix},
\]
where $A$ is a square matrix of size $k$, and $D$ is a square matrix of size $n-k$. Then
\[
\begin{pmatrix}
\1_k & 0 \\
0 & \1_{n-k} \end{pmatrix}
\ =\ QQ^\tr\ =\
\begin{pmatrix}
A & B\\
C & D
\end{pmatrix}
\begin{pmatrix}
A^\tr & C^\tr\\
B^\tr & D^\tr
\end{pmatrix}
\ =\
\begin{pmatrix}
AA^\tr + BB^\tr & AC^\tr + BD^\tr\\
CA^\tr + DB^\tr & CC^\tr+DD^\tr
\end{pmatrix},
\]
using which one has
\[
\begin{pmatrix}
A & B\\
C & D
\end{pmatrix}
\begin{pmatrix}
\1_k & C^\tr \\
0 & D^\tr
\end{pmatrix}
\ =\
\begin{pmatrix}
A & AC^\tr + BD^\tr \\
C & CC^\tr+DD^\tr
\end{pmatrix}
\ =\
\begin{pmatrix}
A & 0 \\
C & \1_{n-k}
\end{pmatrix}.
\]
Taking determinants gives
\[
\det Q\det D\ =\ \det A,
\]
which is precisely the assertion of the lemma in this case.

For arbitrary $\alpha,\beta$, permute the rows of $Q$ by sending the rows indexed $(\alpha,\alpha^\comp)$ to the rows indexed $(1,\dots,n)$, and the columns indexed $(\beta,\beta^\comp)$ to the columns indexed $(1,\dots,n)$. This yields an orthogonal matrix with determinant $\sgn(\alpha)\sgn(\beta)\det(Q)$. The result now follows from the previous case.
\end{proof}

\begin{definition}
\label{definition:p:q}
Let $Y$ be a $2t \times n$ matrix of indeterminates over a field $K$ of characteristic other than two, where $t\le n$. Assume that $K$ contains an element $i$ with $i^2=-1$.

Set $\frakP$ to be the ideal of $K[Y]$ generated by $\frakS$ and the polynomials
\[
\det(Y_{\alpha|\beta}) - i\,^t\sgn(\alpha)\det(Y_{\alpha^\comp|\beta}),
\]
for all subsets $\alpha\subseteq\{1,\dots,2t\}$ and $\beta\subseteq\{1,\dots,n\}$ of size $t$.

Similarly, set $\frakQ$ to be the ideal generated by $\frakS$ and the polynomials
\[
\det(Y_{\alpha|\beta}) + i\,^t\sgn(\alpha)\det(Y_{\alpha^\comp|\beta}),
\]
for all $\alpha$ and $\beta$ as before. We use~$\frakP_{2t\times n}$ and $\frakQ_{2t\times n}$ when the size of $Y$ needs clarification.
\end{definition}

It is readily seen that
\begin{equation}
\label{equation:p:q:in:I_t}
\frakP\ \subseteq\ (Y^\tr Y) + I_t(Y)
\quad\text{ and }\quad
\frakQ\ \subseteq\ (Y^\tr Y) + I_t(Y)
\end{equation}
in $K[Y]$, and that setting $J_n\colonequals(y_{11},\ y_{12},\ \dots,\ y_{1n})$ one has
\begin{equation}
\label{equation:p:q:containment}
\frakP+J_n\ =\ (Y^\tr Y) + I_t(Y) + J_n\ =\ \frakQ+J_n.
\end{equation}

\begin{lemma}
\label{lemma:alpha}
Suppose $M$ and $Q$ are $n\times n$ matrices over a field $K$, where $Q\in\O_n(K)$. Let~$\alpha$ be a size $n$ subset of $\{1,\dots,2n\}$, and $\alpha^\comp$ its complement. Then
\[
\det\left[\begin{pmatrix} M \\ -iQM \end{pmatrix}_{\alpha}\right]\ =\ 
i^n \sgn(\alpha)(\det Q)\det\left[\begin{pmatrix} M \\ -iQM \end{pmatrix}_{\alpha^\comp}\right],
\]
where $(\phantom{M})_{\alpha}$ denotes the submatrix with rows $\alpha$, and $\sgn(\alpha)$ is as in Definition~\ref{definition:sgn}.
\end{lemma}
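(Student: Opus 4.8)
The plan is to reduce the asserted identity first to one not involving $M$, and then to a single combinatorial statement about signs. Decompose the index set by writing $\beta\colonequals\alpha\cap\{1,\dots,n\}$ and letting $\gamma\subseteq\{1,\dots,n\}$ be the set with $\alpha\cap\{n+1,\dots,2n\}=n+\gamma$, so that $|\beta|+|\gamma|=n$; likewise $\alpha^\comp=\beta^\comp\sqcup(n+\gamma^\comp)$, with complements taken inside $\{1,\dots,n\}$, and $|\beta^\comp|=|\gamma|$, $|\gamma^\comp|=|\beta|$. Since row $j$ of $\begin{pmatrix}M\\-iQM\end{pmatrix}$ is $e_j^\tr M$ for $j\le n$ and $-i\,(e_{j-n}^\tr Q)M$ for $j>n$, selecting the rows indexed by $\alpha$ yields the factorization
\[
\begin{pmatrix}M\\-iQM\end{pmatrix}_\alpha\ =\ \begin{pmatrix}E_\beta\\-iQ_\gamma\end{pmatrix}M,
\]
where $E_\beta$ is the $|\beta|\times n$ matrix with rows $e_j^\tr$, $j\in\beta$ in ascending order, and $Q_\gamma$ is the submatrix of $Q$ consisting of the rows indexed by $\gamma$; the first factor on the right-hand side is an $n\times n$ matrix. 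Taking determinants, factoring $-i$ out of the last $|\gamma|$ rows, and doing the same with $\alpha$ replaced by $\alpha^\comp$, the claimed identity becomes (after cancelling the common factor $\det M$, both sides vanishing if $\det M=0$)
\[
(-i)^{|\gamma|}\det\begin{pmatrix}E_\beta\\ Q_\gamma\end{pmatrix}\ =\ i^n\sgn(\alpha)(\det Q)(-i)^{|\gamma^\comp|}\det\begin{pmatrix}E_{\beta^\comp}\\ Q_{\gamma^\comp}\end{pmatrix}.
\]

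Next I would evaluate $\det\begin{pmatrix}E_\beta\\ Q_\gamma\end{pmatrix}$ by Laplace expansion along its first $|\beta|$ rows: the only nonvanishing maximal minor there is the one on the columns $\beta$, equal to $1$, so the determinant equals $(-1)^{\binom{|\beta|+1}{2}+\sum_{j\in\beta}j}\det(Q_{\gamma|\beta^\comp})$; identifying this exponent with $\sgn(\beta)$ gives $\det\begin{pmatrix}E_\beta\\ Q_\gamma\end{pmatrix}=\sgn(\beta)\det(Q_{\gamma|\beta^\comp})$, and the same computation with $\beta,\gamma$ replaced by $\beta^\comp,\gamma^\comp$ yields $\det\begin{pmatrix}E_{\beta^\comp}\\ Q_{\gamma^\comp}\end{pmatrix}=\sgn(\beta^\comp)\det(Q_{\gamma^\comp|\beta})$. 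Then Lemma~\ref{lemma:submatrix}, applied to $Q\in\O_n(K)$ with row set $\gamma$ and column set $\beta^\comp$ (the degenerate cases $|\gamma|\in\{0,n\}$ being handled directly, using $(\det Q)^2=1$ and the sign of a block transposition), rewrites $\det(Q_{\gamma|\beta^\comp})=\sgn(\gamma)\sgn(\beta^\comp)(\det Q)\det(Q_{\gamma^\comp|\beta})$.

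Substituting these into the reduced identity, the units $\sgn(\beta^\comp)$ and $\det Q$ cancel, and using $(-i)^{|\gamma^\comp|}=(-i)^{n-|\gamma|}$ together with $i^n(-i)^n=1$ and $(-i)^2=-1$, everything collapses to the single combinatorial statement
\[
\sgn(\alpha)\ =\ (-1)^{|\gamma|}\sgn(\beta)\sgn(\gamma).
\]
I would prove this by writing $\sgn(\alpha)=(-1)^{I(\alpha)}$, where $I(\alpha)=\#\{(a,b):a\in\alpha,\ b\notin\alpha,\ a>b\}$, and partitioning the pairs according to which half of $\{1,\dots,2n\}$ contains $a$ and $b$: pairs with both $\le n$ contribute $I(\beta)$, pairs with both $>n$ contribute $I(\gamma)$, pairs with $a\le n<b$ contribute none, and pairs with $b\le n<a$ contribute all $|\gamma|\cdot|\beta^\comp|=|\gamma|^2$ of them, so $I(\alpha)=I(\beta)+I(\gamma)+|\gamma|^2$ and the claim follows since $|\gamma|^2\equiv|\gamma|\pmod 2$. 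The only real hazard is losing a sign or a power of $i$ in this final reconciliation; conceptually the argument rests entirely on the factorization above, one Laplace expansion, and Lemma~\ref{lemma:submatrix}.
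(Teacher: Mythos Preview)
Your proof is correct. Both your argument and the paper's begin with the same reduction $\begin{pmatrix}M\\-iQM\end{pmatrix}=\begin{pmatrix}\1_n\\-iQ\end{pmatrix}M$ and both ultimately rest on Lemma~\ref{lemma:submatrix}, but the routes diverge after that. The paper treats only the special subset $\alpha=\{1,\dots,k,\,n+k+1,\dots,2n\}$ by a direct block computation, obtaining $\det Q\det D=\det A$ from the special case of Lemma~\ref{lemma:submatrix}, and then dispatches the general $\alpha$ with the sentence ``permute rows and columns, keeping track of sign changes.'' You instead handle every $\alpha$ uniformly: the Laplace expansion along the $E_\beta$ block isolates the minor $\det(Q_{\gamma|\beta^\comp})$, the general form of Lemma~\ref{lemma:submatrix} converts it to $\det(Q_{\gamma^\comp|\beta})$, and the residual sign is pinned down by the clean inversion count $I(\alpha)=I(\beta)+I(\gamma)+|\gamma|^2$. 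Your approach trades the paper's easy special-case calculation for a short combinatorial lemma, with the payoff that nothing is left to the reader; the paper's approach is quicker to write down but leaves the sign bookkeeping in the general case implicit.
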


\begin{proof}
Using $\begin{pmatrix} M \\ -iQM \end{pmatrix}=\begin{pmatrix} \1_n \\ -iQ \end{pmatrix}M$, it suffices to prove the result when $M$ is the identity matrix. First consider the case
\[
\alpha\colonequals\{1,2,\dots,k,\ n+k+1,n+k+2,\dots,2n\},
\]
and write
\[
\begin{pmatrix} \1_n \\ -iQ \end{pmatrix}\ =\ \begin{pmatrix}
\1_k & 0\\
0 & \1_{n-k}\\
-iA & -iB\\
-iC & -iD
\end{pmatrix},
\]
where $Q=\begin{pmatrix}A & B\\C & D\end{pmatrix}$ for square matrices $A$ and $D$ of size $k$ and $n-k$ respectively. Then
\[
\det\left[\begin{pmatrix}\1_n\\ -iQ\end{pmatrix}_{\alpha}\right]\ =\ 
\det\begin{pmatrix}
\1_k & 0\\
-iC & -iD
\end{pmatrix}\ =\ (-i)^{n-k}\det D
\]
and
\begin{multline*}
\det\left[\begin{pmatrix}\1_n\\ -iQ\end{pmatrix}_{\alpha^\comp}\right]\ =\ 
\det\begin{pmatrix}
0 & \1_{n-k}\\
-iA & -iB
\end{pmatrix}\ =\ (-1)^{k(n-k)} \det\begin{pmatrix}
\1_{n-k} & 0\\
-iB & -iA
\end{pmatrix}\\
=\ (-1)^{k(n-k)}(-i)^k\det A.
\end{multline*}
The required verification is now
\[
(-i)^{n-k}\det D\ =\ i^n\sgn(\alpha)(\det Q) (-1)^{k(n-k)}(-i)^k\det A,
\]
which follows since $\sgn(\alpha)=(-1)^{n(n-k)}$ and $\det Q\det D=\det A$ by Lemma~\ref{lemma:submatrix}.

For an arbitrary $\alpha$, permute rows and columns, keeping track of sign changes, so as to reduce to the case settled above.
\end{proof}

The following proposition is the analogue of Proposition~\ref{proposition:irreducible:pfaffian} in the symmetric case:

\begin{proposition}
\label{proposition:symmetric:irreducible}
Let $d$ and $n$ be positive integers with $n\le d/2$. Let $Y$ be a $d\times n$ matrix of indeterminates over an algebraically closed field $K$ of characteristic other than two. For $a$ an integer with $0\le a \le n$, set
\[
J_a\colonequals(y_{11},\ y_{12},\ \dots,\ y_{1a})S
\]
and $I\colonequals (Y^\tr Y)S + J_a$, where $S\colonequals K[Y]$.
\begin{enumerate}[\quad \rm(1)]
\item If $n<d/2$ and $a<n$, then $V(I)$ is irreducible.
\item If $n=d/2$ and $a<n$, then $V(I)$ has irreducible components $V(\frakP+J_a)$, $V(\frakQ+J_a)$.
\end{enumerate}
\end{proposition}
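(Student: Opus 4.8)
The plan is to follow the template of the proof of Proposition~\ref{proposition:irreducible:pfaffian}: since $K$ is algebraically closed, I would localize $S$ at the determinant of the first $n$ rows of $Y$ and read off the minimal primes of $I$ there. Write $Y=\binom{Y_1}{Y_2}$, where $Y_1$ consists of the first $n$ rows of $Y$, and set $\Delta\colonequals\det Y_1$. The key point, deferred to the end, is that $\Delta$ is a nonzerodivisor on $S/I$; granting this, the minimal primes of $I$ correspond bijectively to those of $IS_\Delta$, and it suffices to study the latter.

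Over $S_\Delta$ the matrix $Y_1$ is invertible, the entries of $Y_1$ and of $Z\colonequals Y_2Y_1^{-1}$ are algebraically independent, $S_\Delta=K[Y_1,Z]_\Delta$, and from $YY_1^{-1}=\binom{\1_n}{Z}$ one obtains $(Y^\tr Y)S_\Delta=(\1_n+Z^\tr Z)S_\Delta$. Since $J_a$ is generated by $a<n$ of the entries of the first row of $Y_1$, the ring $S_\Delta/IS_\Delta$ is a localized polynomial ring over $K$ tensored with $K[Z]/(\1_n+Z^\tr Z)$; substituting $W=iZ$ identifies the latter factor with $K[W]/(W^\tr W-\1_n)$, for $W$ an $(d-n)\times n$ matrix of indeterminates. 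If $n<d/2$ then $d-n>n$, and by Remark~\ref{remark:orthogonal} the ring $K[W]/(W^\tr W-\1_n)$ is a normal domain, so $S_\Delta/IS_\Delta$ is a domain and $V(I)$ is irreducible. If $n=d/2$ then $W$ is square and $K[W]/(W^\tr W-\1_n)$ is the coordinate ring of $\O_n(K)$, a product of two normal domains, namely the components $\det W=1$ and $\det W=-1$, by Remark~\ref{remark:orthogonal}; hence $IS_\Delta$ has exactly two minimal primes and $V(I)$ has exactly two irreducible components.

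To match these with $V(\frakP+J_a)$ and $V(\frakQ+J_a)$ in the case $n=d/2$, put $Q\colonequals iZ$, so that $Q^\tr Q=\1_n$ modulo $(Y^\tr Y)S_\Delta$ and $Y=\binom{\1_n}{-iQ}Y_1$. Lemma~\ref{lemma:alpha} then shows that, modulo $(Y^\tr Y)S_\Delta$, each generator $\det Y_{\alpha|\beta}-i^{t}\sgn(\alpha)\det Y_{\alpha^\comp|\beta}$ of $\frakP$ is a unit multiple of $(\det Q-1)\Delta$, and for $\alpha=\{n+1,\dots,2n\}$ even a unit multiple of $\det Q-1$; hence $\frakP S_\Delta=(\1_n+Z^\tr Z,\ \det Q-1)S_\Delta$ and, likewise, $\frakQ S_\Delta=(\1_n+Z^\tr Z,\ \det Q+1)S_\Delta$, which are precisely the two minimal primes of $IS_\Delta$. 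A short argument---using that the generators of $\frakP$ force any rank-$n$ matrix in its zero set to have column space in one of the two rulings of isotropic $n$-planes, while matrices of rank $<n$ lie in the closures of both top-dimensional strata---then yields that $V(\frakP+J_a)$ and $V(\frakQ+J_a)$ are the two irreducible components of $V(I)$.

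The main obstacle is the nonzerodivisor claim. By Theorem~\ref{theorem:symmetric:ci}, applicable since $n\le d/2\le(d+1)/2$ and $a<n$, the ring $S/I$ is a complete intersection, hence equidimensional of dimension $dn-\binom{n+1}{2}-a$; so it suffices to show that $V(I)\cap V(\Delta)$ has strictly smaller dimension, equivalently that no irreducible component of $V(I)$ lies in $V(\Delta)$. I would establish this either by a direct dimension count---stratifying $V(I)\cap V(\Delta)$ by the rank of the matrix, bounding the rank-$n$ stratum by the proper subvariety of isotropic $n$-planes that meet the span of the last $d-n$ coordinate vectors nontrivially, and the strata of rank $<n$ by the deeper nullcone cut out by $I_n(Y)$, the hypothesis $n\le d/2$ entering in the latter estimate---or, paralleling Lemma~\ref{lemma:pfaffian:nzd}, by exploiting the action on $V(I)$ of the connected subgroup of $\O_d(K)$ that fixes the first row of $Y$ (isomorphic to $\SO_{d-1}(K)$): this subgroup stabilizes each minimal prime of $I$, and an induction shows that a minimal prime containing $\Delta$ would be forced to contain every $n\times n$ minor of $Y$ involving the first row, and hence enough further minors to contradict the dimension of $S/I$. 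This estimate is the symmetric analogue of Lemma~\ref{lemma:pfaffian:nzd} and is where the real work lies.
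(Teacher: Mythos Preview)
Your proposal is correct and follows essentially the same route as the paper: first show $\Delta$ is a nonzerodivisor on $S/I$ via the action of $\SO_{d-1}(K)$, then localize at $\Delta$, change variables to reduce to $K[W]/(W^\tr W-\1_n)$, and invoke Remark~\ref{remark:orthogonal} and Lemma~\ref{lemma:alpha}. One simplification worth noting: the paper's nonzerodivisor argument needs no induction, since signed permutations of rows $2,\dots,d$ already lie in $\SO_{d-1}(K)$, so a minimal prime containing $\Delta$ immediately contains every maximal minor involving the first row, whence either $J_n$ or $I_n(Y)$, both of which are excluded by a dimension count.
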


\begin{proof}
Let $\Delta$ be the upper $n\times n$ minor of $Y$. We claim that $\Delta$ is a nonzerodivisor on $S/I$. Since $S/I$ is a complete intersection ring by Theorem~\ref{theorem:symmetric:ci}, it suffices to show that $\Delta$ does not belong to any minimal prime of $I$.

Let $G$ be a copy of $\SO_{d-1}(K)$, embedded in $\SO_d(K)$ as
\[
\begin{pmatrix}
1 & 0\\
0 & Q
\end{pmatrix},
\qquad\text{ for }\ Q\in\SO_{d-1}(K).
\]
The action of $G$ on $S$ with $M\colon Y\mapsto MY$ induces an action on $S/I$, and hence on the set of minimal primes of $S/I$. Since $G$ is connected, this action must be trivial, i.e., $G$ stabilizes each minimal prime of $S/I$.

Up to sign changes, rows of $Y$ other than the first row may be permuted using an element of~$G$. It follows that under the action of $G$ on $S$, each maximal minor of $Y$ that involves the first row is in the orbit of $\Delta$, so any minimal prime of $I$ containing $\Delta$ also contains each maximal minor involving the first row; said otherwise, if $\Delta$ vanishes on an irreducible component of~$V(I)$, then so does each such minor.

For a $d\times n$ matrix over $K$, if each maximal minor that involves the first row is zero, and some other maximal minor is nonzero, then the first row must be zero. Hence if $\Delta$ vanishes on some irreducible component of $V(I)$, then either $J_n$ or $I_n(Y)$ vanishes on that component; in other words, any minimal prime of $I$ containing $\Delta$ must contain either $J_n$ or each maximal minor of $Y$. Since $a<n$, one has
\[
\dim V(I)\ =\ dn-\binom{n+1}{2}-a\ >\ \dim V(I + J_n)\ =\ (d-1)n-\binom{n+1}{2},
\]
so no minimal prime of $I$ contains $J_n$. It follows that any minimal prime of $I$ that contains~$\Delta$ also contains $I_n(Y)$.

Let $Y'$ be the submatrix consisting of the first $n-1$ columns of $Y$, and consider the ideal
\[
I'\colonequals (Y'^\tr Y')+ (y_{11},\ y_{12},\ \dots,\ y_{1a})
\]
of $K[Y']$. Viewing a point of $V(I')$ as columns $(v_1,\dots,v_{n-1})$, the image of the map 
\begin{alignat*}2
V(I') & \times K^{n-1}\ && \to\ V(I+I_n(Y))\\
((v_1,\dots,v_{n-1})&\ \,,\ (c_1,\dots,c_{n-1}))\ && \mapsto\ (v_1,\dots,v_{n-1},\sum c_iv_i)
\end{alignat*}
includes the open subset of $V(I+I_n(Y))$ where the first $n-1$ columns are linearly independent. Hence
\[
\dim V(I+I_n(Y))\ \le\ \dim V(I')+(n-1)\ =\ d(n-1)-\binom{n}{2} - a +(n-1)\ <\ \dim V(I).
\]
It follows that a minimal prime of $I$ cannot contain $I_n(Y)$, and hence that $\Delta$ is not in any minimal prime of $I$. The completes the proof that $\Delta$ is a nonzerodivisor on $S/I$. In light of this, there is a bijection between the minimal primes of $S/I$ and those of $S_\Delta/I$.

Write the matrix $Y$ as $\begin{pmatrix}Y_1\\ Y_2\end{pmatrix}$, where $Y_1$ is the upper $n\times n$ submatrix, so that $\Delta=\det Y_1$. Since~$Y_1$ is an invertible matrix over $S_\Delta$, one has
\[
S_\Delta\ =\ K[Y_1,\ Y_2]_\Delta \ =\ K[Y_1,\ Y_2Y_1^{-1}]_\Delta,
\]
so the entries of $Y_2Y_1^{-1}$, and hence of $Z\colonequals iY_2Y_1^{-1}$, are algebraically independent over the fraction field of $K[Y_1]$. Since
\[
YY_1^{-1}\ =\ \begin{pmatrix}Y_1\\ Y_2\end{pmatrix}Y_1^{-1}\ =\ \begin{pmatrix}\1\\ -iZ\end{pmatrix},
\]
the ideal $(Y^\tr Y)S_\Delta$ agrees with the ideal generated by the entries of
\[
(Y_1^{-1})^\tr Y^\tr YY_1^{-1}\ =\
\begin{pmatrix}\1&-iZ^\tr\end{pmatrix}\begin{pmatrix}\1\\ -iZ\end{pmatrix}\ =\ \1-Z^\tr Z,
\]
i.e., $S_\Delta/(Y^\tr Y)=K[Y_1,\ Z]_\Delta/(Z^\tr Z-\1)$. As $J_a$ is generated by indeterminates from the matrix $Y_1$, the minimal primes of $S_\Delta/I$ correspond to those of $K[Z]/(Z^\tr Z-\1)$, and it suffices to prove the theorem in the case $a=0$. 

If $n<d/2$, one has $n<d-n$, so $K[Z]/(Z^\tr Z-\1)$ is a domain by Remark~\ref{remark:orthogonal}, completing the proof of~(1). When $n=d/2$, the matrix $Z$ is $n\times n$, so $V(Y^\tr Y)$ has two irreducible components corresponding to the two components of $\O_n(K)=V(Z^\tr Z-\1)$, though it remains to verify that these are precisely~$V(\frakP)$ and $V(\frakQ)$.

The homomorphism
\[
K[Y]\ =\ K[Y_1,\ Y_2]\ \to\ K[Y_1,\ Z]/(Z^\tr Z-\1)
\]
with $Y_2\mapsto -iZY_1$ kills $(Y^\tr Y)$, giving a homomorphism
\[
K[Y]/(Y^\tr Y)\ \to\ K[Y_1,\ Z]/(Z^\tr Z-\1),
\]
that is an isomorphism upon inverting $\Delta$. Since $\Delta$ is nonzerodivisor in $K[Y]/(Y^\tr Y)$, the ideal $(Y^\tr Y)$ is radical. The homomorphism above gives a map
\begin{alignat*}2
\AA^{\!n^2} &\times \O_n(K) &\ \to & \ \ V(Y^\tr Y)\\
(A &\ \,,\, Q)\ & \mapsto & \ \begin{pmatrix}A\\ -iQA\end{pmatrix}.
\end{alignat*}
Using Lemma~\ref{lemma:alpha}, the matrix $\begin{pmatrix}A\\ -iQA\end{pmatrix}$ lies in the algebraic set $V(\frakP)$ if $Q\in\SO_n(K)$, and in the algebraic set $V(\frakQ)$ otherwise. Hence the map displayed above restricts to maps
\[
\AA^{\!n^2}\times\SO_n(K)\ \to\ V(\frakP)
\quad\text{ and }\quad
\AA^{\!n^2}\times\O_n(K)\smallsetminus\SO_n(K)\ \to\ V(\frakQ).
\]
Since $V(\frakP)\cup V(\frakQ)$ contains $V(Y^\tr Y)\smallsetminus V(\Delta)$, we have
\[
\frakP\cap\frakQ\ \subseteq\ (Y^\tr Y)K[Y]_\Delta.
\]
Using again that $\Delta$ is nonzerodivisor in $K[Y]/(Y^\tr Y)$, it follows that $\frakP\cap\frakQ=(Y^\tr Y)$.
\end{proof}

\begin{corollary}
\label{corollary:symmetric:components}
Let $Y$ be a $2t\times n$ matrix of indeterminates over an algebraically closed field of characteristic other than two. Then the algebraic set $V(Y^\tr Y)$ equals $V(\frakP)\cup V(\frakQ)$.
\end{corollary}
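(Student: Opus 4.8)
The plan is to reduce the assertion to the already-established square case $2t\times t$ --- which is precisely Proposition~\ref{proposition:symmetric:irreducible}(2) with $a=0$ --- by a column-reduction argument; throughout we are in the setting $t\le n$ of Definition~\ref{definition:p:q}. The inclusion $V(\frakP)\cup V(\frakQ)\subseteq V(Y^\tr Y)$ is immediate, since both $\frakP$ and $\frakQ$ contain $\frakS$, which in turn contains $(Y^\tr Y)$; so the work is entirely in the reverse inclusion.

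To prove $V(Y^\tr Y)\subseteq V(\frakP)\cup V(\frakQ)$, I would fix a point $M\in V(Y^\tr Y)$, i.e.\ a $2t\times n$ matrix over $K$ with $M^\tr M=0$. Its columns span an isotropic subspace of $K^{2t}$ for the form $(u,v)\mapsto u^\tr v$, so $\rank M\le t$; hence $M\in V(I_{t+1}(Y))$, and therefore $M\in V(\frakS)$. If $\rank M<t$, then every size $t$ minor of $M$ vanishes, so $M$ satisfies all the additional generators of both $\frakP$ and $\frakQ$, and $M\in V(\frakP)$ already. The remaining (and main) case is $\rank M=t$. Here I would choose a size $t$ set of columns $\beta_0$ of $M$ that are linearly independent, and let $N$ be the $2t\times t$ submatrix of $M$ on the columns $\beta_0$; then $N^\tr N=0$ and $\rank N=t$. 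Applying Proposition~\ref{proposition:symmetric:irreducible}(2) to a $2t\times t$ matrix of indeterminates with $a=0$ shows that $N\in V(\frakP_{2t\times t})\cup V(\frakQ_{2t\times t})$; interchanging $\frakP$ and $\frakQ$ if necessary, assume $N\in V(\frakP_{2t\times t})$, that is,
\[
\det( N_{\alpha|\{1,\dots,t\}} )\ =\ i^t\sgn(\alpha)\det( N_{\alpha^\comp|\{1,\dots,t\}} )\qquad\text{for every size }t\text{ subset }\alpha\subseteq\{1,\dots,2t\}.
\]

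The final step is to propagate these relations from the $t$ columns in $\beta_0$ to all $n$ columns. Since those columns form a basis of the column space of $M$, one can write $M=NC$ for a suitable $t\times n$ matrix $C$. Then for size $t$ subsets $\alpha\subseteq\{1,\dots,2t\}$ and $\beta\subseteq\{1,\dots,n\}$ one has the factorization of $t\times t$ matrices $M_{\alpha|\beta}=N_{\alpha|\{1,\dots,t\}}\,C_{\{1,\dots,t\}|\beta}$, so $\det(M_{\alpha|\beta})=\det(N_{\alpha|\{1,\dots,t\}})\det(C_{\{1,\dots,t\}|\beta})$, and likewise with $\alpha^\comp$ in place of $\alpha$. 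Multiplying the displayed identity for $N$ through by $\det(C_{\{1,\dots,t\}|\beta})$ gives $\det(M_{\alpha|\beta})=i^t\sgn(\alpha)\det(M_{\alpha^\comp|\beta})$ for all such $\alpha,\beta$; combined with $M\in V(\frakS)$, this shows $M\in V(\frakP)$ (and, in the other alternative, $M\in V(\frakQ)$). Hence $M\in V(\frakP)\cup V(\frakQ)$, which completes the reverse inclusion.

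I expect no serious obstacle beyond careful bookkeeping of the index sets: the two points to verify are that $\sgn(\alpha)$ depends only on $\alpha$ as a subset of $\{1,\dots,2t\}$ and so is unchanged in passing between the $2t\times t$ and $2t\times n$ settings, and that the factorization $M=NC$ makes the relevant minor identities multiplicative in the column index. The one substantive input --- that a rank $t$ matrix $N$ with $N^\tr N=0$ must lie on one of the two components --- is exactly what Proposition~\ref{proposition:symmetric:irreducible} supplies, so the real work has already been done.
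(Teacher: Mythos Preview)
Your proof is correct and follows essentially the same route as the paper: one containment is trivial, the rank-deficient case is immediate, and in the rank $t$ case you reduce to the square $2t\times t$ situation handled by Proposition~\ref{proposition:symmetric:irreducible}(2), then propagate via the factorization $M=NC$. The paper's proof is terser---it simply asserts that linear combinations of the columns of $M|_t$ inherit membership in $V(\frakP)$ or $V(\frakQ)$---whereas you spell out the multiplicativity $\det(M_{\alpha|\beta})=\det(N_{\alpha})\det(C_{|\beta})$ that makes this work; this extra detail is helpful and entirely sound.
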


\begin{proof}
One containment is immediate as the ideals $\frakP$ and $\frakQ$ contain $(Y^\tr Y)$. Let $M$ be a matrix in $V(Y^\tr Y)$. If $M$ has rank less than $t$, then it belongs to each of $V(\frakP)$ and $V(\frakQ)$. In the remaining case, $M$ has rank exactly $t$; assume without loss of generality that the first $t$ columns of $M$ are linearly independent. Then the $2t\times t$ submatrix $M|_t$ belongs to $V(\frakP|_t)$ or $V(\frakQ|_t)$ by Proposition~\ref{proposition:symmetric:irreducible}. Since the remaining columns of $M$ are linear combinations of the columns of $M|_t$, it follows that $M$ belongs to $V(\frakP)$ or $V(\frakQ)$.
\end{proof}

We now set up the principal radical system needed to study the ideals $\frakP$, $\frakQ$, and $\frakS$. Let~$Y$ be a $d\times n$ matrix of indeterminates over $K$; recall that
\[
\frakS\colonequals (Y^\tr Y) + I_{t+1}(Y),
\]
where~$t\colonequals\lfloor d/2\rfloor$. Let $\sigma\colonequals(s_0,s_1,s_2,\dots,s_m)$ be a sequence of integers with $0\le s_k\le n$ for each $k$, and $s_m=n$. Set
\[
I_\sigma\colonequals\frakS + I_1\big(Y|_{s_0}\big) + I_2\big(Y|_{s_1}\big) + I_3\big(Y|_{s_2}\big) + \dots + I_{m+1}\big(Y|_{s_m}\big),
\]
where, as earlier, $I_{k+1}\big(Y|_{s_k}\big)$ denotes the ideal generated by the size $k+1$ minors of the submatrix consisting of the first $s_k$ columns of $Y$. If $d=2t$, set
\[
I'_\sigma\colonequals\frakP + I_1\big(Y|_{s_0}\big) + I_2\big(Y|_{s_1}\big) + I_3\big(Y|_{s_2}\big) + \dots + I_{m+1}\big(Y|_{s_m}\big)
\]
and
\[
I''_\sigma\colonequals\frakQ + I_1\big(Y|_{s_0}\big) + I_2\big(Y|_{s_1}\big) + I_3\big(Y|_{s_2}\big) + \dots + I_{m+1}\big(Y|_{s_m}\big).
\]
Note that if $m<t$, then both $I'_\sigma$ and $I''_\sigma$ contain $I_t(Y)$, and hence equal $I_\sigma$.

We say $\sigma$ is \emph{standard} if
\[
0=s_0 < s_1 < s_2 < \dots < s_m=n,\quad\text{ and }\quad m\le t.
\]
For integers $a$ with $0\le a\le n$, set
\[
J_a\colonequals (y_{11},\ y_{12},\ \dots,\ y_{1a}).
\]

Suppose $\sigma\colonequals(s_0,s_1,s_2,\dots,s_m)$ is standard, $d=2t$, $m=t$, and $s_{m-1}<a<s_m$. Define
\[
\sigma'\colonequals(s_0,s_1,s_2,\dots,s_{m-2},a,s_m).
\]
We claim that
\[
I'_\sigma+J_a\ =\ I'_{\sigma'}+J_a
\quad\text{ and }\quad
I''_\sigma+J_a\ =\ I''_{\sigma'}+J_a.
\]
For the first equality, it suffices to verify that 
\[
I_t(Y|_a)\ \subseteq\ \frakP + J_a,
\]
which holds by~\eqref{equation:p:q:containment} since the first row of $Y|_a$ is zero modulo $J_a$. The second is similar.

With the notation as above, we prove:

\begin{theorem}
\label{theorem:symmetric:prs}
Let $Y$ be a~$d\times n$ matrix of indeterminates over an algebraically closed field $K$ of characteristic other than two, and set $S\colonequals K[Y]$. Let $\sigma\colonequals(s_0,s_1,s_2,\dots,s_m)$ be a sequence of integers with $0\le s_k\le n$ for each $k$, and $s_m=n$. Fix $a$ with $0\le a\le n$.

\begin{enumerate}[\quad \rm(1)]
\item Suppose $\sigma$ is standard and $a=s_k$ where $0\le k\le m-1$. If $d$ is odd, then $V(I_\sigma + J_a)$ is irreducible; if $d$ is even, then $V(I'_\sigma + J_a)$ and $V(I''_\sigma + J_a)$ are irreducible.

\item The ideal $I_\sigma+J_a$ is radical; if $d$ is even, the ideals $I'_\sigma+J_a$ and $I''_\sigma+J_a$ are radical.

\item Suppose $\sigma$ is standard and $a=s_k$ where $0\le k\le m-1$. If $d$ is odd, then $I_\sigma+J_a$ defines a Cohen-Macaulay integral domain; if $d$ is even, $I'_\sigma + J_a$ and $I''_\sigma + J_a$ both define Cohen-Macaulay integral domains. In each case, the domain has dimension
\[
m(d+n-m-1)-k-\sum_{j=1}^{m-1}s_j.
\]
\end{enumerate}
\end{theorem}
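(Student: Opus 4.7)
The plan is to mirror closely the structure of the proof of Theorem~\ref{theorem:pfaffian:nullcone}, substituting Proposition~\ref{proposition:symmetric:irreducible} for Proposition~\ref{proposition:irreducible:pfaffian}, Lemma~\ref{lemma:functional:symmetric} for Lemma~\ref{lemma:functional:symplectic}, and Theorem~\ref{theorem:symmetric:ci} for Corollary~\ref{corollary:pfaffian:ci}. One may assume $K$ algebraically closed. When $d$ is odd only the assertions about $I_\sigma$ appear; when $d=2t$ and $m<t$ one has $I_t(Y)\subseteq I_{m+1}(Y)\subseteq I_\sigma$, so $I'_\sigma=I''_\sigma=I_\sigma$ and again only $I_\sigma$ requires argument. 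The substantive content of the even case arises only when $d=2t$ and $m=t$, where the branches $I'_\sigma$ and $I''_\sigma$ must be tracked in parallel throughout the induction.

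For part~(1), with $\sigma$ standard and $a=s_k$, I consider $d\times m$ matrices $B$ whose columns span a nested chain $V_1\subset\dots\subset V_m$ of isotropic subspaces of $K^d$ and for which the first $k$ entries of row~$1$ vanish. By Proposition~\ref{proposition:symmetric:irreducible}, the space $\VV_0$ of such matrices $B$ is an irreducible variety when $d$ is odd or when $d=2t$ and $m<t$, and consists of exactly two irreducible components---corresponding respectively to $\frakP$ and $\frakQ$---when $d=2t$ and $m=t$. Letting $\VV_1$ denote the affine space of tuples $(C_1,\dots,C_m)$ with $C_j$ of size $j\times(s_j-s_{j-1})$, the concatenation
\[
(B;\,C_1,\dots,C_m)\ \mapsto\ (B|_1C_1)\,\#\,\cdots\,\#\,(B|_mC_m)
\]
defines a map from $\VV_0\times\VV_1$ onto $V(I_\sigma+J_{s_k})$, respectively onto $V(I'_\sigma+J_{s_k})$ and $V(I''_\sigma+J_{s_k})$ in the split case. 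Surjectivity follows from Lemma~\ref{lemma:functional:symmetric}, applied to the linear functional given by projection to the first coordinate, which extends the column spans of truncations of any target matrix to an isotropic chain on which the functional vanishes at index~$k$; the irreducibility conclusion is then immediate.

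Parts~(2) and~(3) are established by simultaneous induction on matrix size and, within a fixed size, on the size of the ideal in the family, treating larger ideals first and applying Lemmas~\ref{lemma:prs} and~\ref{lemma:CM}. For radicality of $I_\sigma+J_a$ in the generic case $s_k<a<s_{k+1}$, take $x\colonequals y_{1,a+1}$; Lemma~\ref{lemma:minors} yields
\[
y_{1,a+1}I_{k+1}(Y|_a)\ \subseteq\ I_{k+2}(Y|_{a+1})+J_a\ \subseteq\ I_\sigma+J_a,
\]
so $xP\subseteq I$ with $P\colonequals\rad I$, while specializing so that column~$a+1$ of $Y$ is a nonzero isotropic vector with first coordinate $1$ and all other columns vanish exhibits a matrix in $V(I)\smallsetminus V(I+xS)$ showing $x\notin P$; Lemma~\ref{lemma:prs} then delivers radicality, and part~(1) upgrades this to primality when $a=s_k$. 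The boundary case $a=n$ reduces via the substitution $y_{1j}\mapsto 0$ to the principal radical system for the $(d-1)\times n$ matrix obtained by deleting the first row of $Y$. For the Cohen-Macaulay step, given $I_\sigma+J_{s_k}$ with $k\le m-1$, the element $y_{1,s_k+1}$ is a nonzerodivisor; when $s_k+1<s_{k+1}$ one has the minimal-prime decomposition
\[
I_\sigma+J_{s_k+1}\ =\ Q_1\cap Q_2,\qquad Q_1\colonequals I_\sigma+J_{s_{k+1}},\quad Q_2\colonequals I_{\sigma'}+J_{s_k+1},
\]
with $\sigma'\colonequals(s_0,\dots,s_{k-1},s_k+1,s_{k+1},\dots,s_m)$; Lemma~\ref{lemma:minors} gives the containment $Q_1Q_2\subseteq I_\sigma+J_{s_k+1}$, the inductive hypothesis applies to each $S/Q_i$ and to $S/(Q_1+Q_2)=S/(I_{\sigma'}+J_{s_{k+1}})$, and Lemma~\ref{lemma:CM} concludes. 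Combining the contribution $md-\binom{m+1}{2}-k$ from the base matrices $B$ (via Theorem~\ref{theorem:symmetric:ci}) with $\sum_{j=1}^{m}j(s_j-s_{j-1}-1)$ from the coefficient matrices $C_j$ produces the stated dimension $m(d+n-m-1)-k-\sum_{j=1}^{m-1}s_j$.

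The main obstacle will be the even case $d=2t$ with $m=t$, where one must verify that the branches $I'_\sigma$ and $I''_\sigma$ remain separate throughout the induction, and in particular that the two minimal primes $Q_1$, $Q_2$ appearing in the Cohen-Macaulay step always lie in the same branch (both under $\frakP$, or both under $\frakQ$). The identity~\eqref{equation:p:q:containment}---expressing that modulo $J_n$ the two branches merge---is the key structural fact controlling this bookkeeping, and it also explains how the $a=n$ reduction to a $(d-1)\times n$ matrix threads into the odd-parity branch of the induction without obstruction.
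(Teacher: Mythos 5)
Your parts (1) and (2) track the paper's argument correctly: the irreducibility proof via the surjection $\VV_0\times\VV_1\to\VV$ using Proposition~\ref{proposition:symmetric:irreducible} and Lemma~\ref{lemma:functional:symmetric}, and the radicality induction with $x=y_{1,a+1}$ and an isotropic witness column, are exactly what the paper does (the paper specializes $y_{1,a+1}\mapsto 1$, $y_{2,a+1}\mapsto\pm i$, with the sign chosen according to the $\frakP$/$\frakQ$ branch). The interlacing of the three families across parities via~\eqref{equation:p:q:containment} and Corollary~\ref{corollary:symmetric:components} is also correctly identified.

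There is, however, a genuine gap in part (3), and you have located the difficulty in the wrong place. Your two-prime decomposition $I_\sigma+J_{s_k+1}=Q_1\cap Q_2$ with $Q_1=I_\sigma+J_{s_{k+1}}$ and $Q_2=I_{\sigma'}+J_{s_k+1}$ is valid only when $Q_1$ is prime, which fails precisely when $d$ is odd, $m=\lfloor d/2\rfloor$, $k=m-1$, and $s_k+1<n$: in that situation $s_{k+1}=n$, and $I_\sigma+J_n$ is \emph{not} prime --- by Corollary~\ref{corollary:symmetric:components} it is the intersection of the two primes $I_\sigma+J_n+\frakP$ and $I_\sigma+J_n+\frakQ$ attached to the lower $2m\times n$ submatrix. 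So $I_\sigma+J_{s_k+1}$ has \emph{three} minimal primes, and Lemma~\ref{lemma:CM} cannot be applied in one step. The paper handles this (its Case~(v)) by first showing $S/(Q_1\cap Q_3)$ is Cohen-Macaulay, then intersecting with $Q_2$ after verifying the nontrivial identity $(Q_1\cap Q_3)+Q_2=Q_3+Q_2$, which ultimately rests on the decomposition $I_m(Y'|_{a+1})=\frakP_{2m\times a+1}(Y'|_{a+1})+\frakQ_{2m\times a+1}(Y'|_{a+1})$. The adjacent boundary case $a+1=n$ with $m=\lfloor d/2\rfloor$ (the paper's Case~(iv)) likewise needs the two-prime $\frakP$/$\frakQ$ decomposition of the lower submatrix rather than your $Q_1,Q_2$. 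By contrast, the even case $d=2t$ with $m=t$ that you flag as the main obstacle is in fact the easy one: there the branches are already separated, each of $I'_\sigma$ and $I''_\sigma$ behaves exactly like the generic Case~(i), and no extra case analysis at $k=m-1$ is needed.
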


\begin{proof}
Let $\VV$ denote one of the algebraic sets $V(I_\sigma+J_{s_k})$ or $V(I'_\sigma+J_{s_k})$ or $V(I''_\sigma+J_{s_k})$ under the hypotheses of~(1); we first prove that $\VV$ is irreducible. Take $\VV_0$ to be the set of~$d\times m$ matrices lying in either $V(\frakS_{d\times m})$ or $V(\frakP_{d\times m})$ or~$V(\frakQ_{d\times m})$, in the respective cases, with the additional condition that the first~$k$ entries of the first row are~$0$. Note that~$m\le d/2$ and~$k<m$, so $\VV_0$ is irreducible by Proposition~\ref{proposition:symmetric:irreducible}.

Let $B$ be an element of $\VV_0$. For $1\le j\le m$, let $C_j$ be a matrix of size $j\times(s_j-s_{j-1})$, and set $A$ to be the matrix
\begin{equation}
\label{equation:symmetric:irreducible}
(B|_1C_1)\,\#\,(B|_2C_2)\,\#\,\cdots\,\#\,(B|_mC_m),
\end{equation}
where $\#$ denotes concatenation; it is readily seen that $A$ is an element of the algebraic set $\VV$. The matrices $C_1,\dots,C_m$ may be regarded as the points of an affine space~$\VV_1$ of dimension
\[
\sum_{j=1}^m j(s_j-s_{j-1}),
\]
so that the construction~\eqref{equation:symmetric:irreducible} gives a map $\VV_0\times \VV_1\to\VV$. Since the image of an irreducible algebraic set is irreducible, it suffices to verify that this map is surjective.

Let $A$ be a matrix in the algebraic set $\VV$. For $1\le j\le m$, let $V_j$ denote the span of the columns of the truncated matrix $A|_{s_j}$. Consider the linear functional $L$ that is projection to the first coordinate, and the symmetric bilinear form is as defined in~\eqref{equation:symmetric:form}. By Lemma~\ref{lemma:functional:symmetric}, there exist isotropic subspaces
\[
W_1 \subset W_2 \subset \dots \subset W_m
\]
such that $V_j\subseteq W_j$ for each $j$, and $W_j$ has rank $j$. Consider a size $d\times m$ matrix $B$ such that~$B|_j$ spans $W_j$ for each $j$. Then the columns of $A|_{s_j}$ belong to the column span of $B|_j$ for each $j$, so there exist matrices $C_j$ using which $A$ may be obtained as in~\eqref{equation:symmetric:irreducible}. This concludes the proof of (1).

Set $I^*_\sigma$ to be one of $I_\sigma, I'_\sigma, I''_\sigma$, and $I\colonequals I^*_\sigma+J_a$. To show $I$ is radical or prime, we assume the result for matrices $Y$ of smaller size, as well as for larger ideals in the family, and apply Lemma~\ref{lemma:prs}. The three families are interlaced in the inductive process, since
\[
K[Y_{2t\times n}]/(\frakP_{2t\times n}+J_n)\ \cong\ K[Y_{2t-1\times n}]/\frakS_{2t-1\times n}
\ \cong\ K[Y_{2t\times n}]/(\frakQ_{2t\times n}+J_n)
\]
using~\eqref{equation:p:q:containment}, and Corollary~\ref{corollary:symmetric:components} gives
\[
K[Y_{2t+1\times n}]/\rad(\frakS_{2t+1\times n}+J_n)\ \cong\ K[Y_{2t\times n}]/\rad(\frakP_{2t\times n}\cap\frakQ_{2t\times n}).
\]

Assume $a<n$, since otherwise $K[Y]/I$ effectively involves a matrix of size $(d-1)\times n$. In applying Lemma~\ref{lemma:prs}, set
\[
x\colonequals y_{1,a+1}.
\]
Specializing $Y$ such that $y_{1,a+1}\mapsto 1$, and $y_{2,a+1}\mapsto \pm i$, and every other entry maps to~$0$, we obtain a matrix in $V(I)\smallsetminus V(I+xS)$; the choice of sign in $\pm i$ is relevant when $d=2$, and depends on whether $I$ contains $\frakP$ or $\frakQ$. It follows that $I+xS$ is a strictly larger ideal: in particular, for $a<n-1$ we have 
\[
I+xS\ =\ I^*_\sigma + J_{a+1},
\]
and for $a=n-1$ we have $I+xS = I^*_\sigma + J_n$, which effectively puts us in the case of a smaller matrix. Hence, in each case, $I+xS$ is radical by the inductive hypothesis. If $a$ is as in~(1), the ideal $P\colonequals \rad I$ is prime; since~$x\notin P$, Lemma~\ref{lemma:prs} implies that $I=P$, and hence that $I$ is prime. Else there exists an integer $k$ with~$s_k<a<s_{k+1}$. Set
\[
\sigma'\colonequals(s_0,s_1,\dots,s_{k-1},a,s_{k+1},\dots,s_m),
\]
and take $P$ to be the prime $I^*_{\sigma'}+J_a$; if $k=0$, then~$\sigma'=(a,s_1,\dots,s_m)$ is not standard, but the primality still holds from the case of a smaller matrix. The specialization used earlier shows that $x\notin P$. To conclude that $I$ is radical by Lemma~\ref{lemma:prs}, it remains to verify that~$xP\subseteq I$. For this, note that
\[
y_{1,a+1}I_{k+1}(Y|_a)\ \subseteq\ I_{k+2}(Y|_{a+1})+J_a\ \subseteq\ I_{k+2}(Y|_{s_{k+1}})+J_a\ \subseteq\ I,
\]
where the first inclusion is using Lemma~\ref{lemma:minors}.

For $a$ as in (3), we next compute the dimension of the algebraic set $\VV\colonequals V(I_\sigma+J_a)$. Consider the open subset $U$ of $\VV$ in which each matrix has the property that the submatrix consisting of the columns indexed
\begin{equation}
\label{equation:symmetric:column:set}
s_0+1,\ s_1+1,\ \dots,\ s_{m-1}+1
\end{equation}
has rank exactly $m$. This open set $U$ is nonempty hence dense, for it contains the matrix in which the columns indexed~\eqref{equation:symmetric:column:set} are the first $m$ columns of the matrix
\[
\begin{pmatrix}
0 & 0 & \hdots & 0\\
1 & 0 & \hdots & 0\\
i & 0 & \hdots & 0\\
0 & 1 & & 0\\
0 & i & & 0\\
\vdots & & & \\
0 & 0 & & 1\\
0 & 0 & & i
\end{pmatrix}
\quad\text{ or }\quad
\begin{pmatrix}
0 & 0 & \hdots & 0 & 1\\
0 & 0 & \hdots & 0 & i\\
1 & 0 & \hdots & 0 & 0\\
i & 0 & \hdots & 0 & 0\\
0 & 1 & & 0 & 0\\
0 & i & & 0 & 0\\
\vdots & & & & \vdots\\
0 & 0 & & 1 & 0\\
0 & 0 & & i & 0
\end{pmatrix},
\]
depending on whether $d$ is odd or even, respectively, and the remaining columns are zero.

It suffices to compute the dimension of $U$. Given a matrix $A$ in the $U$, let $B$ denote the~$d\times m$ submatrix consisting of the columns indexed~\eqref{equation:symmetric:column:set}. For each $j$ with $1\le j\le m$, the submatrix $D_j$ of $A$ consisting of the columns indexed $s_{j-1}+1,\dots,s_j$ can be uniquely written as a linear combination of the columns of $B|_j$. The coefficients needed comprise the columns of a size $j\times (s_j-s_{j-1})$ matrix that we denote $C_j$. The first column of $C_j$ is
\[
(0,\ 0,\ \dots,\ 0,\ 1)^\tr
\]
while the other $j(s_j-s_{j-1}-1)$ entries are arbitrary scalars. By Proposition~\ref{proposition:symmetric:irreducible}, the matrices~$B$ vary in a space of dimension
\[
dm-\binom{m+1}{2}-k,
\]
so $U$ has dimension
\begin{multline*}
dm-\binom{m+1}{2}-k+1(s_1-s_0-1)+2(s_2-s_1-1)+\dots+m(s_m-s_{m-1}-1) \\ 
=\ m(d+n-m-1)-k-\sum_{j=1}^{m-1}s_j.
\end{multline*}
It follows that $V(I_\sigma+J_a)$ has the dimension as claimed; when $d$ is even, $V(I'_\sigma + J_a)$ and~$V(I''_\sigma + J_a)$ also have the dimension displayed above.

The proof of the Cohen-Macaulay property is again via induction, assuming the result for smaller matrices and for larger ideals in the family. Consider first a prime ideal of the form $I_\sigma+J_a$, where $a\colonequals s_k<n$, and $d$ is odd. Since the element $y_{1,a+1}$ is a nonzerodivisor on~$S/(I_\sigma+J_a)$, it suffices to verify that
\[
S/(I_\sigma+J_a+y_{1,a+1}S)\ =\ S/(I_\sigma+J_{a+1})
\]
is a Cohen-Macaulay ring. The proof of this, for $d$ odd, is split into five cases:

Case (i): Suppose $k\le m-2$. If $a+1=s_{k+1}$, then~$S/(I_\sigma+J_{a+1})$ is Cohen-Macaulay by the inductive hypothesis. If $a+1<s_{k+1}$, we claim that $I_\sigma+J_{a+1}$ is the intersection of the prime ideals
\[
Q_1\colonequals I_\sigma+J_{s_{k+1}}\quad\text{ and }\quad Q_2\colonequals I_{\sigma'}+J_{a+1},
\]
where $\sigma'\colonequals(s_0,s_1,\dots,s_{k-1},a+1,s_{k+1},\dots,s_m)$; if $k=0$, then $Q_2$ is prime by the case of a matrix of size $d\times(n-1)$. Since $I_\sigma+J_{a+1}$ is radical and contained in each $Q_i$, it suffices to verify that
\[
Q_1Q_2\ \subseteq\ I_\sigma+J_{a+1},
\]
which comes down to
\[
J_{s_{k+1}}I_{\sigma'}\ \subseteq\ I_\sigma+J_{a+1}.
\]
This is straightforward, since for each $b$ with $b\le s_{k+1}$, one has
\[
y_{1b}I_{k+1}(Y|_{a+1})\ \subseteq\ I_{k+2}(Y|_{s_{k+1}}) + J_{a+1}\ \subseteq\ I_\sigma+J_{a+1}
\]
using Lemma~\ref{lemma:minors}. By the inductive hypothesis, each prime $Q_i$ defines a Cohen-Macaulay ring $S/Q_i$. Moreover,
\[
Q_1+Q_2\ =\ I_{\sigma'}+J_{s_{k+1}}
\]
is prime, and Lemma~\ref{lemma:CM} applies since
\[
\dim S/Q_1\ =\ \dim S/Q_2\ =\ m(d+n-m-1)-k-1-\sum_{j=1}^{m-1}s_j\ =\ \dim S/(Q_1+Q_2)+1.
\]
It follows that
\[
S/(I_\sigma+J_{a+1})\ =\ S/(Q_1\cap Q_2)
\]
is Cohen-Macaulay.

Case (ii): Next suppose $k=m-1$, and $m<\lfloor d/2\rfloor$, and $a+1=n$. Set $t\colonequals (d-1)/2$, and let $Y'$ denote the lower $2t\times n$ submatrix of~$Y$. Since $I_\sigma$ contains the ideal $I_{m+1}(Y)$ and hence $I_{t}(Y)$, it follows that $I_\sigma + J_n$ contains $\frakP_{2t\times n}(Y')$ and $\frakQ_{2t\times n}(Y')$ by~\eqref{equation:p:q:in:I_t}. But then
\[
S/(I_\sigma + J_n)\ =\ K[Y']/(I'_\sigma)\ =\ K[Y']/(I''_\sigma),
\]
which is Cohen-Macaulay by the case of a smaller matrix.

Case (iii): Suppose $k=m-1$, and $m<\lfloor d/2\rfloor$, and $a+1<n$. Then $I_\sigma+J_{a+1}$ is the intersection of the prime ideals
\[
Q_1\colonequals I_\sigma+J_n\quad\text{ and }\quad Q_2\colonequals I_{\sigma'}+J_{a+1},
\]
where $\sigma'\colonequals(s_0,s_1,\dots,s_{k-1},a+1,s_{k+1},\dots,s_m)$. The ring $S/Q_1$ is Cohen-Macaulay by the case of a smaller matrix, and the proof proceeds along the lines of~Case (i).

Case (iv): Suppose $k=m-1$, and $m=\lfloor d/2\rfloor$, and $a+1=n$. Then
\[
I_\sigma+J_{a+1}\ =\ I_\sigma+J_n
\]
is the intersection of the prime ideals
\[
Q_1\colonequals I_\sigma+J_n+\frakP\quad\text{ and }\quad Q_2\colonequals I_\sigma+J_n+\frakQ,
\]
where $\frakP\colonequals\frakP_{2m\times n}(Y')$ and $\frakQ\colonequals\frakQ_{2m\times n}(Y')$, with $Y'$ the lower $2m\times n$ submatrix of~$Y$. The rings $S/Q_1$ and $S/Q_2$ are Cohen-Macaulay by the inductive hypothesis, of dimension
\[
m(m+n-1)-\sum_{j=1}^{m-1}s_j.
\]
Moreover,
\[
Q_1+Q_2\ =\ I_\sigma+J_n+\frakP+\frakQ\ =\ I_\sigma+J_n+I_m(Y)\ =\ I_{\sigma'}+J_n,
\]
where $\sigma'\colonequals(s_0,s_1,\dots,s_{m-2},n)$, so $S/(Q_1+Q_2)$ is Cohen-Macaulay of dimension
\[
(m-1)(m+n)-\sum_{j=1}^{m-2}s_j.
\]
But then
\[
\dim S/Q_1\ =\ \dim S/Q_2\ =\ \dim S/(Q_1+Q_2)+1,
\]
so Lemma~\ref{lemma:CM} implies that $S/(Q_1\cap Q_2)$ is Cohen-Macaulay.

Case (v): Lastly, suppose $k=m-1$, and $m=\lfloor d/2\rfloor$, and $a+1<n$. We claim that the ideal $I_\sigma+J_{a+1}$ is the intersection of three prime ideals
\[
Q_1\colonequals I_\sigma+J_n+\frakP,\qquad Q_2\colonequals I_\sigma+J_n+\frakQ, \qquad Q_3\colonequals I_{\sigma'}+J_{a+1},
\]
where $\frakP\colonequals\frakP_{2m\times n}(Y')$ and $\frakQ\colonequals\frakQ_{2m\times n}(Y')$, with $Y'$ the lower $2m\times n$ submatrix of~$Y$,~and
\[
\sigma'\colonequals(s_0,s_1,\dots,s_{m-2},a+1,s_m).
\]
Since $Q_1\cap Q_2=I_\sigma+J_n$, and $I_\sigma+J_{a+1}$ is radical, it suffices to verify that
\[
(I_\sigma+J_n)(I_{\sigma'}+J_{a+1})\ \subseteq\ I_\sigma+J_{a+1},
\]
which is a now-routine application of Lemma~\ref{lemma:minors}.

Towards proving that $S/(Q_1\cap Q_2\cap Q_3)$ is Cohen-Macaulay, first note that
\[
Q_1+Q_3\ =\ I_{\sigma'}+J_n+\frakP\quad\text{ and }\quad Q_2+Q_3\ =\ I_{\sigma'}+J_n+\frakQ
\]
so the dimension formula proved earlier gives
\[
\dim S/Q_1\ =\ \dim S/Q_2\ =\ \dim S/Q_3\ =\ \dim S/(Q_1+Q_3)+1 =\ \dim S/(Q_2+Q_3)+1.
\]
Lemma~\ref{lemma:CM} implies that $S/(Q_1\cap Q_3)$ is Cohen-Macaulay. Next, we claim that
\[
(Q_1\cap Q_3)+Q_2\ =\ Q_3+Q_2.
\]
Assuming the claim, one has
\[
\dim S/(Q_1\cap Q_3)\ =\ \dim S/Q_2\ =\ \dim S/((Q_1\cap Q_3)+Q_2)+1,
\]
so Lemma~\ref{lemma:CM} shows that $S/(Q_1\cap Q_2\cap Q_3)$ is Cohen-Macaulay.

The verification of the claim reduces immediately to
\[
Q_3\ \subseteq\ (Q_1\cap Q_3)+Q_2,
\]
which, in turn reduces to
\[
I_m(Y|_{a+1})\ \subseteq\ (Q_1\cap Q_3)+Q_2.
\]
Since the ideals on the right contain $J_{a+1}$, it suffices to show that
\[
I_m(Y'|_{a+1})\ \subseteq\ (Q_1\cap Q_3)+Q_2.
\]
But
\[
I_m(Y'|_{a+1})\ =\ \frakP_{2m\times a+1}(Y'|_{a+1})+\frakQ_{2m\times a+1}(Y'|_{a+1}),
\]
and
\[
\frakP_{2m\times a+1}(Y'|_{a+1})\ \subseteq\ (Q_1\cap Q_3),
\quad\text{ while }\quad
\frakQ_{2m\times a+1}(Y'|_{a+1})\ \subseteq\ Q_2.
\]

This concludes the proof that $S/(I_\sigma+J_a)$ is Cohen-Macaulay for $\sigma$ standard, $a=s_k<n$, and $d$ odd. When $d$ is even, the proof that the rings $S/(I'_\sigma+J_a)$ and $S/(I''_\sigma+J_a)$ are Cohen-Macaulay resembles the proof in Case (i); one does not have to separately consider the cases where $k=m-1$. 
\end{proof}

We record the main consequences of Theorem~\ref{theorem:symmetric:prs}:

\begin{theorem}
\label{theorem:symmetric:odd:nullcone}
Let $Y$ be a~$(2t+1)\times n$ matrix of indeterminates over a field $K$ of characteristic other than two. Set~$S\colonequals K[Y]$ and
\[
\frakS\colonequals (Y^\tr Y)S+I_{t+1}(Y).
\]
Then $S/\frakS$ is a Cohen-Macaulay integral domain, with
\[
\dim S/\frakS=\begin{cases}
2nt -\displaystyle{\binom{n}{2}}& \text{if }\ n\le t+1,\\
nt+\displaystyle{\binom{t+1}{2}}& \text{if }\ n\ge t.
\end{cases}
\]
\end{theorem}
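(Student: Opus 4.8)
The plan is to deduce this directly from Theorem~\ref{theorem:symmetric:prs}; essentially all of the work has already been done there, and what remains is to select the correct standard sequence $\sigma$ and to carry out the index bookkeeping. First I would reduce to the case that $K$ is algebraically closed. The ideal $\frakS=(Y^\tr Y)S+I_{t+1}(Y)$ is defined over the prime field, so $\bar K\otimes_K(S/\frakS)$ is the analogous ring $\bar K[Y]/\frakS\bar K[Y]$; since $K\to\bar K$ is faithfully flat, $S/\frakS$ is Cohen-Macaulay once $\bar K[Y]/\frakS\bar K[Y]$ is, and $S/\frakS$ is a domain because it injects into the domain $\bar K[Y]/\frakS\bar K[Y]$ (using that $\frakS\bar K[Y]$ is prime, hence $\frakS\bar K[Y]\cap K[Y]=\frakS$), while the dimension is unchanged. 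The trivial case $t=0$, where $d=1$ and $\frakS=(y_{11},\dots,y_{1n})$ so $S/\frakS=K$, is disposed of at the outset, so I assume $t\ge 1$.

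Next I would exhibit $\frakS$ as $I_\sigma$ for a standard $\sigma$, taking $a\colonequals s_0=0$ (so $J_0=0$). Here $d=2t+1$, hence $\lfloor d/2\rfloor=t$ and $\frakS=(Y^\tr Y)+I_{t+1}(Y)$. If $n\le t$, take $\sigma=(0,1,2,\dots,n)$, so $m=n\le t$: each term $I_{j+1}(Y|_j)$ vanishes since a matrix with $j$ columns has no size $j+1$ minor, and the top term $I_{n+1}(Y|_n)=I_{n+1}(Y)=0$ as $Y$ has only $n$ columns, so $I_\sigma=\frakS$. If $n\ge t$, take $\sigma=(0,1,2,\dots,t-1,n)$, so $m=t$: again $I_{j+1}(Y|_j)=0$ for $0\le j\le t-1$, while the top term is $I_{t+1}(Y|_n)=I_{t+1}(Y)\subseteq\frakS$, so $I_\sigma=\frakS$ once more. (The two choices agree when $n=t$.) In each case $\sigma$ is standard, $a=s_0$ with $k=0$ satisfies $0\le k\le m-1$ since $m\ge 1$, so parts~(1)--(3) of Theorem~\ref{theorem:symmetric:prs} apply and give that $\frakS$ is prime and $S/\frakS$ is a Cohen-Macaulay integral domain of dimension $m(d+n-m-1)-\sum_{j=1}^{m-1}s_j$.

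Finally I would evaluate this dimension. For $n\le t$ we have $m=n$ and $\sum_{j=1}^{m-1}s_j=\binom n2$, giving $n(2t+1+n-n-1)-\binom n2=2nt-\binom n2$. For $n\ge t$ we have $m=t$ and $\sum_{j=1}^{m-1}s_j=\binom t2$, giving $t(2t+1+n-t-1)-\binom t2=t(t+n)-\binom t2=nt+\binom{t+1}2$. The two cases overlap for $n\in\{t,t+1\}$, where a one-line computation (using $\binom t2+\binom{t+1}2=t^2$) shows $2nt-\binom n2=nt+\binom{t+1}2$, so the two displayed formulae are consistent, and the theorem follows. As for the main obstacle: there is none of substance in this deduction—the entire difficulty is packaged into Theorem~\ref{theorem:symmetric:prs} (the principal radical system, the irreducibility via Proposition~\ref{proposition:symmetric:irreducible}, and the interlaced Cohen-Macaulay induction). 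The only points needing care here are the faithfully flat descent to $\bar K$ and the check that the chosen $\sigma$ contributes no minor ideals beyond those already in $\frakS$, both of which are routine.
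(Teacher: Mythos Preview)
Your proposal is correct and follows essentially the same approach as the paper: choose the standard sequence $\sigma=(0,1,\dots,n)$ when $n\le t$ and $\sigma=(0,1,\dots,t-1,n)$ when $n\ge t$, apply Theorem~\ref{theorem:symmetric:prs} with $a=s_0=0$, and evaluate the dimension formula. Your version is in fact slightly more careful than the paper's own proof, since you explicitly justify the passage to an algebraically closed field (Theorem~\ref{theorem:symmetric:prs} is stated only over such a field) and you dispose of the degenerate case $t=0$, where no standard $\sigma$ exists; both points are tacit in the paper.
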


\begin{proof}
If $n\le t$, take $\sigma=(0,1,2,\dots,n-1,n)$ in Theorem~\ref{theorem:symmetric:odd:nullcone}, so $m=n$ and
\[
\dim S/\frakS=n((2t+1)+n-n-1)-(1+2+\dots+(n-1))\ =\ 2nt -\displaystyle{\binom{n}{2}}.
\]
If $n>t$, take $\sigma=(0,1,2,\dots,t-1,n)$, in which case $m=t$, and the theorem gives
\[
\dim S/\frakS=t((2t+1)+n-t-1)-(1+2+\dots+(t-1))\ =\ nt+\binom{t+1}{2}.\qedhere
\]
\end{proof}

In the case of a symmetric bilinear form of even rank, i.e., when the number of rows of~$Y$ is even, we have the following theorem; note that if $n\le t-1$, then $\frakP=\frakS=\frakQ$.

\begin{theorem}
\label{theorem:symmetric:even:nullcone}
Let $Y$ be a~$2t\times n$ matrix of indeterminates over a field $K$ of characteristic other than two. Set~$S\colonequals K[Y]$ and $\frakS\colonequals (Y^\tr Y) + I_{t+1}(Y)$, and let $\frakP$ and $\frakQ$ be as in Definition~\ref{definition:p:q}.

If $n\le t-1$ then $\frakP=\frakS=\frakQ$, and $S/\frakS$ is a Cohen-Macaulay integral domain with
\[
\dim S/\frakS\ =\ 2nt -\displaystyle{\binom{n+1}{2}}.
\]

If $n\ge t$, then $S/\frakP$, $S/\frakQ$, and $S/(\frakP+\frakQ)$ are Cohen-Macaulay integral domains with
\[
\dim S/\frakP\ =\ nt+\displaystyle{\binom{t}{2}}\ =\ \dim S/\frakQ,
\quad\text{ and }\quad
\dim S/(\frakP+\frakQ)\ =\ nt-n-1+\displaystyle{\binom{t+1}{2}}.
\]
\end{theorem}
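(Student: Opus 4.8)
The plan is to read off every assertion from Theorem~\ref{theorem:symmetric:prs} by making judicious choices of the sequence $\sigma$ and taking $a=0$, so that $J_a=(0)$; the content is then entirely bookkeeping, namely identifying the ideals $I_\sigma$, $I'_\sigma$, $I''_\sigma$ that arise for these $\sigma$ with $\frakS$, $\frakP$, $\frakQ$, and $\frakP+\frakQ$. First I would reduce to the case where $K$ is algebraically closed: the formation of the rings $S/\frakP$, $S/\frakQ$, $S/\frakS$ commutes with the faithfully flat base change $K\to\bar K$, so being a Cohen--Macaulay domain and the value of the dimension both descend from $\bar K$ to $K$. After this reduction Theorem~\ref{theorem:symmetric:prs} applies directly.

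For the range $n\ge t$, I would take $\sigma\colonequals(0,1,2,\dots,t-1,n)$, which is standard with $m=t$ since $t-1<t\le n$. For $0\le k\le t-1$ the submatrix $Y|_k$ has only $k$ columns, so $I_{k+1}(Y|_k)=0$, while $I_{t+1}(Y|_n)=I_{t+1}(Y)\subseteq\frakS$; hence $I'_\sigma=\frakP$ and $I''_\sigma=\frakQ$. Applying Theorem~\ref{theorem:symmetric:prs}(3) with $a=s_0=0$ (so $k=0$) shows that $S/\frakP$ and $S/\frakQ$ are Cohen--Macaulay integral domains of dimension $t(2t+n-t-1)-\binom{t}{2}=nt+\binom{t}{2}$, as claimed.

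For $\frakP+\frakQ$ I would first record the identity $\frakP+\frakQ=\frakS+I_t(Y)=(Y^\tr Y)+I_t(Y)$ in $K[Y]$: the containment $\subseteq$ is \eqref{equation:p:q:in:I_t}, while $\supseteq$ follows because $\frakP,\frakQ\supseteq\frakS$ and, for each size $t$ minor of $Y$, the sum of the two corresponding generators of $\frakP$ and $\frakQ$ is twice that minor, with $2$ a unit. Then, for $t\ge 2$, take $\sigma'\colonequals(0,1,2,\dots,t-2,n)$, which is standard with $m=t-1<t$; since $m<t$ one has $I'_{\sigma'}=I''_{\sigma'}=I_{\sigma'}$, and the same vanishing of small minor ideals gives $I_{\sigma'}=\frakS+I_t(Y)=\frakP+\frakQ$. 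Theorem~\ref{theorem:symmetric:prs}(3) with $a=0$ then yields that $S/(\frakP+\frakQ)$ is a Cohen--Macaulay integral domain of dimension $(t-1)(2t+n-(t-1)-1)-\binom{t-1}{2}=nt-n-1+\binom{t+1}{2}$. The degenerate case $t=1$, where $\sigma'$ is inadmissible, is handled by hand: then $\frakP+\frakQ=(Y^\tr Y)+I_1(Y)$ is the homogeneous maximal ideal of $S$, so $S/(\frakP+\frakQ)=K$ is a Cohen--Macaulay domain of dimension $0=nt-n-1+\binom{t+1}{2}$. Finally, for the range $n\le t-1$ there is no genuine splitting: $Y$ has fewer than $t$ columns, so $I_t(Y)=0$ and the generators of $\frakP$ and $\frakQ$ beyond $\frakS$ -- differences of (nonexistent, hence zero) size $t$ minors -- all vanish, whence $\frakP=\frakS=\frakQ$. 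Taking $\sigma\colonequals(0,1,2,\dots,n-1,n)$, which is standard with $m=n<t$, the same argument gives $I'_\sigma=I''_\sigma=I_\sigma=\frakS$, and Theorem~\ref{theorem:symmetric:prs}(3) with $a=0$ shows $S/\frakS$ is a Cohen--Macaulay integral domain of dimension $n(2t+n-n-1)-\binom{n}{2}=2nt-\binom{n+1}{2}$.

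I do not expect a serious obstacle: granted Theorem~\ref{theorem:symmetric:prs}, the proof is a matter of selecting the right $\sigma$ and simplifying. The points that need care are (i) the ideal identifications $I'_\sigma=\frakP$, $I''_\sigma=\frakQ$, $I_{\sigma'}=\frakP+\frakQ$ -- in particular the identity $\frakP+\frakQ=(Y^\tr Y)+I_t(Y)$ and the vanishing $I_{k+1}(Y|_k)=0$ when $Y|_k$ has only $k$ columns; (ii) the descent from $\bar K$ to $K$; and (iii) the boundary cases $t=1$ and $n\le t-1$, where the chosen $\sigma$ fails to be admissible and the conclusion must be replaced or verified directly. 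The arithmetic reconciling $t(2t+n-t-1)-\binom{t}{2}$ with $nt+\binom{t}{2}$, and the analogous identities, is routine.
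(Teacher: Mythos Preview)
Your proposal is correct and follows essentially the same approach as the paper's proof: both deduce every assertion from Theorem~\ref{theorem:symmetric:prs} by choosing $\sigma=(0,1,\dots,t-1,n)$ for $\frakP$ and $\frakQ$, $\sigma'=(0,1,\dots,t-2,n)$ for $\frakP+\frakQ$, and $\sigma=(0,1,\dots,n-1,n)$ in the range $n\le t-1$, together with the identity $\frakP+\frakQ=(Y^\tr Y)+I_t(Y)$. You are slightly more careful than the paper in making explicit the reduction to algebraically closed $K$ (needed since Theorem~\ref{theorem:symmetric:prs} assumes it) and in treating the boundary case $t=1$ directly, but these are refinements of the same argument rather than a different route.
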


\begin{proof}
If $n\le t-1$, take $\sigma=(0,1,2,\dots,n-1,n)$ in Theorem~\ref{theorem:symmetric:prs}, so $m=n$ and
\[
\dim S/\frakS=n(2t+n-n-1)-(1+2+\dots+(n-1))\ =\ 2nt -\displaystyle{\binom{n+1}{2}}.
\]
If $n\ge t$, take $\sigma=(0,1,2,\dots,t-1,n)$, in which case $m=t$, and
\[
\dim S/\frakP=t(2t+n-t-1)-(1+2+\dots+(t-1))\ =\ nt+\binom{t}{2}.
\]
The case of $S/\frakQ$ is similar. Next, note that
\[
\frakP+\frakQ = (Y^\tr Y) + I_t(Y),
\]
and that if $n\ge t$, taking $\sigma=(0,1,2,\dots,t-2,n)$ in Theorem~\ref{theorem:symmetric:prs} gives
\begin{multline*}
\dim S/(\frakP+\frakQ)\ =\ (t-1)(2t+n-(t-1)-1)-(1+2+\dots+(t-2))\\
=\ nt-n-1+\displaystyle{\binom{t+1}{2}},
\end{multline*}
which completes the proof.
\end{proof}

\subsection{The purity of the embedding}

Finally, we are in a position to settle the $\O_d(K)$ case of Theorem~\ref{theorem:main}:

\begin{theorem}
\label{theorem:symmetric:purity}
Let $K$ be a field of positive characteristic $p$. Fix positive integers $d$ and $n$, and consider the inclusion $\phi\colon K[Y^\tr Y]\to K[Y]$ where $Y$ is a size $d\times n$ matrix of indeterminates. Then $\phi$ is pure if and only if
\begin{enumerate}[\quad \rm(1)]
\item $d=1$, or
\item $d=2$ and $p$ is odd, or 
\item $p=2$ and $n\le (d+1)/2$, or
\item $p$ is odd and $n\le (d+2)/2$.
\end{enumerate}
\end{theorem}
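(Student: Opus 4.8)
The plan is to follow the template of Theorems~\ref{theorem:grassmannian:purity}, \ref{theorem:determinantal}, and~\ref{theorem:alternating:purity}. Write $S\colonequals K[Y]$, $R\colonequals K[Y^\tr Y]$, and $\frakA\colonequals\frakm_RS=(Y^\tr Y)S$; by Theorem~\ref{theorem:solid}, purity of $\phi$ implies $H^{\dim R}_{\frakm_R}(S)\ne0$, and the converse holds whenever $R$ is regular, i.e.\ whenever $n\le d$, in which case $\dim R=\binom{n+1}{2}$. Since $H^i_{\frakm_R}(S)=H^i_{\frakA}(S)$ depends only on $\rad\frakA$, and local cohomology commutes with flat base change, neither the nonvanishing of $H^{\dim R}_{\frakm_R}(S)$ nor $\dim R$ changes upon enlarging $K$; we may therefore assume $K$ is algebraically closed and invoke the nullcone descriptions of \S\ref{ssec:sym:odd} and of Theorem~\ref{theorem:symmetric:nullcone:char2}. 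Finally, grading $S$ so that the indeterminates of $Y|_{n'}$ have degree $0$ and the rest degree~$1$ gives $K[Y^\tr Y]_0=K[Y'^\tr Y']$ with $Y'\colonequals Y|_{n'}$, exactly as in the proof of Theorem~\ref{theorem:determinantal}; so purity for $(d,n)$ forces purity for every $(d,n')$ with $n'\le n$, and it suffices to examine one boundary value of $n$ for each $d$ and each parity of~$p$.

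I would first dispatch the cases where $\phi$ is pure. When $d=1$, $R$ is the second Veronese subring of $S=K[y_{11},\dots,y_{1n}]$, hence a direct summand of $S$. When $d=2$ and $p$ is odd, $\O_2(K)$ is linearly reductive---its identity component is a one-dimensional torus and its component group has order $2$, which is prime to $p$---so $R$, being the invariant ring, is a direct summand of $S$. When $n\le(d+1)/2$, Theorem~\ref{theorem:symmetric:ci} with $\fraka=0$ shows $\frakA$ is a complete intersection of height $\binom{n+1}{2}=\dim R$ (note $n\le d$ here), so $H^{\dim R}_{\frakm_R}(S)\ne0$ and $\phi$ is pure by Theorem~\ref{theorem:solid}. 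The one remaining positive case is $p$ odd and $n=(d+2)/2$, which forces $d=2t$ and $n=t+1$ (for $d$ odd it is subsumed above). Here $\rad\frakA=\frakP\cap\frakQ$ with $S/\frakP$, $S/\frakQ$, $S/(\frakP+\frakQ)$ Cohen--Macaulay by Theorem~\ref{theorem:symmetric:even:nullcone}; from the dimension formulas there and Peskine--Szpiro (Theorem~\ref{theorem:peskine:szpiro}) one computes $\cd\frakP=\cd\frakQ=\dim R-1$ and $\cd(\frakP+\frakQ)=\dim R+1$, and the Mayer--Vietoris sequence relating the modules $H^\bullet_{\frakP}(S)$, $H^\bullet_{\frakQ}(S)$, $H^\bullet_{\frakP\cap\frakQ}(S)$, $H^\bullet_{\frakP+\frakQ}(S)$ collapses in degrees $\dim R$ and $\dim R+1$ to yield $H^{\dim R}_{\frakm_R}(S)\cong H^{\dim R+1}_{\frakP+\frakQ}(S)\ne0$.

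For the converse, the reduction step lets me check non-purity only at the minimal failing values: $n=t+2$ when $p=2$ and $d\in\{2t+1,2t+2\}$; $n=t+2$ when $p$ is odd and $d=2t+1$; and $n=t+2$ when $p$ is odd and $d=2t$ with $t\ge2$. In all of these $n\le d$, so $\dim R=\binom{n+1}{2}=\binom{t+3}{2}$. In the first three, $\rad\frakA=\frakS$ is the prime ideal of Theorem~\ref{theorem:symmetric:nullcone:char2} (if $p=2$) or of Theorem~\ref{theorem:symmetric:odd:nullcone} (if $p$ is odd), and $S/\frakS$ is Cohen--Macaulay; the dimension formulas give $\height\frakS=\dim R-1$, so $\cd\frakA=\height\frakS=\dim R-1$ by Peskine--Szpiro, $H^{\dim R}_{\frakm_R}(S)=0$, and $\phi$ is not pure. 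In the last case $\rad\frakA=\frakP\cap\frakQ$ (Corollary~\ref{corollary:symmetric:components}, with $\frakP$, $\frakQ$ prime by Theorem~\ref{theorem:symmetric:even:nullcone}), and Peskine--Szpiro applied to Theorem~\ref{theorem:symmetric:even:nullcone} gives $\cd\frakP=\cd\frakQ=\dim R-3$ and $\cd(\frakP+\frakQ)=\dim R$; the Mayer--Vietoris bound
\[
\cd(\frakP\cap\frakQ)\ \le\ \max\{\cd\frakP,\ \cd\frakQ,\ \cd(\frakP+\frakQ)-1\}\ =\ \dim R-1
\]
forces $H^{\dim R}_{\frakm_R}(S)=0$, so $\phi$ is again not pure. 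A short bookkeeping check then confirms that these boundary cases, propagated by the reduction step, exhaust the complement of conditions~(1)--(4).

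The conceptual content is already in place in \S\ref{ssec:sym:odd} and Theorem~\ref{theorem:symmetric:nullcone:char2}, so I expect the main obstacle to be organizational: keeping the height and cohomological-dimension bookkeeping straight across the sub-cases, and---in the two cases governed by the reducible nullcone $V(\frakP)\cup V(\frakQ)$---arranging the Mayer--Vietoris sequence so that it determines $H^{\dim R}_{\frakm_R}(S)$ on the nose (nonzero when $n=t+1$, zero when $n=t+2$) rather than merely estimating it. A smaller point needing care is the $d=2$, $p$ odd case, where one must justify that $R$ is genuinely a direct summand of $S$ over the given, possibly non-closed, field $K$---that is, that the action of $\O_2(K)$ admits a Reynolds operator.
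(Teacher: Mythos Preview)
Your proposal is correct and follows essentially the same approach as the paper's proof: the same reduction in $n$, the same treatment of the positive cases (Veronese for $d=1$, linear reductivity of $\O_2$ for $d=2$ with $p$ odd, complete intersection via Theorem~\ref{theorem:symmetric:ci} for $n\le(d+1)/2$, and Mayer--Vietoris on $\frakP,\frakQ$ for $d=2t$, $n=t+1$), and the same Peskine--Szpiro/Mayer--Vietoris analysis at the boundary values $n=t+2$ for non-purity. Your numerical bookkeeping ($\cd\frakP=\dim R-1$ versus $\dim R-3$, $\cd(\frakP+\frakQ)=\dim R+1$ versus $\dim R$) is exactly what the paper computes, and your flagged concern about descending purity from $\bar K$ to $K$ in the $d=2$ case is handled in the paper by the same one-line enlargement of $K$.
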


\begin{proof}
As with the other matrix families, if~$\phi\colon K[Y^\tr Y]\to K[Y]$ is pure for fixed $(n,d)$, then purity holds as well for the inclusion of the $K$-algebras corresponding to~$(n',d)$ with~$n'\le n$. Set $S\colonequals K[Y]$ and $R\colonequals K[Y^\tr Y]$, and note that $\frakm_RS=(Y^\tr Y)S$.

When $d=1$, the ring $R$ coincides with the Veronese subring $S^{(2)}$, and is hence a pure subring of $S$.

Next, consider the case where $d=2$ and $p$ is odd. In proving the purity, one may enlarge~$K$ so as to assume that it is algebraically closed. The special orthogonal group~$\SO_2(K)$ is then isomorphic to the torus $K^\times$, so $\O_2(K)$ is the extension of $\ZZ/2$ by a torus, hence linearly reductive; see also~\cite[Remark~8.2]{JS}. It follows that purity holds in case (2).

When $n\le (d+1)/2$, Theorem~\ref{theorem:symmetric:ci} implies that the ideal $\frakm_RS$ is generated by a regular sequence of length~$\binom{n+1}{2}$. Since this is also the dimension of $R$, it follows that $\phi$ is pure.

If $p=2$, suppose first that $d$ is odd, say $d=2t+1$. We need to verify that~$\phi$ is not pure if $n=t+2$. This follows from Theorem~\ref{theorem:symmetric:nullcone:char2} since $S/(\rad\frakm_RS)$ is Cohen-Macaulay and
\begin{equation}
\label{equation:symmetric:d:odd}
\dim R - \height\frakm_RS\ =\ \binom{n+1}{2}-\left[dn-nt-\binom{t+1}{2}\right]\ =\ 1.
\end{equation}
Similarly, when $d=2t+2$, it suffices to verify that $\phi$ is not pure in the case $n=t+2$. Theorem~\ref{theorem:symmetric:nullcone:char2} implies that $S/\!\rad\frakm_RS$ is Cohen-Macaulay, and that
\[
\dim R - \height\frakm_RS\ =\ \binom{n+1}{2}-\left[dn-n(t+1)-\binom{t+1}{2}\right]\ =\ 1,
\]
which completes the case $p=2$; specifically, the argument above is valid in the case $d=2$, where one has $t=0$.

In the remaining cases, $p$ is an odd prime, and $d$ is at least $3$. When $d=2t+1$, we need to check that $\phi$ is not pure in the case $n=t+2$. This is much the same as~\eqref{equation:symmetric:d:odd}, with Theorem~\ref{theorem:symmetric:odd:nullcone} providing the needful.

Suppose $d=2t$ and $t\ge 2$. It suffices to verify that $\phi$ is pure in the case $n=t+1$, and that it is not pure in the case $n=t+2$. In either case, the ring $R$ is regular, with $\dim R=\binom{n+1}{2}$, so the critical local cohomology module is
\[
H^{\binom{n+1}{2}}_{\frakm_R}(S)\ =\ H^{\binom{n+1}{2}}_{\frakP\cap\frakQ}(S).
\]
By Theorem~\ref{theorem:symmetric:even:nullcone}, the ideals $\frakP$, $\frakQ$, and $\frakP+\frakQ$, define Cohen-Macaulay rings, and
\[
\height\frakP = nt-\binom{t}{2} = \height\frakQ
\quad\text{ and }\quad
\height(\frakP + \frakQ) = nt+n+1-\binom{t+1}{2}.
\]
When $n=t+1$, the Mayer-Vietoris sequence
\[
\CD
@>>> H^{\binom{n+1}{2}}_{\frakP\cap\frakQ}(S) @>>> H^{\binom{n+1}{2}+1}_{\frakP+\frakQ}(S) @>>> H^{\binom{n+1}{2}+1}_{\frakP}(S)\oplus H^{\binom{n+1}{2}+1}_{\frakQ}(S) @>>>
\endCD
\]
shows that $H^{\binom{n+1}{2}}_{\frakP\cap\frakQ}(S)$ is nonzero, since the middle term is nonzero and the term to the right vanishes. When $n=t+2$, the vanishing of $H^{\binom{n+1}{2}}_{\frakP\cap\frakQ}(S)$ follows from the vanishing of the outer terms in the exact sequence
\[
\CD
@>>> H^{\binom{n+1}{2}}_{\frakP}(S)\oplus H^{\binom{n+1}{2}}_{\frakQ}(S) @>>> H^{\binom{n+1}{2}}_{\frakP\cap\frakQ}(S) @>>> H^{\binom{n+1}{2}+1}_{\frakP+\frakQ}(S) @>>>.
\endCD\qedhere
\]
\end{proof}

In the case that the field $K$ has characteristic two, it is also reasonable to ask when the inclusion $K[Y^\tr Y,\ \sum_i y_{ij}\ |\ 1\le j\le n] \subseteq K[Y]$ is pure; we record the answer:

\begin{theorem}
\label{theorem:symmetric:purity:char:2}
Let $K$ be a field of characteristic two. Fix positive integers $d$ and $n$, and consider a $d\times n$ matrix of indeterminates $Y$. Then the inclusion
\[
K[Y^\tr Y,\ \sum_i y_{ij}\ |\ 1\le j\le n]\ \subseteq\ K[Y]
\]
is pure if and only if $d=1$ or $n\le (d+1)/2$.
\end{theorem}

\begin{proof}
The ring $R\colonequals K[Y^\tr Y,\ \sum_i y_{ij}\ |\ 1\le j\le n]$ is an integral extension of the symmetric determinantal ring $K[Y^\tr Y]$, and hence has the same dimension as $K[Y^\tr Y]$. Also, when~$K[Y^\tr Y]$ is regular, so is $R$. Set $S\colonequals K[Y]$. By Theorem~\ref{theorem:symmetric:nullcone:char2}, the ideal 
\[
\frakm_RS\ =\ (Y^\tr Y)S+(y_{11}+\dots+y_{d1},\ \dots,\ y_{1n}+\dots+y_{dn})S
\]
defines a Cohen-Macaulay ring $S/\frakm_RS$.

If $d=1$ then $R=S$. Assume $d\ge 2$, and express $d$ as $2t+1$ or $2t+2$, for $t$ an integer. Using the reduction as in the proof of Theorem~\ref{theorem:symmetric:purity}, it suffices to verify that $R\subseteq S$ is pure in the case $n=t+1$, and that it is not pure in the case $n=t+2$. In either case the ring $R$ is regular with $\dim R=\binom{n+1}{2}$, and the critical local cohomology module is $H^{\dim R}_{\frakm_RS}(S)$. Using Theorem~\ref{theorem:symmetric:nullcone:char2}, this module is nonzero in the case $n=t+1$ since $\height\frakm_RS=\binom{n+1}{2}$, whereas, if $n=t+2$, then
\[
\dim R - \height\frakm_RS\ =\ \binom{n+1}{2}-\left[n(t+1)-\binom{t+1}{2}\right]\ =\ 1,
\]
so $H^{\dim R}_{\frakm_RS}(S)=0$.
\end{proof}

\section*{Acknowledgments}

Several of the results were verified using the computer algebra systems \texttt{Macaulay2}~\cite{Macaulay2} and \texttt{Magma}~\cite{Magma}; the use of these is gratefully acknowledged. We are also indebted to the referee for a careful reading of the manuscript, and for helpful comments.


\end{document}